\newcommand{\beq}{\begin{equation}}
\newcommand{\eeq}{\end{equation}}
\newcommand{\ben}{\begin{eqnarray}}
\newcommand{\een}{\end{eqnarray}}
\newcommand{\beno}{\begin{eqnarray*}}
\newcommand{\eeno}{\end{eqnarray*}}
\newtheorem{theorem}{Theorem}[section]
\newtheorem{lemma}[theorem]{Lemma}
\newtheorem{proposition}[theorem]{Proposition}
\newtheorem{Theorem}{Theorem}[section]
\newtheorem{Proposition}[Theorem]{Proposition}
\newtheorem{assumption}[Theorem]{Assumption}
\newtheorem{Lemma}[Theorem]{Lemma}
\newtheorem{Remark}[Theorem]{Remark}
\begin{document}
\title[Quasi-incompressible two-phase flow]
{Local-in-time existence of strong solutions to a quasi-incompressible Cahn--Hilliard--Navier--Stokes system}

%
\author{Mingwen Fei}
\address{School of Mathematics and Statistics, Anhui Normal University, Wuhu 241002, P. R. China}
\email{mwfei@ahnu.edu.cn}
\author{Xiang Fei}
\address{School of Mathematics and Statistics, Anhui Normal University, Wuhu 241002, P. R. China}
\email{feixiang@ahnu.edu.cn}
\author{Daozhi Han}
\address{Department of Mathematics, The State University of New York at Buffalo, Buffalo, NY 14260 USA}
\email{daozhiha@buffalo.edu}
\author{Yadong Liu}
\address{School of Mathematical Sciences, Ministry of Education Key Laboratory of NSLSCS, and Key Laboratory of Jiangsu Provincial Universities of FDMTA, Nanjing Normal University, Nanjing 210023, P. R. China
}
\email{ydliu@njnu.edu.cn}
\date{\today}

\subjclass[2020]{
	35Q35, 
	76D03, 
	76T99,
	35Q30, 
	76D05
}
\keywords{Two-phase flow, Navier--Stokes--Cahn--Hilliard equations, free boundary value problems, diffuse interface model, inhomogeneous Navier-Stokes equation}

\begin{abstract}
We analyze a quasi-incompressible Cahn--Hilliard--Navier--Stokes system (qCHNS) for two-phase flows with unmatched densities. The order parameter is the volume fraction difference of the two fluids, while mass-averaged velocity is adopted. This leads to a quasi-incompressible model where the pressure also enters the equation of the  chemical potential.
We establish local existence and uniqueness of strong solutions by the Banach fixed point theorem and the maximal regularity theory.  
\end{abstract}

\numberwithin{equation}{section}

\indent

\maketitle
\section{Introduction}

In this article we consider the quasi-incompressible Cahn--Hilliard--Navier--Stokes system (qCHNS) for two-phase flows with mismatched densities \cite{GCLLJ,SYW,MGEK,MKID} in a smooth bounded domain $\Omega\subset\mathbb{R}^{3}$:
\begin{subequations}
  \label{model2}
  \begin{align}
    \label{model2-1}
   & \partial_t(\rho  \mathbf{u}) + \mathrm{div}\left (\rho  \mathbf{u}\otimes
     \mathbf{u}\right)=  \mathrm{div}\big(S(\phi,\mathbb{ D}\mathbf{u})\big)-(\nabla p
    +\phi \nabla \mu)-\rho  \mathbf{k}, \quad\text{in $Q_{T}$},\\
    \label{model2-2}
    &\mathrm{div}  \mathbf{u} = \alpha \Delta\mu_p, \quad\text{in $Q_{T}$},\\
    \label{model2-3}
    &\partial_t\phi + \mathrm{div}(\phi  \mathbf{u})=\Delta\mu_p, \quad\text{in $Q_{T}$},\\
    \label{model2-4}
   & \mu=f(\phi)-\Delta \phi, \quad\text{in $Q_{T}$},\\
    \label{model2-5}
    &\mu_p=\mu+\alpha p, \; \rho=\frac{\varepsilon}{2}\phi+1+\frac{\varepsilon}{2}, \quad\text{in $Q_{T}$}.
  \end{align}
  \end{subequations}
Here $Q_{T}=\Omega\times(0,T)$, the matrix $\mathbf{I}$ denotes the identity matrix and the vector $\mathbf{k}$ stands for the (fixed) unit vector pointing upwards. The unknowns $(\phi,\mathbf{u},p): Q_T \to \mathbb{R} \times \mathbb{R}^3 \times \mathbb{R}$ are the order parameter (volume fraction difference), the mass-averaged (barycentric) mixture velocity and the pressure respectively.
The scalar $\mu$ represents the chemical potential, with $f=F'$, where the function $F(\phi)=\frac{1}{4}(1-\phi^{2})^{2}$ is the potential energy for the interface. Note that the constant $ \varepsilon \coloneqq \frac{\rho_2}{\rho_1} - 1 $ measures how large are the variance of the two densities $ \rho_1, \rho_2 > 0 $ for each pure phase. Thus, without loss of generality, it is assumed to be negative ($ \rho_2 < \rho_1 $). By this, the ratio {$ \alpha \coloneqq -\frac{\rho_2 - \rho_1}{\rho_2 + \rho_1} = - \frac{\varepsilon}{2+\varepsilon} $} is positive.
Moreover, the matrix $S(\phi,\mathbb{ D}\mathbf{u})\coloneqq2\eta(\phi) \mathbb{ D}( \mathbf{u})-\frac{2}{3}\eta(\phi) (\mathrm{div}  \mathbf{u}) \mathbf{I}$ denotes the Newtonian stress tensor with the symmetric gradient $\mathbb{D}(\mathbf{u}) \coloneqq \frac12(\nabla  \mathbf{u}+\nabla  \mathbf{u}^T)$, where
$\eta(\phi)=\frac{\nu-1}{2}\phi+\frac{\nu+1}{2}$ is the viscosity of the mixture and
$\nu>0$ stands for the constant viscosity ratio of the two fluids. Note for simplicity we have set 
the rest of the dimensionaless parameters to be unity. 

Following \cite{HA1} we equip this system with the following  initial-boundary conditions
\begin{subequations}\label{BCs}
\begin{align}
 \label{BCs1}
 &\mathbf{n}\cdot\mathbf{u}=0, \quad \text{on $S_{T}$},  \\&
 \label{BCs2}
 \nabla \phi\cdot \mathbf{n}=\nabla \mu_p \cdot \mathbf{n}=(\mathbf{n}\cdot S(\phi,\mathbb{ D}\mathbf{u}))_{\tau}+(a(\phi)\mathbf{u})_{\tau} =0, \quad \text{on $S_{T}$ }, \\&
 \label{BCs3}
 (\mathbf{u}, \phi)|_{t=0}=( \mathbf{u}_0, \phi_0), \quad \text{in } \Omega,
\end{align}
\end{subequations}
where $S_{T} = \partial \Omega \times (0,T)$ and  $a(\cdot):\mathbb{R}\rightarrow[0,\infty)$ denote the friction parameter on the boundary, $\mathbf{n}$ represent the exterior normal at the boundary of
$\Omega$,  $ (\cdot)_\tau = (\mathbf{I} - \mathbf{n} \otimes \mathbf{n}) \cdot $ denotes the tangent projection on $ \partial \Omega $. Note that {the boundary condition \eqref{BCs2}} for the velocity is the Navier-slip boundary condition.
 Eqs. \eqref{model2-2} and \eqref{model2-3} imply
\begin{align}\label{mass}
	\partial_t \rho+\mathrm{div} (\rho \mathbf{u})=0.
\end{align}
Then by the standard formal test procedure for smooth solutions, i.e., multiplying \eqref{model2-1}, \eqref{model2-3}, and \eqref{model2-4} with $ \mathbf{u} $, $ \mu $, and $ \partial_{t} \phi $ respectively together with integrating by parts, the total energy fulfills
\begin{align}\label{ConEnL}
\frac{\mathrm d}{\mathrm dt}E(\mathbf{u}, \phi)&=
-\int_\Omega \eta(\phi)\left( 2
\mathbb{ D}(\mathbf{u}):\mathbb{ D}(\mathbf{u})-\frac{2}{3} (\mathrm{div} \mathbf{u})^2\right)\, \mathrm{d} x \\
& \quad
-\int_\Omega |\nabla \mu_p|^2\, \mathrm{d} x
-\int_{\partial\Omega} a(\phi)|\mathbf{u}_\tau|^{2}  \, \mathrm{d} x,
\end{align}
where the total energy $E$ is defined as
\begin{align*}
  E(\mathbf{u}, \phi)=
  \int_{\Omega}\Big(\frac{1}{2}\rho|\mathbf{u}|^2+\rho  z\Big)\, \mathrm{d} x+
\int_{\Omega} \Big( F(\phi)+\frac{1}{2}|\nabla \phi|^2\Big)\, \mathrm dx,
\end{align*}
with $\nabla z=\mathbf{k}$.


The qCHNS system is a diffuse interface model for two-phase flows of mismatched densities. Formally setting $\alpha=0$, one recovers the celebrated model H for two-phase flows with matched densities, cf. \cite{MDJ, PB}. See \cite{ADGC,GZ} for recent reviews of diffuse interface models for multi-phase flows. There exist several different diffuse interface models for the case of mismatched densities, depending on how the mixture velocity is defined and which order parameter is utilized. {A typical example is the thermodynamically consistent model (with unmatched density) derived  by J. Lowengrub and L. Truskinovsky in \cite{JL}, where the mixture velocity is mass-averaged (conservation of momentum) and the order parameter is the concentration difference. The mass-averaged velocity is no longer divergence-free inside the diffusive interface, hence the model is a quasi-incompressible model. Moreover, the quasi-incompressibility introduces a pressure term in the chemical potential equation, cf.\eqref{model2-5}, which entails a strong coupling between the Navier-Stokes equations and the Cahn-Hilliard equations. Recently,  by using balance laws of the individual constituents and the Coleman--Noll procedure, M. Shokrpour Roudbari et al. \cite{MGEK} derived the qCHNS system \eqref{model2}. The usage of volume fraction difference as the order parameter in \eqref{model2} greatly simplified the quasi-incompressible model, which makes it more amenable for numerical computation. 
	An important observation of M. Shokrpour Roudbari et al. \cite[Remark 2.9 \& 2.10]{MGEK} is that, up to the definition of the mobility and the definition of mass fluxes, the models in \cite{ADGC,SYW,MGEK} are all	 equivalent. A unified derivation and comparison of known diffuse interface
	models from the physical point of view is presented in \cite{MKID}. On the other hand, based on a volume-averaged velocity, H. Abels, H. Garcke and G. Gr\"{u}n \cite{HHG} firstly derived a thermodynamically
consistent diffuse interface model with different densities, termed as the AGG model. Since the mixture velocity is derived from volume averaging, the AGG model is an incompressible model--closely resembling the model H. We also refer the interested readers to \cite{ADGC,FB,HPC,AR,SYW,MKID} for other diffuse interface systems
	modeling two-phase flows with unmatched densities.}

For the AGG model,  the existence of global weak solutions  was established in \cite{HDG} with the logarithmic potential $F$ and a positive mobility $m$, and the case of degenerate mobility was addressed in \cite{HDH}. The local  existence of strong solutions was proved in \cite{HJ} by the Hilbert monotone operator theory and the maximal $ L^p $-regularity theory for fourth-order parabolic equations. Later on, the strong well-posedness and stability result of the model with logarithmic potential
in two-dimensional and three-dimensional cases were established by Giorgini with $ L^2 $-energy method \cite{AG,AG1}. Recently, H. Abels, H. Garcke and A. Giorgini \cite{HHA} gave the regularity properties  and long time behavior of weak solutions. There were lots of extensions of this model in different aspects.
Concerning moving contact line problem, C.G. Gal, M. Grasselli and H. Wu \cite{CMH} proved the existence of global weak solutions of above system with dynamic boundary conditions and logarithmic potential.  In \cite{SF1}, S. Frigeri derived a nonlocal version of above model and proved existence of global dissipative weak solutions under the case of singular
double-well potentials and non degenerate mobilities. S. Frigeri later turned from the study of non degenerate mobility to the physically more relevant situation of degenerate mobility and proved existence
of global weak solutions 
 in \cite{SF2}.  {Recently, the global well-posedness and convergence to equilibrium of the nonlocal model was justified in \cite{GGG}.} In \cite{HD}, weak solutions of a non-Newtonian variant of the AGG model was considered.

The theoretical results on quasi-incompressible diffuse interface models are rather limited. Since the model is quasi-incompressible (compressible), the regularity of pressure is low \cite{HA12}. Note that the pressure enters the chemical potential equation \eqref{model2-5}. {In this case, the pressure does not enjoy any integrability \textit{a priori} and hence one cannot use the standard technique for Cahn--Hilliard equation with elliptic estimates, due to which the logarithmic potential could not be included.} For the Lowengrub--Truskinovsky model, global existence of weak solution is proven in  \cite{HA12} where the energy law yields the {$W^{1,p}, p>d$} estimate of the order parameter.  Local in-time existence of strong solution is established in \cite{HA1}.

The aim of this article is to show local existence of strong solution to the qCHNS model which differs significantly from the Lowengrub-Truskinovsky model. Following the approach in \cite{HA1}, we linearize the system and show that the linear operator is an isomorphism between certain Hilbert spaces, making use of the maximal regularity theory of parabolic systems  and properties of traces of anisotropic fractional Sobolev spaces(cf. \cite{JG}). Then we argue that the nonlinear terms are Lipschitz continuous with a small constants afforded by a small final time $T$. Unlike  the quasi-incompressible model of Lowengrub-Truskinovsky studied by Abels in \cite{HA1}, there is a higher-order term $ \mathrm{div}(\phi \mathbf{u}) $ (precisely $ \phi \mathrm{div} \mathbf{u} $) in presence from the equation of order parameter,  cf. \eqref{div-phi-u}. This higher order term poses substantial difficulty  in current analysis framework. To overcome this challenge, we additionally impose a smallness assumption on the initial data $ \phi_0 $.
The following assumption is maintained throughout.
\begin{assumption}\label{main assumption}
We assume {$\Omega\subset\mathbb{R}^3$ is a smooth bounded domain,} $\eta,a\in C^{2}(\mathbb{R})$, 
$0<\inf_{t\in \mathbb{R}}\eta(t)\leq \eta(s)\leq \sup_{t\in \mathbb{R}} \eta(t)<+\infty$, $0<\inf_{t\in \mathbb{R}} a(t)\leq a(s)\leq \sup_{t\in \mathbb{R}} a(t)<+\infty$.
\end{assumption}

The main result is summarized as follows.
\begin{theorem}\label{main theorem1}
Let $\mathbf{u}_{0}\in H^{1}(\Omega)$, and $\mathbf{\phi}_{0}\in H^{2}(\Omega)$ such that  $|\mathbf{\phi}_{0}|<\varepsilon_{0}$ {\text{ a.e.~in }\ $\Omega$ for a sufficiently small} constant $\varepsilon_{0}$. Then there exists $T_{0}>0$ such that for any $0 < T \leq T_{0}$, the system \eqref{model2} equipped with boundary conditions (\ref{BCs}) admits a unique strong solution $ (\mathbf{u}, \phi) $  fulfilling
	\begin{align*}
		&\mathbf{u}\in H^{1}(0,T; L^{2}(\Omega))\cap L^{2}(0,T; H^{2}(\Omega)),
		\\&\phi\in L^{2}(0,T; H^{3}(\Omega))\cap  H^{1}(0,T; H^{1}(\Omega)),
		\\& |\phi|\leq\frac{1}{2},\  \rho\geq\frac{1}{4}.
	\end{align*}
	Moreover, there is a constant $ C > 0 $ such that
	\begin{align*}
		&\|\mathbf{u}\|_{L^{2}(0,T;H^{2}(\Omega))}
		+\|\partial_{t}\mathbf{u}\|_{L^{2}(0,T; L^{2}(\Omega))}
		+\|\phi\|_{L^{2}(0,T; H^{3}(\Omega))}
		+\|\partial_{t}\phi\|_{L^{2}(0,T; H^{1}(\Omega))}
		\\&\leq C(\|\phi_{0}\|_{ H^{2}(\Omega)}+\|\mathbf u_{0}\|_{ H^{1}(\Omega)}).
\end{align*}
\end{theorem}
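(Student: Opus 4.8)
The plan is to recast the system as a fixed-point problem for a contraction map on a suitable complete metric space of functions with the regularity claimed in the statement. First I would introduce the linear operator $L$ (as hinted at in the introduction by \eqref{main model}) that collects the principal parts: the non-stationary Stokes-type operator with variable viscosity $\eta(\phi)$ and Navier-type slip/friction boundary conditions for $\mathbf{u}$, coupled with the fourth-order parabolic operator governing $\phi$ through $\mu_p = \mu + \alpha p = f(\phi) - \Delta\phi + \alpha p$ and the diffusion equation \eqref{model2-3}. The key subtlety here is that the pressure $p$ appears in the chemical potential, so one cannot decouple the Stokes block from the Cahn--Hilliard block cleanly; instead I would use \eqref{model2-2}, namely $\mathrm{div}\,\mathbf{u} = \alpha\Delta\mu_p$, to express $\Delta\mu_p = \tfrac{1}{\alpha}\mathrm{div}\,\mathbf{u}$ and substitute into \eqref{model2-3}, obtaining \eqref{mass-1}; this turns the $\phi$-equation into a transport-type equation forced by $\mathrm{div}\,\mathbf{u}$, while the genuine fourth-order smoothing comes from the way $\mu_p$ re-enters. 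The linearization would freeze the coefficients $\rho$, $\eta(\phi)$, $a(\phi)$ and the transport velocity at a reference state $(\bar{\mathbf{u}},\bar\phi)$ coming from the initial data, leaving $L$ with constant-in-the-fixed-point-argument structure, and move all genuinely nonlinear terms — the convective term $\mathrm{div}(\rho\mathbf{u}\otimes\mathbf{u})$, the capillary force $\phi\nabla\mu$, the density-dependent gravity, the nonlinearity $f(\phi)$, and the quadratic terms arising from the variable coefficients — to the right-hand side as a source term $\mathcal{N}(\mathbf{u},\phi)$, as in \eqref{main model-00}.

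The second step is the linear theory: I would invoke maximal $L^2$-regularity for the linearized operator $L$ on the time interval $(0,T)$, which gives a bounded inverse $L^{-1}$ from the data space (right-hand sides in $L^2(0,T;L^2)$ for the momentum equation and $L^2(0,T;H^{1})$ or the appropriate negative-order space for the $\phi$-equation, plus compatible initial data $\mathbf{u}_0\in H^1$, $\phi_0\in H^2$) into the solution space
\begin{align*}
\mathbb{E}_T := \big(H^1(0,T;L^2(\Omega))\cap L^2(0,T;H^2(\Omega))\big)\times\big(L^2(0,T;H^3(\Omega))\cap H^1(0,T;H^1(\Omega))\big).
\end{align*}
For the Stokes part with Navier/friction boundary conditions this is classical; for the fourth-order part one uses maximal regularity for the bi-Laplacian-type operator. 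Here the boundary conditions \eqref{BCs2} must be checked to be compatible (Lopatinskii--Shapiro), and the boundary term $(a(\phi)\mathbf{u})_\tau$ must be treated as a lower-order perturbation absorbable for small $T$. The norm of $L^{-1}$ should be controlled uniformly (or at least boundedly) in $T\in(0,T_0]$, which is standard once one works in the space with vanishing initial trace and handles the initial data by a fixed extension.

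The third step is the contraction estimate. Working in a closed ball $\mathbb{B}_R\subset\mathbb{E}_T$ whose radius $R$ is comparable to $\|\mathbf{u}_0\|_{H^1}+\|\phi_0\|_{H^2}$, I would show that the map $\Phi:(\mathbf{u},\phi)\mapsto L^{-1}\big(\mathcal{N}(\mathbf{u},\phi),\text{data}\big)$ maps $\mathbb{B}_R$ into itself and is a contraction, for $T$ small enough. This requires estimating $\mathcal{N}$ in the data norms: the superlinear terms ($\mathrm{div}(\rho\mathbf{u}\otimes\mathbf{u})$, $\phi\nabla\mu$, etc.) are controlled by product estimates and Sobolev/Ladyzhenskaya inequalities in $\mathbb{R}^3$, gaining a small factor $T^\theta$ for some $\theta>0$ from Hölder in time together with the embedding $\mathbb{E}_T\hookrightarrow C([0,T];H^1)\times C([0,T];H^2)$; the term $f(\phi)=\phi^3-\phi$ is handled since $H^2(\Omega)$ is a Banach algebra in dimension three; the variable-coefficient commutators (differences of $\eta(\phi)$, $a(\phi)$, $\rho$ from their frozen values) are small because $\phi-\bar\phi$ is small in $C([0,T];H^2)$ by continuity in time and smallness of $\phi_0$. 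The smallness assumption $|\phi_0|<\varepsilon_0$ plays two roles: it keeps $\alpha$-dependent coefficients and $\rho$ away from degeneracy (giving $|\phi|\le\tfrac12$, $\rho\ge\tfrac14$ a posteriori by the continuity embedding and shrinking $T$), and it keeps the density close to constant so that the $\rho$-weighted time derivative in \eqref{model2-1} behaves like a standard parabolic leading term. Uniqueness follows from the same contraction estimate applied to the difference of two solutions.

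The main obstacle I expect is the correct functional-analytic setup of the \emph{coupled} linear operator $L$ — in particular, identifying the right data space and solution space for the block where the pressure $p$ enters the chemical potential $\mu_p$, and verifying that this coupling does not destroy maximal regularity. Concretely, $p$ is determined (up to constants) by the divergence constraint $\mathrm{div}\,\mathbf{u}=\alpha\Delta\mu_p$ together with the momentum equation, so one must show that eliminating $p$ leaves a well-posed parabolic system for $(\mathbf{u},\phi)$ with the claimed regularity, rather than a system with a genuine loss of derivatives; getting the anisotropic trace spaces right for the boundary conditions \eqref{BCs2} (following \cite{JG}) and checking compatibility of the initial data with them is the delicate bookkeeping. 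Once $L$ is set up and its maximal regularity established with $T$-uniform bounds, the nonlinear estimates and the fixed-point argument are routine applications of Sobolev product inequalities and the small-$T$ gain.
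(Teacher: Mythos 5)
Your overall architecture matches the paper's: eliminate $p$ and $\mu_p$ through the Neumann problem for $\mathrm{div}\,\mathbf{u}$ (leading to \eqref{model1-9}--\eqref{model1-10}), freeze coefficients to define a linear operator as in \eqref{main model}, prove maximal $L^2$-regularity for it, and close by a contraction in the space $X_T$ with smallness coming from short time and from $\varepsilon_0$. However, the step you defer as the ``main obstacle'' is precisely the non-routine core of the proof, and the way you propose to handle it would not work as stated. After your own substitution the $\phi$-equation is first order (transport forced by $\mathrm{div}\,\mathbf{u}$); there is no fourth-order parabolic operator left for $\phi$, so ``maximal regularity for the bi-Laplacian-type operator'' is not available, and the Stokes block and the $\phi$-block cannot be treated separately with the coupling absorbed as a perturbation, because the coupling terms ($\nabla\,\mathrm{div}((\tfrac1\alpha-\phi_0)\nabla\phi')$ in the momentum equation and $\tfrac1\alpha\mathrm{div}\,\mathbf{u}$ in the $\phi$-equation) carry the full order of the system: they are exactly what produces the $L^2(0,T;H^3)$ regularity of $\phi$. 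The paper's Proposition \ref{pjme1} resolves this by a Helmholtz decomposition $\mathbf{u}=\mathbf{w}+\nabla G(g)$, which exhibits a triangular operator matrix whose $(\phi',g)$-block is a \emph{structurally damped} second-order system; analyticity of the semigroup for that block is obtained from the Chen--Triggiani theorem \cite{SPC} (verifying conditions of the type $\varrho_1 A^{1/2}\le B\le\varrho_2 A^{1/2}$ in $H^{-1}_{(0)}$), the gradient part $B_1,B_2$ is shown to be a relatively bounded perturbation, and only then does de Simon's maximal regularity apply. Your proposal contains no substitute for this mechanism, so the claimed bounded inverse $L^{-1}$ with $T$-uniform norm is unproven, and with it the whole fixed-point scheme.

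A second, smaller gap: you assert that all superlinear terms gain a factor $T^\theta$, and you attribute the hypothesis $|\phi_0|<\varepsilon_0$ only to nondegeneracy of $\rho$ and to keeping the density nearly constant. In fact nondegeneracy ($|\phi|\le\tfrac12$, $\rho\ge\tfrac14$) already follows from shortness of time as in Remark \ref{R1}. The real reason for the smallness assumption is the term $\phi\,\mathrm{div}\,\mathbf{u}$ in $\mathrm{div}(\phi\mathbf{u})$: its difference contains $\phi_2\,\mathrm{div}(\mathbf{u}_1-\mathbf{u}_2)$, which must be measured in $L^2(0,T;H^1)$ while $\mathbf{u}_1-\mathbf{u}_2$ has no better time integrability than $L^2(0,T;H^2)$, so no positive power of $T$ can be extracted; the contraction there is bought only by $\|\phi\|_{L^\infty}\lesssim\varepsilon_0$, cf.\ \eqref{div-phi-u}. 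Your blanket H\"older-in-time argument misses this term, so even granting the linear theory your contraction estimate is incomplete without reworking this point.
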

\begin{Remark}\label{R12}
	For the case $d=2$, the same conclusion can be obtained by applying the same method as $d=3$.
\end{Remark}

\begin{Remark}\label{R123}
Here we get $|\mathbf{\phi}|\leq\frac{1}{2}$ for simplicity. In fact the general result can be written as $|\mathbf{\phi}|<C < 1$ and $\rho\geq C_{0}>0$, cf. Remark \ref{R1}.
\end{Remark}


The structure of this article is as follows. In section \ref{sec:preliminaries}, we recall some classical results and preliminary lemmas.  In order to prove the main result, we shall divide the system into the linear part and nonlinear part to apply the Banach fixed point theorem.  Section \ref{sec:proofofThm} is devoted to the proof of the main result Theorem \ref{main theorem1}. The nonlinear part and main theorem  is set up in this section. In section \ref{sec:proofoflinear}, we deal with the linear part by semigroup theory. Finally, the property of the Stokes operator with Navier boundary conditions is given in Appendix \ref{sec:StokesNavier} and some embedding relationships are given in Appendix \ref{sec:Embedding}.

\section{Preliminaries}
\label{sec:preliminaries}
%
\subsection{Notation and Sobolev spaces} For clarity we recall some notations, differential operators and Sobolev spaces in this part.
For $A,B\geq 0$, the notation $A\lesssim B(A \gtrsim B)$ stand for inequalities of the form $A \leq CB(A \geq CB)$ for $C>0$,  $A\approx B$ means $B\lesssim A\lesssim B$. {When the precise value of the constant $C$ or its dependence on other quantities is not required, we simply denote it by C without further specification.}


 For two vectors  $a,b\in\mathbb{R}^{d}$,  $a\otimes b\triangleq(a_{i}b_{j})_{i,j}^{d}\in\mathbb{R}^{d\times d}$. For two $n\times n$ matrices $A=(a_{ij})_{i,j=1}^d, B=(b_{ij})_{i,j=1}^d$,  $A:B\triangleq\sum_{i,j=1}^{d}a_{ij}b_{ij}$ and
{$\nabla\cdot A\triangleq(\sum_{j=1}^{d}\partial_{x_{j}}a_{i,j})_{i=1}^{d}$}.  If $\mathbf{v}\in C^{1}(\Omega)^{d}$, then $\nabla \mathbf{v}=(\partial_{x_{j}}\mathbf{v}_{i})_{i,j=1}^{d}$.

 We define $\partial_{n}=\mathbf{n}\cdot\nabla$, $\nabla_{\tau}=(\mathbf{I}-\mathbf{n}\otimes\mathbf{n})\nabla$ and $\partial_{\tau_{j}}=e_{j}\cdot (\mathbf{I}-\mathbf{n}\otimes\mathbf{n})\nabla$, $j=1,\cdot\cdot\cdot,d$,
where $e_{j}$ denotes the $j-th$ canonical unit vector in $\mathbb{R}^{d}$.

 Given a Banach space $X$, $L^{p}(0,T;X)(1\leq p \leq\infty)$ denotes {the space of
strongly measurable $X$-valued functions that are $p$-integrable (essentially bounded if $p=\infty$),
endowed with the norm:}
\begin{equation*}
 \|f\|_{L^{p}(0,T;X)}\triangleq
 \begin{cases}
        (\int_0^T \|f(\cdot,t)\|^{p}_{X}\,\mathrm dt)^{\frac{1}{p}},&\text{if $1\leq p<\infty$},\\
        \sup_{0\leq t \leq T}\|f(\cdot,t)\|_X,&\text{if $p=\infty$}.
 \end{cases}
 \end{equation*}
 Moreover, we define for a bounded sufficiently smooth domain $\Omega$
\begin{align*}
&H_{n}^{1}(\Omega)=\{\mathbf{u}\in H^{1}(\Omega)^{d}:\mathbf{n}\cdot\mathbf{u}=0\},
\\& W_{p,N}^{2}(\Omega)=\{u\in W_{p}^{2}(\Omega):\mathbf{n}\cdot \nabla u=0\}.
  \end{align*}
The usual Besov spaces on a bounded domain $\Omega$ are denoted by $B_{p,q}^{s}(\Omega)$, where $s\in \mathbb{R}$, $1\leq p,q\leq\infty$. For convenience of later analysis, we recall(cf \cite{HT})
\begin{align}
 & \label{BB1} B_{p,\infty}^{s+\varepsilon}(\Omega)\hookrightarrow B_{p,q_{1}}^{s}(\Omega)\hookrightarrow B_{p,q_{2}}^{s}(\Omega), \varepsilon>0, 1\leq q_{1}\leq q_{2}\leq\infty,
\\& \label{BB2}B_{p_{1},q}^{s_{1}}(\Omega)\hookrightarrow B_{p_{2},q}^{s_{2}}(\Omega), s_{1} \geq s_{2}, s_{1}-\frac{d}{p_{1}}\geq s_{2}-\frac{d}{p_{2}},
\\& \label{BB3}B_{p,1}^{\frac{d}{p}}(\Omega)\hookrightarrow C_{b}^{0}(\Omega).
  \end{align}

 Let $X $ be a metric space and $E$ be a Banach space, we write $BUC(X,E)$  for the Banach space of all
 bounded and uniformly
continuous functions. Given $\rho \in(0,1]$, we define
$$[u]_{\rho}= \underset{x,y\in X, x\neq y}\sup\frac{\parallel u(x)-u(y)\parallel_{E}}{d(x,y)^{\rho}} $$
for $u: X \rightarrow E$, and
$$C^{\rho}\triangleq\{u\big|\|u\|_{C^{\rho}}=\|u\|_{\infty}+[u]_{\rho}<+\infty\}.$$

  The $L^{2}$-Bessel potential spaces are denoted by $H^{s}(\Omega)$,$s\in \mathbb{R}$, which are defined by restriction of distributions in $H^{s}(\mathbb{R}^{d})$ to $\Omega$, cf. \cite{HT}. 
 In the following, $(\cdot, \cdot)_{[\theta]}$
and $(\cdot, \cdot)_{\theta,q}$ will denote the complex and real interpolation function, respectively. In particular, we note that
\begin{align}\label{ae}
(H^{s_{0}}(\Omega),H^{s_{1}}(\Omega))_{[\theta]}=(H^{s_{0}}(\Omega),H^{s_{1}}(\Omega))_{\theta,2}=H^{s}(\Omega)
  \end{align}
for $0<\theta<1$ where $s=(1-\theta)s_{0}+\theta s_{1}$, cf. \cite{JJ}.

Finally, $f\in W^{k,p}(0,T; X)(1\leq p \leq\infty, k\in \mathbb{N}_{0})$, if and only if
$f,\cdot\cdot\cdot,\frac{d^{k}f}{dt^{k}}\in  L^{p}(0,T; X)$, where  denotes the $k$-th $X$-valued distributional derivative of $f$. Furthermore,  we set $H^{1}(0,T; X)=W_{2}^{1}(0,T; X)$ and  $H^{s}(0,T;X)=B_{2,2}^{s}(0,T;X)$ for $s\in(0,1)$ where $f\in B_{2,2}^{s}(0,T;X)$ means
\begin{align}
&\|f\|_{B_{2,2}^{s}(0,T;X)}^{2}=\|f\|_{L^{2}(0,T;X)}^{2}+\|f\|_{\dot{H}^{s}(0,T;X)}^{2}<\infty,\label{frcsobolev}
  \end{align}
  where
  \begin{align}
\|f\|_{\dot{H}^{s}(0,T;X)}^{2}=\|f\|_{\dot{B}_{2,2}^{s}(0,T;X)}^{2}=\int_0^{T}\int_0^{T}\frac{\|f(t)-f(\tau)\|_{X}^{2}}{|t-\tau|^{2s+1}}\mathrm dt \mathrm d\tau.\label{frcsobolev-1}
  \end{align}

By direct calculations, cf. \cite{HA1}, we have
\begin{align}\label{HC}
\|f\|_{H^{s}(0,T;X)}\leq C_{s,s'}T^{\frac{1}{2}}\|f\|_{C^{s'}(0,T;X)},
  \end{align}
for $f\in C^{s'}(0,T;X)$ if $0<s<s'\leq1$. We also need the following estimates\cite{JS}
\begin{align}\label{HH}
\|f\|_{\dot{H}^{r}(0,T;X)}\leq |T|^{s-r}\|f\|_{\dot{H}^{s}(0,T;X)},
  \end{align}
for $r\leq s$ and $f\in \dot{H}^{s}(0,T;X)$.
Moreover, let $X, X_{0}$ and $X_{1}$ be Banach spaces which satisfy  $\|f\|_{X}\leq \|f\|_{X_{0}}^{1-\theta}\|f\|_{X_{1}}^{\theta}$ for $f\in X_{0}\cap X_{1}$, then there holds the following useful embedding\cite{HA1}
\begin{align}\label{BCC}
BUC([0,T]; X_{1})\cap C^{s}([0,T]; X_{0})\hookrightarrow C^{s(1-\theta)}([0,T]; X), 0<s,\theta<1.
  \end{align}

\subsection{Weak Neumann Laplace equation}
This part is adapted from \cite{HA1}.
Define  the orthogonal projection onto $L^{2}_{(0)}(\Omega)=\{f\in L^{2}(\Omega):\bar{f}\triangleq\frac{1}{|\Omega|}\int\limits_\Omega f(x)\,\mathrm dx=0\}$ by
\begin{align*}
P_{0}f\triangleq f-\bar{f}.
  \end{align*}
Then we define  a Hilbert space $H_{(0)}^{1}(\Omega)$ by
\begin{align*}
H_{(0)}^{1}(\Omega)=H^{1}(\Omega)\cap L^{2}_{(0)}(\Omega)
  \end{align*}
  with the following inner product
 \begin{align*}
(c,d)_{H_{(0)}^{1}(\Omega)}=(\nabla c,\nabla d)_{L^{2}(\Omega)}.
  \end{align*}


Moreover, the weak Neumann-Laplace operator $\Delta_{N}:H_{(0)}^{1}(\Omega)\rightarrow H_{(0)}^{-1}(\Omega)$ is defined by
\begin{align*}
-\langle\Delta_{N}u,\varphi\rangle_{H_{(0)}^{-1}(\Omega),H_{(0)}^{1}(\Omega)}=(\nabla u,\nabla \varphi)_{L^{2}(\Omega)}, \quad\text{for $\varphi\in H_{(0)}^{1}(\Omega)$},
  \end{align*}
  where $H_{(0)}^{-1}(\Omega)=H_{(0)}^{1}(\Omega)'$. It follows from Lax--Milgram lemma that for every $f\in H_{(0)}^{-1}(\Omega)$  there is a unique $u\in H_{(0)}^{1}(\Omega)$
such that $\Delta_{N}u=f$ . More precisely, $-\Delta_{N}$ coincides with the Riesz isomorphism $\mathbf{R}:H_{(0)}^{1}(\Omega)\rightarrow H_{(0)}^{-1}(\Omega)$ given by
\begin{align*}
\langle\mathbf{R}c,d\rangle_{H_{(0)}^{-1}(\Omega),H_{(0)}^{1}(\Omega)}=( c, d)_{H_{(0)}^{1}(\Omega)}=(\nabla c,\nabla d)_{L^{2}(\Omega)}, \quad\text{for $c,d\in H_{(0)}^{1}(\Omega)$}.
  \end{align*}

We equip $H_{(0)}^{-1}(\Omega)$ with the inner product
\begin{align}
\langle f,g \rangle_{H_{(0)}^{-1}((\Omega))}=(\nabla \Delta_{N}^{-1}f,\nabla \Delta_{N}^{-1}g)_{L^{2}(\Omega)}=( \Delta_{N}^{-1}f,\Delta_{N}^{-1}g)_{H_{(0)}^{1}(\Omega)}.
  \end{align}
And we embed $L^{2}_{(0)}(\Omega)$ into $H_{(0)}^{-1}(\Omega)$ by defining
\begin{align*}
\langle c,\varphi \rangle_{H_{(0)}^{-1}(\Omega),H_{(0)}^{1}(\Omega)}=\int\limits_\Omega c(x)\varphi(x)\,\mathrm dx, \quad\text{for $\varphi\in H_{(0)}^{1}(\Omega),c\in L^{2}_{(0)}(\Omega)$}.
  \end{align*}
  Then one has
  \begin{align}\label{HLH}
\langle -\Delta_{N} f,g \rangle_{H_{(0)}^{-1}(\Omega)}=(f,g)_{L^{2}(\Omega)},  \quad\text{for $f\in H_{(0)}^{1}(\Omega), g\in L^{2}_{(0)}(\Omega)$}
  \end{align}
  and
  \begin{align}\label{HHH}
\|f\|_{L^{2}(\Omega)}^{2}\leq \|f\|_{H_{(0)}^{1}(\Omega)} \|f\|_{H_{(0)}^{-1}(\Omega)},\quad\text{for $f\in H_{(0)}^{1}(\Omega)$}.
  \end{align}

 Next, we give some  elliptic estimations which will be useful in following sections. Assume $u$ is a solution to the equation
\begin{align}
\Delta_{N}u=f,\ \ f\in H_{(0)}^{-1}(\Omega),\label{NP equation-0}
  \end{align}
  we have
\begin{align*}
\|u\|_{H_{(0)}^{1}(\Omega)}\leq  \|f\|_{H_{(0)}^{-1}(\Omega)}.
  \end{align*}
Furthermore, if $u$ solves
 \begin{align}
\Delta_{N}u=f,\ \ f\in L_{(0)}^{q}(\Omega)(1<q<\infty),\label{NP equation-1}
  \end{align}
  we arrive at $u\in W_{q}^{2}(\Omega)$ and
  \begin{align*}
\Delta u=f,\ \text{a.e.} \ \text{in}\ \Omega,\ {\partial_{n}u=0, \ \text{a.e.} \ \text{on}\ \partial \Omega.}
  \end{align*}
  Moreover, there holds
\begin{align*}
\|u\|_{W_{q}^{k+2}(\Omega)}\leq  C_{q}\|f\|_{W_{q}^{k}(\Omega)},\ f\in W_{q}^{k}(\Omega)\cap L_{(0)}^{q}(\Omega),\ k=0,1,
  \end{align*}
where $C_{q}$ depends only on $q$, $d$ and $\Omega$.

Finally, we further obtain $\Delta_{N}u=\mathrm{div}_{n}\nabla u$ for $u\in  H_{(0)}^{1}(\Omega)$, here
$\mathrm{div}_{n}:L^{2}(\Omega)\rightarrow H_{(0)}^{-1}(\Omega)$ is defined by
\begin{align}
\langle\mathrm{div}_{n} f,\varphi\rangle_{H_{(0)}^{-1}(\Omega),H_{(0)}^{1}(\Omega)}=-(f,\nabla\varphi)_{L^{2}(\Omega)}, \quad\text{for  $f\in L^{2}(\Omega), \varphi\in H_{(0)}^{1}(\Omega)$}.
  \end{align}


\subsection{The Helmholtz Decomposition}
\label{sec:helmholtz}
Let $1<p<\infty$ and $\Omega$ be a bounded sufficiently smooth domain. We say that the Helmholtz decomposition for $L^{p}(\Omega)$ exists whenever $L^{p}(\Omega)$ can be decomposed into
\begin{align*}
L^{p}(\Omega)=L_{\sigma}^{p}(\Omega)\oplus G_{p}(\Omega),
  \end{align*}
where
\begin{align*}
 &G_{p}(\Omega):=\{ \mathbf{u}\in L^{p}(\Omega):\mathbf{u}=\nabla\pi  \:\ \text{for some} \:\  \pi\in W_{loc}^{1,p}(\Omega) \},
  \end{align*}
and $L_{\sigma}^{p}(\Omega)$ is the closure of $\{\mathbf{u}\in C_{0}^{\infty}(\Omega)^{d}:\mathrm{div}\mathbf{u}=0\}$. The operator $P_{\sigma}:L^{p}(\Omega)\rightarrow L_{\sigma}^{p}(\Omega)$ having $G_{p}(\Omega)$ as its null space is called the Helmholtz projection. The existence of the Helmholtz projection for $L^{p}(\Omega)$ is very strongly linked with the following weak Neumann problem: given $\mathbf{f}\in L^{p}(\Omega)$, find $\pi\in W^{1,p}(\Omega)$ satisfying
\begin{align*}
 &\int\limits_\Omega(\nabla\pi-\mathbf{f})\cdot \nabla\varphi\,\mathrm dx=0, \varphi\in  W_{(0)}^{1,p}(\Omega).
  \end{align*}
For more details, one refers to \cite{HS,MJY}.

\subsection{Auxiliary Results} {We report} the following two lemmas which comes from \cite{HA1}.
\begin{lemma}\label{jme3}
Let $\Omega\subset\mathbb{R}^{d}$ be a bounded domain with $C^{k}$-boundary, $k\geq2$, $d=2,3$. Then there exist a first order tangential differential operator $A=\sum_{j=1}^{d}a_{j}(x)\partial_{\tau_{j}}$, $a_{j}\in C^{k-1}(\partial\Omega)$,such that
\begin{align*}
&(\mathbf{n}\cdot \nabla^{2}u)_{\tau}|_{\partial\Omega}=\nabla_{\tau}\gamma_{1}u+A\gamma_{0}u,
  \end{align*}
where {$\gamma_{j}u=\partial_{n}^{j}u$ on $\partial \Omega$} and $u\in H^{2}(\Omega)$.
\end{lemma}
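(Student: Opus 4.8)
The plan is to prove Lemma~\ref{jme3} (the final statement in the excerpt) by localization and flattening of the boundary, reducing to a computation in a half-space where the tangential derivatives of the trace can be written explicitly. First I would fix $x_0\in\partial\Omega$ and choose a $C^k$ local chart that straightens the boundary near $x_0$: a neighborhood $U$ and a $C^k$-diffeomorphism $\Psi$ mapping $U\cap\Omega$ onto an open subset of the half-space $\{y_d>0\}$ and $U\cap\partial\Omega$ onto $\{y_d=0\}$, arranged so that $\mathbf{n}$ at $x_0$ is $-e_d$. Pulling back $u\in H^2(\Omega)$ via $\Psi$ gives a function $v=u\circ\Psi^{-1}\in H^2$ on the half-space, and the point is to track how the normal and tangential derivatives transform. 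The Jacobian of $\Psi$ is $C^{k-1}$, so all the coefficients that appear in the transformed differential operators are $C^{k-1}$ on $\partial\Omega$, which is exactly the regularity asserted for the $a_j$.

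The key identity to establish, in the flat model, is that for a function $v$ of class $H^2$ near $\{y_d=0\}$, the tangential component of $\mathbf{n}\cdot\nabla^2 v$ on the boundary involves $\partial_{y_d}\partial_{y_j}v$ for $j<d$, and $\partial_{y_d}\partial_{y_j}v = \partial_{y_j}(\partial_{y_d}v)$; since $\partial_{y_d}v|_{y_d=0}$ is (up to sign and chart factors) $\gamma_1 u$, a tangential derivative in $y_j$ of the trace produces $\nabla_\tau\gamma_1 u$ plus lower-order terms. The remaining terms, which come from differentiating the chart coefficients and from the curvature of $\partial\Omega$, are first-order tangential derivatives applied to $v|_{y_d=0}=\gamma_0 u$, hence of the form $A\gamma_0 u$ with $A=\sum_j a_j\partial_{\tau_j}$, $a_j\in C^{k-1}(\partial\Omega)$. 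I would carry this out by writing $\mathbf{n}=\sum_i n_i e_i$ and $\nabla^2 u = (\partial_i\partial_j u)$ in the original coordinates, expressing $\partial_i\partial_j u$ in terms of the transformed second derivatives via the chain rule (which brings in first derivatives of $v$ times second derivatives of $\Psi$ — these are $C^{k-2}$, but they multiply first-order traces so they land in the $A\gamma_0 u$ bucket once combined with a tangential projection argument; one should be slightly careful and note that $\gamma_1$ already absorbs normal derivative data, so only genuinely tangential combinations survive), and then applying the tangential projection $(\cdot)_\tau = (\mathbf{I}-\mathbf{n}\otimes\mathbf{n})$.

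Finally I would patch the local representations together with a $C^k$ partition of unity $\{\chi_\ell\}$ subordinate to a finite cover of $\partial\Omega$: on each patch we get an operator $A_\ell$ with $C^{k-1}$ coefficients and the identity $(\mathbf{n}\cdot\nabla^2 u)_\tau = \nabla_\tau\gamma_1 u + A_\ell\gamma_0 u$ modulo the cutoff, and summing $\sum_\ell \chi_\ell(\nabla_\tau\gamma_1 u + A_\ell\gamma_0 u)$ reconstructs $(\mathbf{n}\cdot\nabla^2 u)_\tau$ globally; the $\nabla_\tau\gamma_1 u$ pieces add up to $\nabla_\tau\gamma_1 u$ since $\sum_\ell\chi_\ell\equiv 1$ on $\partial\Omega$, and the derivatives of $\chi_\ell$ hitting $\gamma_1 u$ or $\gamma_0 u$ produce only first-order tangential terms, so they too get absorbed into the global $A=\sum_j a_j\partial_{\tau_j}$. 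I expect the main obstacle to be bookkeeping of regularity in the chain-rule step: one must verify that every term other than $\nabla_\tau\gamma_1 u$ really is only a \emph{first-order tangential} operator applied to $\gamma_0 u$ (not involving $\gamma_1 u$ or second tangential derivatives), and that the coefficients, after all the compositions with the $C^k$ chart and its $C^{k-1}$ Jacobian, retain $C^{k-1}$ regularity — this is where a careless accounting could spuriously lose a derivative. A clean way to handle this is to argue directly with the intrinsic covariant Hessian on $\partial\Omega$ and the second fundamental form, noting $\nabla^2 u$ restricted to $T_x\partial\Omega\times\{\mathbf n\}$ equals $\nabla_\tau(\partial_n u)$ minus a shape-operator term acting on $\nabla_\tau u$, which makes the $C^{k-1}$ dependence of the coefficients (entries of the Weingarten map) manifest.
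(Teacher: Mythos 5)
The paper does not actually prove this lemma: it is quoted verbatim from Abels \cite{HA1}, so there is no in-paper argument to compare with. Judged on its own, your sketch is essentially the standard proof (and the one behind the cited source), and your closing ``clean way'' is really the whole proof: extend $\mathbf{n}$ to a $C^{k-1}$ field near $\partial\Omega$ (e.g.\ $\mathbf{n}=\nabla d$ with $d$ the signed distance), note $\partial_{\tau_i}(\partial_n u)=n_j\partial_{\tau_i}\partial_j u+(\partial_{\tau_i}n_j)\partial_j u$, identify $n_j\partial_{\tau_i}\partial_j u$ with the $i$-th component of $(\mathbf{n}\cdot\nabla^2u)_\tau$, and move the Weingarten term to the other side. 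I would promote that computation from an afterthought to the argument itself, since the chart-flattening route only adds bookkeeping.

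Two points in your write-up need tightening. First, the decisive step is showing that the remainder contains no $\gamma_1 u$; your chart-based discussion only asserts that ``only genuinely tangential combinations survive.'' The explicit reason is the decomposition $\nabla u|_{\partial\Omega}=\nabla_\tau\gamma_0u+\mathbf{n}\,\gamma_1u$ together with the cancellation $(\partial_{\tau_i}n_j)n_j=\tfrac12\partial_{\tau_i}|\mathbf{n}|^2=0$, which kills the $\gamma_1u$ contribution and leaves exactly a first-order tangential operator acting on $\gamma_0 u$; without this identity the claim is not justified. Second, the partition-of-unity step as described is off: since the local identities hold pointwise on each boundary patch, you multiply them by $\chi_\ell$ and sum, setting $A=\sum_\ell\chi_\ell A_\ell$; no derivatives fall on the cutoffs, and if terms like $(\partial_{\tau_j}\chi_\ell)\gamma_1u$ did appear they could not be absorbed into $A\gamma_0u$, so the ``absorption'' you invoke would fail rather than help. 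Finally, on regularity: for a $C^k$ boundary the normal is $C^{k-1}$ and the Weingarten (shape-operator) entries are a priori only $C^{k-2}$, so your claim that this makes the $C^{k-1}$ smoothness of the $a_j$ ``manifest'' overstates it by one derivative; for the smooth domains used in this paper the distinction is harmless, since only boundedness and continuity of the coefficients enter the estimates.
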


\begin{lemma}\label{jme4}
Let $\Omega\subset\mathbb{R}^{d}$ be a bounded domain with $C^{2}$-boundary, for $0<T\leq\infty$, $\nu\in C^{1}(\mathbb{R})$ with $\inf_{s\in\mathbb{R}}\nu(s)>0$, $\phi_0\in H^{2}(\Omega)$, $d=2,3$.

1. There is a bounded linear operator $E:H^{\frac{1}{2}}(\partial\Omega)^{d}\rightarrow H^{2}(\Omega)^{d}$ such that
\begin{align*}
&(\mathbf{n}\cdot \mathbb{ D}E\mathbf{a})_{\tau}|_{\partial\Omega}=\mathbf{a}_{\tau}, E\mathbf{a}|_{\partial\Omega}=0, \mathrm{div} E\mathbf{a}=0
  \end{align*}
for all  $\mathbf{a}\in H^{\frac{1}{2}}(\partial\Omega)^{d}$. Moreover,there is a constant $C>0$ such that $\|E\mathbf{a}\|_{H^1 (\Omega)}\leq C\|\mathbf{a}\|_{H^{-\frac{1}{2}}(\partial\Omega) }$ for all $\mathbf{a}\in H^{\frac{1}{2}}(\partial\Omega)^{d} $.

2. There is a bounded linear operator $E_{T} :H^{\frac{1}{4},\frac{1}{2}}(S_{T})^{d}\rightarrow L^{2}(0,T;H^2 (\Omega))^d \cap H^{1}(0,T;L^2 _{\sigma} (\Omega))^{d}$
such that
\begin{align*}
&(\mathbf{n}\cdot 2\nu(\phi_0)\mathbb{ D}E_{T}\mathbf{a})_{\tau}|_{\partial\Omega}=\mathbf{a}_{\tau}, E_{T}\mathbf{a}|_{\partial\Omega}=0, \mathrm{div} (E_{T}\mathbf{a})=0, E_{T}\mathbf{a}|_{t=0}=0,
  \end{align*}
for all $\mathbf{a}\in H^{\frac{1}{4},\frac{1}{2}}(S_{T})^{d}$. Moreover the operator norm of $E_{T}$ can be estimated independent of $0<T\leq\infty$.

\end{lemma}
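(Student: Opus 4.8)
The plan is to construct $E$ and $E_T$ explicitly as a trace lifting followed by a divergence correction, exploiting the simple structure of $\mathbb{D}$ on fields that vanish on $\partial\Omega$. The starting observation is that if $\mathbf{v}\in H^2(\Omega)^d$ satisfies $\mathbf{v}|_{\partial\Omega}=0$, then all tangential derivatives of $\mathbf{v}$ vanish on the boundary, so $\nabla\mathbf{v}=(\partial_n\mathbf{v})\otimes\mathbf{n}$ there and hence
\[
(\mathbf{n}\cdot\mathbb{D}\mathbf{v})_{\tau}=\tfrac12(\partial_{n}\mathbf{v})_{\tau}\quad\text{on }\partial\Omega .
\]
Thus, once $E\mathbf{a}|_{\partial\Omega}=0$ is imposed, the condition $(\mathbf{n}\cdot\mathbb{D}E\mathbf{a})_{\tau}=\mathbf{a}_{\tau}$ reduces to prescribing the tangential part of the first normal derivative, $(\partial_{n}E\mathbf{a})_{\tau}=2\mathbf{a}_{\tau}$. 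Since $\mathbf{a}_{\tau}$ is tangential, it is natural to ask for the full normal derivative $\gamma_1 E\mathbf{a}=2\mathbf{a}_{\tau}$, which forces $\mathbf{n}\cdot\partial_n E\mathbf{a}=0$ and is precisely what makes $\mathrm{div}\,E\mathbf{a}$ vanish on $\partial\Omega$.

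\textbf{Part 1.} First I would apply the inverse trace theorem to produce $\mathbf{w}\in H^{2}(\Omega)^d$ with $\gamma_0\mathbf{w}=0$ and $\gamma_1\mathbf{w}=2\mathbf{a}_{\tau}$; the lifting is bounded on the Sobolev scale, giving simultaneously $\|\mathbf{w}\|_{H^2}\lesssim\|\mathbf{a}\|_{H^{1/2}(\partial\Omega)}$ and $\|\mathbf{w}\|_{H^1}\lesssim\|\mathbf{a}\|_{H^{-1/2}(\partial\Omega)}$. Then $g\coloneqq\mathrm{div}\,\mathbf{w}\in H^1(\Omega)$ has zero boundary trace (because $\gamma_0\mathbf w=0$ and $\gamma_1\mathbf w$ is tangential) and mean zero, so the Bogov\-ski\u{\i} operator $\mathcal{B}$ yields $\mathbf{z}=\mathcal{B}g\in H^2_0(\Omega)^d$ with $\mathrm{div}\,\mathbf{z}=g$ and $\gamma_0\mathbf{z}=\gamma_1\mathbf{z}=0$, again bounded on the same scale. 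Setting $E\mathbf{a}\coloneqq\mathbf{w}-\mathbf{z}$ gives $\mathrm{div}\,E\mathbf{a}=0$, $E\mathbf{a}|_{\partial\Omega}=0$ and $\gamma_1 E\mathbf{a}=2\mathbf{a}_{\tau}$, so the boundary identity delivers $(\mathbf{n}\cdot\mathbb{D}E\mathbf{a})_{\tau}=\mathbf{a}_{\tau}$; both estimates, including the claimed $\|E\mathbf{a}\|_{H^1}\le C\|\mathbf{a}\|_{H^{-1/2}(\partial\Omega)}$, follow from the scale-boundedness of the two building blocks.

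\textbf{Part 2.} The time-dependent operator is the parabolic analogue in the maximal-regularity class $X_T\coloneqq L^2(0,T;H^2(\Omega))\cap H^1(0,T;L^2(\Omega))$, whose lateral normal-derivative trace space is exactly $H^{1/4,1/2}(S_T)$. The variable coefficient is harmless: since $\phi_0\in H^2(\Omega)\hookrightarrow C(\overline{\Omega})$ and $\nu\in C^1$ with $\inf\nu>0$, the time-independent functions $\nu(\phi_0)|_{\partial\Omega}$ and its reciprocal act as bounded positive multipliers on $H^{1/4,1/2}(S_T)$, so $(\mathbf{n}\cdot 2\nu(\phi_0)\mathbb{D}\mathbf{v})_{\tau}=\nu(\phi_0)(\partial_n\mathbf{v})_{\tau}$ reduces the datum $\mathbf{a}$ to $\tilde{\mathbf{a}}\coloneqq\nu(\phi_0)|_{\partial\Omega}^{-1}\mathbf{a}_\tau\in H^{1/4,1/2}(S_T)$. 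I would then lift $\tilde{\mathbf{a}}$ to $\mathbf{W}\in X_T$ with $\gamma_0\mathbf{W}=0$, $\gamma_1\mathbf{W}=2\tilde{\mathbf{a}}$ and $\mathbf{W}|_{t=0}=0$, and correct with the Bogov\-ski\u{\i} operator applied pointwise in time, $\mathbf{Z}(t)=\mathcal{B}(\mathrm{div}\,\mathbf{W}(t))$; since $\mathcal{B}$ is a fixed spatial operator it commutes with $\partial_t$, maps $X_T$ into itself, and preserves $\mathbf{Z}|_{t=0}=0$. Then $E_T\mathbf{a}\coloneqq\mathbf{W}-\mathbf{Z}$ is solenoidal for every $t$, vanishes on $\partial\Omega$ and at $t=0$, and realizes the prescribed Navier trace; being divergence-free with vanishing normal trace it lies in $H^1(0,T;L^2_\sigma(\Omega))^d$.

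\textbf{The main obstacle} is the $T$-independence of the operator norm. As $\mathcal{B}$ acts only in space its contribution is automatically $T$-uniform, so everything hinges on an anisotropic lift $\tilde{\mathbf{a}}\mapsto\mathbf{W}$ with $T$-independent bound and $\mathbf{W}|_{t=0}=0$. The device, as in \cite{HA1}, is that the temporal order $1/4$ is strictly below $1/2$, whence extension by zero to $t<0$ is bounded on $H^{1/4,1/2}$; one lifts the zero-extended datum on the half-line (equivalently on $\mathbb{R}$, via a parabolic solution operator and the half-line maximal-regularity estimate, whose constants do not depend on the length of the interval) and restricts to $(0,T)$. The zero-extension guarantees that the lift vanishes for $t\le 0$, hence $\mathbf{W}|_{t=0}=0$, while the half-line estimate furnishes the $T$-uniform norm. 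Checking that this lift can be arranged to respect $\gamma_0=0$ and to reproduce the tangential normal-derivative trace, while keeping the divergence correction within the class, is the technical heart of the argument; the required half-line trace and Bogov\-ski\u{\i} mapping properties, together with the Stokes/Navier machinery of Appendix \ref{sec:StokesNavier}, supply the remaining pieces.
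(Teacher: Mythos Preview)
The paper does not prove this lemma at all: it is quoted verbatim from \cite{HA1} (Abels, SIAM J.\ Math.\ Anal.\ 44 (2012), 316--340), as the citation in the lemma heading indicates, and no argument is given here. So there is no ``paper's own proof'' to compare against.

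That said, your proposal is precisely the standard construction and matches what is done in \cite{HA1}: the reduction $(\mathbf{n}\cdot\mathbb{D}\mathbf{v})_\tau=\tfrac12(\partial_n\mathbf{v})_\tau$ for $\mathbf{v}|_{\partial\Omega}=0$, followed by an inverse-trace lift with prescribed $(\gamma_0,\gamma_1)$ and a Bogovski\u{\i} correction to enforce $\mathrm{div}=0$; for Part~2 the anisotropic trace theory plus extension by zero in time (legitimate since the temporal order $1/4<1/2$) to obtain the $T$-independent lift with vanishing initial value. The only point to be slightly careful about is the multiplier claim for $\nu(\phi_0)|_{\partial\Omega}^{-1}$ on $H^{1/4,1/2}(S_T)$: mere continuity of $\nu(\phi_0)$ is not enough for the $H^{1/2}(\partial\Omega)$-component, but since $\phi_0\in H^2(\Omega)$ one has $\nu(\phi_0)|_{\partial\Omega}\in H^{3/2}(\partial\Omega)\hookrightarrow B^1_{2,1}(\partial\Omega)$ in $d=2,3$, which is a multiplier on $H^{1/2}(\partial\Omega)$ (cf.\ \eqref{sobolev em-02}), and the coefficient is time-independent so the $H^{1/4}$-in-time part is trivial. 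With that refinement your sketch is correct.
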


We also recall the property of analytic semigroup corresponding to abstract differential system.
\begin{theorem}[\cite{HA1}]\label{hme}
Let $A: D(A) \subset H \rightarrow H$ be a generator of a bounded analytic semigroup on a Hilbert space $H$ and $1 < q < \infty$. Then for every $f\in L^{q}(0,\infty;H)$  and $u_{0}\in (H,D(A))_{1-\frac{1}{q},q}$ there is a unique $u: [0, \infty) \rightarrow H$ such that $\frac{du}{dt}, Au(t)\in L^{q}(0,\infty;H)$ solving
\begin{align*}
&\frac{du}{dt}+Au(t)=f(t),t>0,
\\&u(0)=u_{0}.
  \end{align*}
Moreover, there is a constant $C_{q} > 0$ independent of $f$ and $u_{0}$ such that
\begin{align*}
&\left\|\frac{du}{dt}\right\|_{L^{q}(0,\infty;H)}+\|Au\|_{L^{q}(0,\infty;H)}\leq C_{q}(\|f\|_{L^{q}(0,\infty;H)}+\|u_{0}\|_{(H,D(A))_{1-\frac{1}{q},q}}).
  \end{align*}
\end{theorem}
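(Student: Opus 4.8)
The plan is to prove this abstract maximal $L^q$-regularity statement by the standard reduction to a homogeneous and an inhomogeneous problem, followed by a Fourier-multiplier argument that exploits the Hilbert space structure. I would write the solution as $u=v+w$, where $v(t)=e^{-tA}u_{0}$ solves the homogeneous problem $\dot v+Av=0$, $v(0)=u_{0}$, and $w$ solves $\dot w+Aw=f$, $w(0)=0$. For the homogeneous part, the classical trace characterization of the real interpolation space (the $K$-method applied to the pair $(H,D(A))$ together with the analyticity of the semigroup) gives the norm equivalence
\[
\|Ae^{-tA}u_{0}\|_{L^{q}(0,\infty;H)}+\|u_{0}\|_{H}\approx\|u_{0}\|_{(H,D(A))_{1-\frac1q,q}},
\]
so that $\dot v=-Av\in L^{q}(0,\infty;H)$ exactly when $u_{0}\in(H,D(A))_{1-\frac1q,q}$, with the required bound. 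Uniqueness of $u$ follows since any solution with vanishing data and vanishing right-hand side is zero by uniqueness for the Cauchy problem of the generator.

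It remains to treat $w$, for which the variation-of-constants (Duhamel) formula gives the mild solution $w(t)=\int_{0}^{t}e^{-(t-s)A}f(s)\,\mathrm ds$, and the estimate reduces to the $L^{q}(0,\infty;H)$-boundedness of the operator $(Sf)(t)=A\int_{0}^{t}e^{-(t-s)A}f(s)\,\mathrm ds$. Extending $f$ by zero to $t<0$ and using that the kernel $t\mapsto Ae^{-tA}$ is supported in $t\geq0$, causality ensures that the resulting solution on $\mathbb{R}$ restricts to the Duhamel solution on $(0,\infty)$; after Fourier transform in time, $S$ becomes multiplication by the operator-valued symbol $m(\xi)=A(i\xi+A)^{-1}$, $\xi\in\mathbb{R}\setminus\{0\}$. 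Since $A$ generates a bounded analytic semigroup, $A$ is sectorial of spectral angle strictly less than $\pi/2$; hence $i\xi+A$ is invertible for every $\xi\neq0$ and one has the uniform bounds
\[
\sup_{\xi\neq0}\|m(\xi)\|_{\mathcal{L}(H)}<\infty,\qquad\sup_{\xi\neq0}\|\xi\,m'(\xi)\|_{\mathcal{L}(H)}<\infty,
\]
which follow from the standard sectorial resolvent estimates $\|A(i\xi+A)^{-1}\|\leq C$ and $\|\xi(i\xi+A)^{-1}\|\leq C$ together with $m'(\xi)=-iA(i\xi+A)^{-2}$.

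For $q=2$ the result is immediate from Plancherel's theorem (this is de Simon's theorem): $\|Sf\|_{L^{2}(H)}=\|m(\cdot)\hat f\|_{L^{2}(H)}\leq(\sup_{\xi}\|m(\xi)\|)\,\|f\|_{L^{2}(H)}$. For general $1<q<\infty$ I would invoke the operator-valued Mikhlin--Weis Fourier multiplier theorem on $L^{q}(\mathbb{R};H)$. The decisive point, and the reason the Hilbert space hypothesis is essential, is that $H$ is a UMD space in which every uniformly bounded family of operators is automatically $\mathcal{R}$-bounded; therefore the plain uniform bounds on $m(\xi)$ and $\xi\,m'(\xi)$ already supply the $\mathcal{R}$-boundedness required by Weis' theorem, yielding $\|Sf\|_{L^{q}(\mathbb{R};H)}\leq C_{q}\|f\|_{L^{q}(\mathbb{R};H)}$. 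Restricting to $(0,\infty)$ and combining with the homogeneous estimate gives the stated bound.

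The main obstacle is the general exponent $q\neq2$: whereas $q=2$ is a one-line Plancherel computation, the passage to arbitrary $q$ requires the operator-valued multiplier machinery together with the identification of $\mathcal{R}$-boundedness with uniform boundedness on Hilbert spaces. A secondary technical point is to justify rigorously the multiplier representation of $S$ on the half-line---the causality argument that extension by zero reproduces the Duhamel solution on $(0,\infty)$---and to control the symbol near $\xi=0$, which is governed by the bounded analyticity (the uniform resolvent bound on the punctured imaginary axis) rather than by mere sectoriality.
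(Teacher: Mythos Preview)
The paper does not give its own proof of this theorem: it is simply quoted from the literature with the citation \cite{LS,HA1} and used as a black box in Section~\ref{sec:proofoflinear}. So there is nothing to compare against.

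That said, your outline is correct and is precisely the standard route to this result. The decomposition $u=v+w$ with $v(t)=e^{-tA}u_0$ and $w$ the Duhamel solution is the right reduction; the identification of $\|Ae^{-tA}u_0\|_{L^q(0,\infty;H)}$ with the $(H,D(A))_{1-1/q,q}$ norm is exactly the trace characterization of the real interpolation space; and the inhomogeneous part is de~Simon's theorem for $q=2$ via Plancherel, extended to general $1<q<\infty$ by the operator-valued Mikhlin--Weis multiplier theorem, where the Hilbert space hypothesis trivializes the $\mathcal{R}$-boundedness requirement. One minor remark: instead of invoking Weis for $q\neq2$, one can also argue by showing that the kernel $t\mapsto Ae^{-tA}$ satisfies a vector-valued H\"ormander condition (using the analytic estimate $\|A^2e^{-tA}\|\lesssim t^{-2}$), which yields weak type $(1,1)$ and then all $1<q<\infty$ by Marcinkiewicz interpolation and duality; this is closer in spirit to de~Simon's original approach and avoids the $\mathcal{R}$-boundedness machinery altogether. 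Either route is fine here.
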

%
%
%

\section{Proof of Theorem \ref{main theorem1}}
\label{sec:proofofThm}
In this section we prove the main result Theorem \ref{main theorem1}, i.e., the existence and uniqueness of strong solutions to \eqref{model2} equipped with  boundary conditions and initial condition(\ref{BCs}) for a short time. For convenience of computations, we reformulate the system by eliminating the pressure $p$ and the chemical potential $\mu$. Note that
 $$\mathrm{div} \mathbf{u} = \alpha \Delta\mu_p,\ \text{in } \Omega;\  \nabla\mu_p\cdot \mathbf{n}=0,\ \text{on } \partial\Omega.$$
We get $\mu_p=\tfrac{1}{\alpha}G(\mathrm{div}\mathbf{u})$ if $G=\Delta_{N}^{-1}$  is defined  by solving the following weak Neumann Laplace equation
\begin{align*}
\Delta G(g)&=g, \quad \text{in } \Omega,
\\ \partial_{n}G(g)&=0, \quad \text{on } \partial\Omega,
\\ \int_{\Omega}g\mathrm dx&=0,
\end{align*}
where $g=\mathrm{div}\mathbf{u}$. We then  have  $P_{\sigma}\mathbf{u}=\mathbf{u}-\nabla G(g)$.

Hence, we rewrite \eqref{model2} as follows in $Q_{T}$
\begin{subequations}
  \label{model31}
  \begin{align}
    \label{model1-9}
    &\rho\partial_t  \mathbf{u} + \rho\mathbf{u}\cdot\nabla   \mathbf{u}-  \mathrm{div} S(\phi,\mathbb{ D}\mathbf{u})+(\tfrac{1}{\alpha^{2}}\nabla G (\mathrm{div}\mathbf{u})
    +(\phi-\tfrac{1}{\alpha}) \nabla (f(\phi)-\Delta \phi))+ \rho\mathbf{k}=0,\\
    \label{model1-10}
    &\partial_t\phi + \mathrm{div}(\phi  \mathbf{u})=\tfrac{1}{\alpha} \mathrm{div}  \mathbf{u},
  \end{align}
with the following boundary conditions and initial condition
\begin{align}
 \label{BCs3111}
 &\mathbf{u}\cdot \mathbf{n}=0, \quad \text{on } \partial \Omega \times (0,T), \\&
 \label{BCs312}
 \nabla \phi \cdot \mathbf{n}=\nabla G (\mathrm{div} \mathbf{u}) \cdot \mathbf{n} =0, \quad \text{on } \partial \Omega \times (0,T),
 \\&
 \label{BCs321}
(\mathbf{n}\cdot S(\phi,\mathbb{ D}\mathbf{u}))_{\tau}+(a(\phi)\mathbf{u})_{\tau}=0, \quad \text{on } \partial \Omega \times (0,T), \\&
 \label{BCs333}
 (\mathbf{u}, \phi)|_{t=0}=( \mathbf{u}_0, \phi_0), \quad \text{in } \Omega.
\end{align}
\end{subequations}

Before the main argument, we reduce the initial data to be vanishing via temporal trace operator. Namely, one shall choose $\tilde{\phi}\in H^{1}(0,T;H^{1}(\Omega))\cap L^{2}(0,T;H^{3}(\Omega)\cap H_{N}^{2}(\Omega))$ be such that $\tilde{\phi}|_{t=0}=\phi_{0}$. 


To this end, we rewrite the system \eqref{model31} as follows
\begin{equation}
	L(\phi)\binom{\mathbf{u}}{\phi'}=F(\mathbf{u},\phi),\label{main model-00}
\end{equation}
where the operator $L(\phi):X_{T}\rightarrow Y_{T}$  is defined by
\begin{equation}\label{main model}
L(\phi)\binom{\mathbf{u}}{\phi'}=
 \left(
 \begin{array}{c}
\partial_t  \mathbf{u} -\mathrm{div}(\tfrac{1}{\rho_{0}}S(\phi,\mathbb{ D}\mathbf{u}))+h(\phi,\phi')
    \\
\partial_t \phi'-\tfrac{1}{\alpha} \mathrm{div}  \mathbf{u}  
\\
(\mathbf{n}\cdot S(\phi,\mathbb{ D}(P_{\sigma}\mathbf{u})))_{\tau}+(a(\phi)P_{\sigma}\mathbf{u})_{\tau}\big|_{\partial\Omega}\\
(\mathbf{u},\phi')|_{t=0} \\
\end{array}
\right),
\end{equation}
for $h(\phi,\phi'):=\nabla(\mathrm{div} (\tfrac{1}{\rho_{0}}(\tfrac{1}{\alpha}-\phi)\nabla \phi'))$ and $\phi'=\phi- \tilde{\phi}$. Moreover, we define the nonlinear part  $F(\mathbf{u},\phi):X_{T}\rightarrow Y_{T}$ as
\begin{equation*}
F(\mathbf{u},\phi)=
\left
(\begin{array}{c}
F_{1}(\mathbf{u},\phi)\\
F_{2}(\mathbf{u},\phi)\\
F_{3}(\mathbf{u},\phi)\\
(\mathbf{u}_{0},\phi'_{0}) \\
\end{array}
\right),
\end{equation*}
where
\begin{align*}
	F_{1}(\mathbf{u},\phi)&=-\mathbf{u}\cdot\nabla   \mathbf{u}
	- \frac{1}{\rho(\phi)}(\phi-\tfrac{1}{\alpha}) \nabla (f(\phi))-\tfrac{1}{\alpha^{2}}
	\frac{1}{\rho}\nabla G( \mathrm{div}  \mathbf{u}) +\mathbf{k}+\frac{1}{\rho}\nabla(\nabla(\tfrac{1}{\alpha}-\phi)\cdot\nabla\phi)
	\\&\quad
	-\nabla\frac{1}{\rho} \cdot S(\phi,\mathbb{ D}\mathbf{u})+\frac{1}{\rho}\nabla(\tfrac{1}{\alpha}-\phi) \Delta \phi
	+\nabla(\nabla\frac{1}{\rho}\cdot ((\tfrac{1}{\alpha}-\phi)\nabla \phi'))
	\\&\quad
	-(\nabla\frac{1}{\rho} )\mathrm{div}((\phi-\tfrac{1}{\alpha})\nabla \phi))+\nabla(\frac{1}{\rho} \mathrm{div}((\phi-\tfrac{1}{\alpha})\nabla \tilde{\phi})))
	- \nabla\mathrm{div} ((\frac{1}{\rho}-\tfrac{1}{\rho_{0}})(\tfrac{1}{\alpha}-\phi)\nabla \phi'))
	\\&\quad+\mathrm{div}((\frac{1}{\rho}-\tfrac{1}{\rho_{0}})S(\phi,\mathbb{ D}\mathbf{u})),\\
	F_{2}(\mathbf{u},\phi)&=-\partial_t \tilde{\phi}-\mathrm{div}(\phi  \mathbf{u}),\\
	F_{3}(\mathbf{u},\phi)&=
	-(\mathbf{n}\cdot S(\phi,\nabla^{2} G(\mathrm{div}\mathbf{u}))_{\tau}-(a(\phi)\nabla G(\mathrm{div}\mathbf{u})))_{\tau}\big|_{\partial\Omega}.
\end{align*}
Here the spaces are defined as $X_{T}=X_{T}^{1}\times X_{T}^{2}$
and
\begin{align*}
   &X_{T}^{1}=\{\mathbf{u}\in H^{1}(0,T;L^{2}(\Omega))\cap  L^{2}(0,T;H^{2}(\Omega)):\mathbf{u}|_{\partial\Omega}=0\} ,\\&
   X_{T}^{2}=\{\phi\in H^{1}(0,T;H^{1}(\Omega))\cap  L^{2}(0,T;H^{3}(\Omega)): \mathbf{n}\cdot\nabla\phi|_{\partial\Omega}=0\},\\&
  Y_{T}=L^{2}(Q_{T})\times L^{2}(0,T;H^{1}_{(0)}(\Omega))\times\{\mathbf{a}\in H^{\frac{1}{4},\frac{1}{2}}(S_{T}):\mathbf{n}\cdot\mathbf{a}=0\}\times H^{1}_{n}(\Omega)\times H^{2}_{N}(\Omega),
  \end{align*}
with $H^{\frac{1}{4},\frac{1}{2}}(S_{T})=H^{\frac{1}{4}}(0,T;L^{2}(\partial\Omega))\cap L^{2}(0,T;H^{\frac{1}{2}}(\partial\Omega))$. The norms of above spaces are given by
\begin{align*}
   &\|\mathbf{u}\|_{X_{T}^{1}}=\|(\partial_{t}u,\mathbf{u},\nabla\mathbf{u},\nabla^{2}\mathbf{u})\|_{L^{2}(Q_{T})} ,\\&
  \|\phi\|_{X_{T}^{2}} =\|(\phi,\partial_{t}\phi,\nabla \phi,\nabla^{2}\phi,\nabla^{3}\phi)\|_{L^{2}(Q_{T})},\\&
  \|(f,g,\mathbf{a},\mathbf{u}_{0},\phi_{0}')\|_{Y_{T}}=\|(f,\nabla g)\|_{L^{2}(Q_{T})}+\|\mathbf{a}\|_{H^{\frac{1}{4},\frac{1}{2}}(S_{T})}+\|\mathbf u_{0}\|_{H^{1}}+\|\phi_{0}'\|_{H^{2}}.
  \end{align*}

Now we begin to construct strong solutions to the linearized system \eqref{main model-00}, and prove the corresponding operator  is isomorphism between the proper spaces. By means of the contraction mapping principle, the proof of uniqueness and existence can be obtained for a small time $T$.
\begin{Remark}\label{R1}
Before our main proposition, we first give an important fact about $\rho$ throughout this paper. By
Lemma {\ref{jme2}}, one have $\|\phi-\phi_{0}\|_{ L^{\infty}}\leq T^{\frac{1}{8}}\|\phi-\phi_{0}\|_{ X_{T}^{2}}\leq 2T^{\frac{1}{8}}R$ provided $\|\phi\|_{X^{2}_{T}}\leq R$. Thus, choosing $ T>0 $ sufficiently small, one gets $|\phi|\leq\frac{1}{2}$, which implies $\rho\geq\frac{1}{4}$ by (\ref{model2-5}).
\end{Remark}

The following proposition will be  used later in the proof of Theorem \ref{main theorem1}.
\begin{proposition}\label{pjme1}
If  $\phi_{0}\in H^{2}(\Omega)$, $T_{0}>0$  and Assumption {\ref{main assumption}} holds, then $L(\phi_{0}):X_{T}\rightarrow Y_{T}$ is an isomorphism for every
$T\in(0,T_{0}]$ and
\begin{align*}
   \|\big(L(\phi_{0})\big)^{-1}\|_{\mathcal L( Y_{T}, X_{T})}\leq C(T_{0}),
  \end{align*}
where $C(T_{0})$ is a constant dependent of $T_{0}$.  
\end{proposition}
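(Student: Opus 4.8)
The plan is to decouple the system \eqref{main model} into three essentially independent linear problems---a Stokes-type system for $\mathbf{u}$, a parabolic transport-type equation for $\phi'$, and the boundary coupling---and then handle them via the maximal-regularity results collected in Section~\ref{sec:preliminaries}. First I would freeze the coefficient at $\phi_0\in H^2(\Omega)$, so $\rho_0=\rho(\phi_0)$ is a fixed function bounded above and below (here is where the smallness of $\phi_0$, hence $\rho_0\ge \tfrac14$, is used), and $\eta(\phi_0), a(\phi_0)$ are fixed $C^1$ (in fact $W^{1,\infty}$) coefficients on $\overline\Omega$ bounded away from zero by Assumption~\ref{main assumption}. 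The $\mathbf{u}$-equation then reads $\partial_t\mathbf{u}-\mathrm{div}(\tfrac{1}{\rho_0}S(\phi_0,\mathbb D\mathbf{u}))=f-h(\phi_0,\phi')$ with the Navier-type boundary condition $(\mathbf{n}\cdot S(\phi_0,\mathbb D(P_\sigma\mathbf{u})))_\tau+(a(\phi_0)P_\sigma\mathbf{u})_\tau=\mathbf{a}$ on $S_T$ and $\mathbf{u}|_{t=0}=\mathbf{u}_0$; the $\phi'$-equation is $\partial_t\phi'-\tfrac1\alpha\,\mathrm{div}\,\mathbf{u}=g$ with $\phi'|_{t=0}=\phi_0'$ and the Neumann condition $\mathbf{n}\cdot\nabla\phi'|_{\partial\Omega}=0$ built into $X_T^2$. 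The key structural point is that the $\phi'$-equation, once $\mathbf{u}$ is known, is a first-order-in-time ODE in $\phi'$ driven by $\mathrm{div}\,\mathbf{u}\in L^2(0,T;H^1(\Omega))$, but that only gives $\phi'\in H^1(0,T;L^2)$, not the required $L^2(0,T;H^3)$; so the two equations must be solved \emph{jointly}. The resolution is to substitute: differentiating the $\phi'$-equation and using $h(\phi_0,\phi')=\nabla\mathrm{div}(\tfrac{1}{\rho_0}(\tfrac1\alpha-\phi_0)\nabla\phi')$, the term $h(\phi_0,\phi')$ is a fourth-order elliptic operator in $\phi'$ coupled to $\partial_t\mathbf{u}$, so the pair $(\mathbf{u},\phi')$ solves a $2\times2$ parabolic system whose principal part is triangular-plus-lower-order and to which Theorem~\ref{hme} applies once we exhibit the relevant operator as the generator of a bounded analytic semigroup on the base Hilbert space $L^2_\sigma(\Omega)\times H^1_{(0)}(\Omega)$ (or the appropriate reduction after Helmholtz projection).

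The steps, in order, are: (i) reduce to the case $\mathbf{u}_0=0$, $\phi_0'=0$, $\mathbf{a}=0$ by the lifting operators---use $E_T$ from Lemma~\ref{jme4}(2) to remove the boundary datum $\mathbf{a}$ (noting $E_T$ produces a divergence-free lift with vanishing initial value, compatible with the structure of $Y_T$), use the parabolic solution operator for the homogeneous Stokes-Navier problem with data $\mathbf{u}_0\in (H,D(A))_{1/2,2}=H^1_n(\Omega)$ to remove $\mathbf{u}_0$, and lift $\phi_0'\in H^2_N(\Omega)$ by an $H^1(0,T;H^1)\cap L^2(0,T;H^3)$ extension; each lifting changes the right-hand sides $f,g$ within $L^2(Q_T)\times L^2(0,T;H^1_{(0)})$ and is bounded with norm controlled uniformly for $T\le T_0$. (ii) Identify the Helmholtz decomposition $\mathbf{u}=P_\sigma\mathbf{u}+\nabla G(\mathrm{div}\,\mathbf{u})$ and observe that $\mathrm{div}\,\mathbf{u}$ is slaved to $\partial_t\phi'-g$ via the $\phi'$-equation, so the genuinely unknown fields are $\mathbf{v}:=P_\sigma\mathbf{u}$ and $\phi'$; rewrite the $\mathbf{u}$-equation after applying $P_\sigma$ and its complement $I-P_\sigma=\nabla G\,\mathrm{div}$ separately---the solenoidal part gives a Stokes operator $A_S$ with Navier boundary conditions (whose analyticity and domain are recorded in Appendix~\ref{sec:StokesNavier}), and the gradient part is an identity relating $\nabla G(\mathrm{div}\,\mathbf{u})$ to lower-order terms. (iii) Assemble the coupled operator $\mathcal{A}$ acting on $(\mathbf{v},\phi')$ with diagonal entries $A_S$ and the fourth-order Neumann-Laplacian-type operator $\phi'\mapsto -\Delta_N\mathrm{div}_n(\tfrac{1}{\rho_0}(\tfrac1\alpha-\phi_0)\nabla\phi')$ (or more precisely the operator induced by $h$), with off-diagonal coupling through $\tfrac1\alpha\mathrm{div}$ and $\partial_t\mathbf{u}$; show $-\mathcal A$ generates a bounded analytic semigroup on $\mathcal H:=L^2_\sigma(\Omega)\times H^1_{(0)}(\Omega)$. (iv) Apply Theorem~\ref{hme} with $q=2$ to get a unique solution with $\partial_t(\mathbf v,\phi'),\ \mathcal A(\mathbf v,\phi')\in L^2(0,T;\mathcal H)$, then bootstrap: elliptic regularity for the weak Neumann Laplace equation (the estimates $\|u\|_{W^{k+2}_q}\le C\|f\|_{W^k_q}$ recalled in Section~2.2) upgrades $\phi'\in L^2(0,T;H^3(\Omega))$ from $\mathcal A\phi'\in L^2(0,T;H^1_{(0)})$, and Stokes regularity with Navier boundary conditions (Appendix~\ref{sec:StokesNavier}) upgrades $\mathbf v$, hence $\mathbf u=\mathbf v+\nabla G(\mathrm{div}\,\mathbf u)$, to $L^2(0,T;H^2(\Omega))$ with $\partial_t\mathbf u\in L^2(Q_T)$. (v) Collect all bounds: surjectivity of $L(\phi_0)$ is the existence just proved, injectivity follows from the uniqueness in Theorem~\ref{hme} applied to zero data (together with the invertibility of the liftings), and the open mapping theorem gives $L(\phi_0)^{-1}$ bounded; tracking constants through the liftings and the semigroup estimate---all of which are non-decreasing in $T$ and finite at $T_0$---yields $\|L(\phi_0)^{-1}\|_{\mathcal L(Y_T,X_T)}\le C(T_0)$ for all $T\in(0,T_0]$.

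The main obstacle I expect is step (iii): verifying that the coupled operator $-\mathcal A$ is the generator of a bounded analytic semigroup, i.e.\ establishing resolvent estimates $\|(\lambda+\mathcal A)^{-1}\|_{\mathcal L(\mathcal H)}\lesssim |\lambda|^{-1}$ in a sector. The coupling between a second-order Stokes operator and a fourth-order parabolic operator, mediated by the first-order term $\tfrac1\alpha\mathrm{div}$ and a ``$\partial_t\mathbf{u}$-in-$h$'' term, means $\mathcal A$ is not block-triangular in an obvious way, and one must either (a) perform a change of unknowns $\psi:=\phi'-(\text{something involving }\mathbf u)$ to triangularize the principal part, reducing to the already-known analyticity of $A_S$ and of the fourth-order Neumann bi-Laplacian-type operator on $H^1_{(0)}$, and then treat the remaining off-diagonal terms as relatively bounded perturbations of fractional order $<1$; or (b) test the resolvent equation directly in $\mathcal H$ and exploit a suitable energy identity (the sign structure visible in the formal energy law \eqref{ConEnL}) to close the estimate. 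A secondary technical point is the boundary condition bookkeeping: the operator $L(\phi_0)$ in \eqref{main model} uses $P_\sigma\mathbf u$ in the boundary row, so one must check that the Navier condition for $\mathbf v=P_\sigma\mathbf u$ is exactly the one for which the Stokes operator in Appendix~\ref{sec:StokesNavier} is self-adjoint/sectorial, and that the tangential traces match via Lemma~\ref{jme3}. Everything else---the liftings, the elliptic and Stokes bootstraps, the final constant-tracking---is routine given the machinery assembled in Sections~\ref{sec:preliminaries}.
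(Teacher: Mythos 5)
Your skeleton matches the paper's in outline: reduce $\mathbf a$, $\mathbf u_0$, $\phi_0'$ by liftings, apply the Helmholtz decomposition, recast the problem as an abstract evolution equation, invoke maximal $L^2$-regularity (Theorem \ref{hme}), and recover the $X_T$-norms. But the step you yourself flag as the main obstacle --- generation of a bounded analytic semigroup by the coupled operator --- is exactly where the proof lives, and neither of your two candidate strategies closes it as stated. Your option (a), ``triangularize the principal part and treat the off-diagonal terms as relatively bounded perturbations of fractional order $<1$,'' cannot work: in the $(\phi',g)$ block the coupling is \emph{not} of lower order. The term $\Delta_N\mathrm{div}\bigl(\tfrac{1}{\rho_0}(\tfrac1\alpha-\phi_0)\nabla\phi'\bigr)$ feeding $\phi'$ into the $g$-equation and the damping $-\Delta_N(b(\phi_0)\,\cdot)$ acting on $g$ are of the same strength as the diagonal; equivalently, after your elimination of $g=\alpha(\partial_t\phi'-f_2)$ the $\phi'$-problem is genuinely second order in time with damping comparable to the \emph{square root} of the elastic operator, which is precisely the borderline ``structural damping'' regime where analyticity is a theorem, not a perturbation argument. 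The paper's resolution is to keep the first-order system in $\mathbf v=(\phi',g,\mathbf w)$ on $H=H^1(\Omega)\times H^{-1}_{(0)}(\Omega)\times L^2_\sigma(\Omega)$ and apply the Chen--Triggiani theorem to the block $A_1$: one verifies that $A_\alpha^1=\alpha\Delta_N\mathrm{div}(\tfrac{1}{\rho_0}(\tfrac1\alpha-\phi_0)\nabla\cdot)$ and $B_\alpha^1=-\alpha\Delta_N(b(\phi_0)\cdot)$ are positive self-adjoint on $H^{-1}_{(0)}(\Omega)$ (using the pairing \eqref{HLH}), that $A_\alpha^1$ has compact resolvent, and the two-sided comparison $\varrho_1(A_\alpha^1)^{1/2}\le B_\alpha^1\le\varrho_2(A_\alpha^1)^{1/2}$, obtained by sandwiching both operators between powers of $-\Delta_N$ via Heinz--Kato type interpolation. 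Only the Stokes block $\mathbf w$ versus $(\phi',g)$ is triangular, and there the argument is the resolvent computation of Proposition \ref{Proposition4.3} together with the self-adjointness and invertibility of the Navier--Stokes operator $A_a$ (Lemma \ref{Appendix A}); the genuinely lower-order terms ($B_1,B_2$, controlled by $\|g\|_{H^{1/2}}$ in Lemma \ref{lemma4.1}) are the only ones handled by relative-boundedness.

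A second, related gap is the choice of base space and the recovery of the top regularity. The paper measures $\phi'$ in $H^1$, $g=\mathrm{div}\,\mathbf u$ in $H^{-1}_{(0)}$ and $\mathbf w$ in $L^2_\sigma$ precisely so that $\|\mathbf A\mathbf v\|_H+\|\mathbf v\|_H$ is equivalent to $\|\phi'\|_{H^3}+\|g\|_{H^1}+\|\mathbf w\|_{H^2}$ (Lemma \ref{lemma4.2} and Proposition \ref{Proposition4.3}, the $H^3$-regularity for $\phi'$ requiring the elliptic argument with Lemma 4 of \cite{HA12}); maximal regularity then delivers the $X_T$-norm directly, with no separate bootstrap. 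On your proposed space $L^2_\sigma(\Omega)\times H^1_{(0)}(\Omega)$ for $(P_\sigma\mathbf u,\phi')$, after eliminating $g$, it is not clear what operator domain would give $\phi'\in L^2(0,T;H^3)$ and $\mathrm{div}\,\mathbf u\in L^2(0,T;H^1)$ simultaneously, and your ``elliptic regularity bootstrap'' in step (iv) is where this would have to be reconstructed; as written it is not justified. The remaining items of your plan (liftings, surjectivity plus injectivity plus open mapping, uniformity of $C(T_0)$ by extending $\mathbf f$ by zero and using the $T$-independent semigroup estimate) are consistent with what the paper does.
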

We will postpone the proof of   Proposition \ref{pjme1} in section \ref{sec:proofoflinear}.

\begin{proposition}\label{pjme}
Let $(\mathbf{u}_{j},\phi_{j})\in X_{T}$ and  $\|(\mathbf{u}_{j},\phi_{j})\|_{X_{T}}\leq R$ for some $R>0$, $\phi_{j}|_{t=0}=\phi_{0}$ and $|\phi_{0}|\leq \varepsilon_{0}$ {\text{ a.e.~in } $\Omega$}, $j=1,2$, and Assumption {\ref{main assumption}} hold. Then there exists $C(T,R)>0$ such that
\begin{align*}
   \|F(\mathbf{u}_{1},\phi_{1})-F(\mathbf{u}_{2},\phi_{2})\|_{ Y_{T}}\leq (C(T,R)+C\varepsilon_{0})\|(\mathbf{u}_{1}-\mathbf{u}_{2},\phi_{1}-\phi_{2})\|_{ X_{T}},
  \end{align*}
where $C(T,R)\rightarrow0$ as $T\rightarrow0$. 
\end{proposition}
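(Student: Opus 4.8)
The plan is to prove the Lipschitz (contraction-type) estimate term by term, exploiting the multilinear structure of $F = (F_1, F_2, F_3, (\mathbf{u}_0, \phi_0'))$. First I would observe that the last component contributes nothing to the difference since $(\mathbf{u}_1, \phi_1)$ and $(\mathbf{u}_2, \phi_2)$ share the same initial data, so $\phi_1' - \phi_2' = \phi_1 - \phi_2$ vanishes at $t=0$ and only $F_1, F_2, F_3$ matter. Next, I would set up the standard telescoping trick: every term in $F_i$ is a finite sum of products of factors each of which is controlled by $\|(\mathbf{u}_j,\phi_j)\|_{X_T} \le R$ (or by $\|\phi_0\|_{H^2} \le \varepsilon_0$ through $\rho = \rho(\phi)$ and its reciprocal, via Assumption \ref{main assumption} and Remark \ref{R1} which gives $|\phi|\le\frac12$, $\rho\ge\frac14$ for $T$ small), and write each difference as a sum where exactly one factor is replaced by $(\cdot)_1 - (\cdot)_2$ while the others stay as either index $1$ or index $2$ quantities. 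Then the task reduces to bounding a finite list of multilinear expressions in $Y_T = L^2(Q_T) \times L^2(0,T;H^1_{(0)}) \times H^{1/4,1/2}(S_T) \times \cdots$.

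The core of the work is the product estimates in the anisotropic parabolic spaces. For the $L^2(Q_T)$ component ($F_1$ and $F_2$) I would use the mixed-norm Hölder inequality together with the embeddings collected in Appendix \ref{sec:Embedding} (the lemma \ref{jme2} alluded to in Remark \ref{R1}), in particular $X_T^1 \hookrightarrow C([0,T];H^1) \cap L^2(0,T;H^2) \hookrightarrow$ suitable $L^p L^q$ spaces, and $X_T^2 \hookrightarrow C([0,T];H^2) \cap L^2(0,T;H^3)$, so that first derivatives of $\mathbf{u}$ lie in mixed Lebesgue spaces with room to spare and up to third derivatives of $\phi$ lie in $L^2 L^2$. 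The point is that in three dimensions these embeddings leave a positive exponent of $T$ to spare in at least one factor of each product: writing, e.g., $\|\mathbf{u}_1\cdot\nabla\mathbf{u}_1 - \mathbf{u}_2\cdot\nabla\mathbf{u}_2\|_{L^2(Q_T)} \le \|(\mathbf{u}_1-\mathbf{u}_2)\cdot\nabla\mathbf{u}_1\|_{L^2} + \|\mathbf{u}_2\cdot\nabla(\mathbf{u}_1-\mathbf{u}_2)\|_{L^2}$ and estimating $\|\mathbf{u}\|_{L^\infty_t L^6_x}$, $\|\nabla\mathbf{u}\|_{L^2_t L^6_x}$ type norms, one Hölder slot picks up a factor $T^{\theta}$, $\theta>0$. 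The same mechanism — one factor estimated in a space with a time-integrability surplus, controlled via \eqref{HC}, \eqref{HH}, \eqref{BCC} — handles the terms involving $\nabla G(\mathrm{div}\mathbf{u})$ (using the elliptic regularity $\|G(g)\|_{W_q^{k+2}} \lesssim \|g\|_{W_q^k}$ from the weak Neumann Laplace subsection, so $G$ does not cost derivatives), the $f(\phi)$ nonlinearity (Lipschitz on bounded sets by $f \in C^\infty$ and $|\phi|\le\frac12$), and the $\frac1\rho$ factors (the map $\phi \mapsto \frac1{\rho(\phi)}$ is $C^2$ and Lipschitz with its first two derivatives on $\{|\phi|\le\frac12\}$, so differences of these factors are bounded by $\|\phi_1-\phi_2\|$ in the appropriate norm). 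The terms in $F_1$ that are linear in $(\frac1\rho - \frac1{\rho_0})$ — the last group of terms — are where the $\varepsilon_0$ gain appears: since $\rho(\phi) - \rho_0 = \rho(\phi) - \rho(\phi_0) = \frac\varepsilon2(\phi-\phi_0)$ and $|\phi_0|\le\varepsilon_0$, but more to the point because the coefficient $\frac1\rho - \frac1{\rho_0}$ is itself $O(|\phi - \phi_0|) = O(T^{1/8}R + \varepsilon_0)$ uniformly, these contributions are bounded by $(C(T,R) + C\varepsilon_0)\|(\mathbf{u}_1-\mathbf{u}_2,\phi_1-\phi_2)\|_{X_T}$; this is precisely the term that forces the smallness hypothesis on $\phi_0$, as flagged in the third Remark after Theorem \ref{main theorem1}.

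For the boundary components I would treat $F_3$ (and the boundary constraint implicit in the $H^{1/4,1/2}(S_T)$ slot) using trace theory for anisotropic fractional Sobolev spaces: $\mathbf{u} \mapsto (\mathbf{n}\cdot S(\phi,\nabla^2 G(\mathrm{div}\mathbf{u})))_\tau|_{\partial\Omega}$ factors through $\mathbf{u} \in X_T^1 \mapsto \nabla^2 G(\mathrm{div}\mathbf{u})$, which by elliptic regularity lies in $L^2(0,T;H^1) \cap$ (something like) $H^{1/2}(0,T;H^{-1})$, hence its trace lies in $H^{1/4,1/2}(S_T)$, and multiplication by $\eta(\phi)$, $a(\phi)$ (which are $C^2$, bounded, with bounded derivatives by Assumption \ref{main assumption}) is a bounded operation on the boundary space when $\phi$ is controlled in $C([0,T];H^2) \hookrightarrow C(\overline{Q_T})$-type norms; then the difference estimate again telescopes, with one factor supplying a $T^\theta$. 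I also need to check the second component genuinely lands in $L^2(0,T;H^1_{(0)}(\Omega))$, i.e. has spatial mean zero — but $F_2 = -\partial_t\tilde\phi - \mathrm{div}(\phi\mathbf{u})$ and $\mathrm{div}(\phi\mathbf{u})$ is a divergence hence mean-zero over $\Omega$ after using $\mathbf{u}\cdot\mathbf{n}=0$ on $\partial\Omega$, while $\tilde\phi$ can be arranged mean-zero-preserving; so this is a consistency check rather than an estimate. The hardest part will be bookkeeping the boundary term $F_3$ in the correct anisotropic fractional space: one must carefully track which fractional time-regularity survives the composition $\mathbf{u} \rightsquigarrow \mathrm{div}\mathbf{u} \rightsquigarrow G \rightsquigarrow \nabla^2 G \rightsquigarrow$ trace, use the interpolation identities \eqref{ae} and the embeddings \eqref{BB1}--\eqref{BB3}, and verify that multiplication by the nonlinear coefficients $\eta(\phi), a(\phi)$ is bounded on $H^{1/4,1/2}(S_T)$ with a norm depending only on $R$ — and to extract the decaying factor $C(T,R)\to 0$ there one uses \eqref{HC}/\eqref{HH} to trade a bit of the surplus time regularity for a power of $T$. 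Everything else is a routine, if lengthy, application of Hölder's inequality, Sobolev embedding, and the elementary Lipschitz bounds on $s\mapsto f(s)$ and $s\mapsto 1/\rho(s)$ on the interval $[-\tfrac12,\tfrac12]$.
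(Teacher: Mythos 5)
Your overall scheme -- telescoping the multilinear terms, extracting a spare power of $T$ from a factor with surplus time integrability, elliptic regularity for $G$, and the anisotropic trace/product estimates for the boundary term $F_{3}$ -- is essentially the paper's. But there is a genuine gap: you misidentify where the $C\varepsilon_{0}$ term comes from, and in doing so you dismiss as ``routine'' the one term for which your ``one H\"older slot picks up $T^{\theta}$'' mechanism actually fails. The second component of $Y_{T}$ is measured by $\|\nabla F_{2}\|_{L^{2}(Q_{T})}$, so $\mathrm{div}(\phi\mathbf{u})$ must be estimated in $L^{2}(0,T;H^{1}(\Omega))$; the difference then contains the piece $\phi_{2}\,\nabla(\mathbf{u}_{1}-\mathbf{u}_{2})$ in $H^{1}$, i.e.\ essentially $\phi_{2}\,\nabla^{2}(\mathbf{u}_{1}-\mathbf{u}_{2})$ in $L^{2}(Q_{T})$. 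Since $\nabla^{2}(\mathbf{u}_{1}-\mathbf{u}_{2})$ lies exactly in $L^{2}(Q_{T})$ -- this is the full $X_{T}^{1}$ norm, with no time-integrability surplus -- no positive power of $T$ can be extracted from this product. The only available smallness is $\|\phi_{2}\|_{L^{\infty}(Q_{T})}\leq\|\phi_{2}-\phi_{0}\|_{L^{\infty}(Q_{T})}+\|\phi_{0}\|_{L^{\infty}(\Omega)}\leq C(R)T^{\frac{1}{8}}+\varepsilon_{0}$ (Lemma \ref{jme2}), which is exactly the paper's estimate \eqref{div-phi-u} and the sole reason both for the $+\,C\varepsilon_{0}$ in the statement and for the smallness hypothesis on $\phi_{0}$ (cf.\ the second remark after Theorem \ref{main theorem1}, which points precisely at $\phi\,\mathrm{div}\mathbf{u}$). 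Your plan, which expects every product in $F_{2}$ to yield a factor $T^{\theta}$, would break exactly here.

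Conversely, the terms you single out as the source of $\varepsilon_{0}$ -- those in $F_{1}$ containing $\frac{1}{\rho}-\frac{1}{\rho_{0}}$ -- do not need it: since $\phi_{1},\phi_{2}$ share the initial datum $\phi_{0}$, Lemma \ref{jme2} gives $\|\phi_{j}-\phi_{0}\|_{L^{\infty}(Q_{T})}\leq C(R)T^{\frac{1}{8}}$, so the coefficient $\frac{1}{\rho(\phi_{j})}-\frac{1}{\rho_{0}}$ is $O(T^{\frac{1}{8}}R)$ with no $\varepsilon_{0}$, and indeed the paper's bound for $F_{1}$ is $C(R)T^{\frac{1}{4}}\|(\mathbf{u}_{1}-\mathbf{u}_{2},\phi_{1}-\phi_{2})\|_{X_{T}}$ with no $\varepsilon_{0}$ at all. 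So your final inequality has the right shape, but the mechanism you propose does not prove it; to complete the argument you must relocate the $\|\phi\|_{L^{\infty}}\leq C(R)T^{\frac{1}{8}}+\varepsilon_{0}$ device to the top-order velocity term in $F_{2}$, where it is indispensable.
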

\begin{proof}


Firstly, we frequently use the following estimate
 \begin{align*}
   \|\phi\|_{ L^{\infty}(0,T;H^{2}(\Omega))}\leq C\|\phi\|_{ H^{1}(0,T;H^{1}(\Omega))}^{\frac{1}{2}}\|\phi\|_{L^{2}(0,T;H^{3}(\Omega))}^{\frac{1}{2}}\leq C\|\phi\|_{ X_{T}^{2}},
  \end{align*}
which is due to  {(\ref{ae})} and Lemma {\ref{jme}}, $C$ is a constant independent of $T$. Since $H^{2}(\Omega)$ is  supercritical $(H^{2}(\Omega)\hookrightarrow L^{\infty}(\Omega))$, the following result  also holds if $\phi_{1},\phi_{2}\in X_{T}^{2}$ and $f\in C^{3}$
\begin{align}
   \|f(\phi_{1})-f(\phi_{2})\|_{ L^{\infty}(0,T;L^{\infty}(\Omega))}\leq\|f(\phi_{1})-f(\phi_{2})\|_{ L^{\infty}(0,T;H^{2}(\Omega))}\leq C\|\phi_{1}-\phi_{2}\|_{ X_{T}^{2}}.
  \end{align}

We divide the proof of this proposition into three parts to proceed.

{\bf Estimation of $F_1(\mathbf{u},\phi)$.}
Starting with H\"{o}lder's inequality, using  Lemma {\ref{jme}} and Sobolev embedding theorem, we have
\begin{align*}
   &\|\mathbf{u}_{1}\cdot\nabla   \mathbf{u}_{1}-\mathbf{u}_{2}\cdot\nabla   \mathbf{u}_{2}\|_{ L^{2}(0,T;L^{2}(\Omega))}
   \\&=\|\mathbf{u}_{1}\cdot\nabla   \mathbf{u}_{1}-\mathbf{u}_{2}\cdot\nabla   \mathbf{u}_{1}+\mathbf{u}_{2}\cdot\nabla   \mathbf{u}_{1}-\mathbf{u}_{2}\cdot\nabla   \mathbf{u}_{2}\|_{ L^{2}(0,T;L^{2}(\Omega))}
    \\&\leq\|  \mathbf{u}_{1}-  \mathbf{u}_{2}\|_{ L^{\infty}(0,T;L^{6}(\Omega))}\|\nabla\mathbf{u}_{1}\|_{ L^{2}(0,T;L^{3}(\Omega))}
    +\| \mathbf{u}_{2}\|_{ L^{\infty}(0,T;L^{6}(\Omega))}{\|\nabla \mathbf{u}_{1}-\nabla  \mathbf{u}_{2}\|_{ L^{2}(0,T;L^{3}(\Omega))}}
     \\&\leq CT^{\frac{1}{8}}\|  \mathbf{u}_{1}-  \mathbf{u}_{2}\|_{ L^{\infty}(0,T;H^{1}(\Omega))}\|\nabla\mathbf{u}_{1}\|_{L^{4}(0,T;L^{2}(\Omega))}^{\frac{1}{2}}
     \|\nabla\mathbf{u}_{1}\|_{L^{2}(0,T;L^{6}(\Omega))} ^{\frac{1}{2}} \\&\quad+CT^{\frac{1}{4}}\| \mathbf{u}_{2}\|_{ L^{\infty}(0,T;H^{1}(\Omega))}\|\nabla \mathbf{u}_{1}-\nabla  \mathbf{u}_{2}\|_{ L^{\infty}(0,T;L^{2}(\Omega))}^{\frac{1}{2}}
     \|\nabla \mathbf{u}_{1}-\nabla  \mathbf{u}_{2}\|_{ L^{2}(0,T;L^{6}(\Omega))}^{\frac{1}{2}}
    \\ &\leq C(R)T^{\frac{1}{4}}\|\mathbf{u}_{1}-  \mathbf{u}_{2}\|_{ X_{T}^{1}}.
  \end{align*}
Next, by Remark \ref{R1} and Lemma {\ref{jme}}, we can show
\begin{align*}
   &\quad\|\tfrac{1}{\rho(\phi_{1})}\tfrac{1}{\alpha^{2}}\nabla G( \mathrm{div}\mathbf{u}_{1})-\tfrac{1}{\rho(\phi_{2})}\tfrac{1}{\alpha^{2}}\nabla G( \mathrm{div} \mathbf{u}_{2})\|_{ L^{2}(0,T;L^{2}(\Omega))}
    \\&\quad\leq CT^{\frac{1}{2}}\|\tfrac{1}{\rho(\phi_{1})}-\tfrac{1}{\rho(\phi_{2})}\|_{ L^{\infty}(0,T;L^{6}(\Omega))}
    \|\nabla G(\mathrm{div}  \mathbf{u}_{1})\|_{ L^{\infty}(0,T;L^{3}(\Omega))}
    \\&\qquad+CT^{\frac{1}{2}}\|\tfrac{1}{\rho(\phi_{2})}\|_{ L^{\infty}(0,T;L^{6}(\Omega))}\|\nabla G(\mathrm{div} \mathbf{u}_{1})-\nabla G( \mathrm{div}  \mathbf{u}_{2})\|_{ L^{\infty}(0,T;L^{3}(\Omega))}
   \\&\quad\leq C(R)T^{\frac{1}{2}}(\|  \phi_{1}-  \phi_{2}\|_{X_{T}^{2}}+\|  \mathbf{u}_{1}-  \mathbf{u}_{2}\|_{ X_{T}^{1}}).
  \end{align*}
By Remark \ref{R1} and Lemma \ref{jme2},
  one similarly deduces
\begin{align*}
   &\|\tfrac{1}{\rho(\phi_{1})}(\phi_{1}-\tfrac{1}{\alpha}) \nabla (f(\phi_{1}))-\tfrac{1}{\rho(\phi_{2})}(\phi_{2}-\tfrac{1}{\alpha}) \nabla (f(\phi_{2}))\|_{ L^{2}(0,T;L^{2}(\Omega))}
   \\&= \|\tfrac{1}{\rho(\phi_{1})}(\phi_{1}-\tfrac{1}{\alpha}) \nabla (f(\phi_{1}))-\tfrac{1}{\rho(\phi_{2})}(\phi_{1}-\tfrac{1}{\alpha}) \nabla (f(\phi_{1}))
   \\&\quad+\tfrac{1}{\rho(\phi_{2})}(\phi_{1}-\tfrac{1}{\alpha}) \nabla (f(\phi_{1}))-\tfrac{1}{\rho(\phi_{2})}(\phi_{2}-\tfrac{1}{\alpha}) \nabla (f(\phi_{1}))
   \\&\quad+\tfrac{1}{\rho(\phi_{2})}(\phi_{2}-\tfrac{1}{\alpha}) \nabla (f(\phi_{1}))-\tfrac{1}{\rho(\phi_{2})}(\phi_{2}-\tfrac{1}{\alpha}) \nabla (f(\phi_{2}))\|_{ L^{2}(0,T;L^{2}(\Omega))}
   \\&\leq CT^{\frac{1}{2}}\big(\|\tfrac{1}{\rho(\phi_{1})}-\tfrac{1}{\rho(\phi_{2})} \|_{ L^{\infty}(0,T;L^{\infty}(\Omega))}\|(
   \phi_{1}-\tfrac{1}{\alpha})\|_{ L^{\infty}(0,T;L^{\infty}(\Omega))}\|\nabla (f(\phi_{1}))\|_{ L^{\infty}(0,T;L^{2}(\Omega))}
   \\&\quad+\|\tfrac{1}{\rho(\phi_{2})} \|_{ L^{\infty}(0,T;L^{\infty}(\Omega))}\|(
   \phi_{1}-\phi_{2})\|_{ L^{\infty}(0,T;L^{\infty}(\Omega))}\|\nabla (f(\phi_{1}))\|_{ L^{\infty}(0,T;L^{2}(\Omega))}
   \\&\quad+\|\tfrac{1}{\rho(\phi_{2})} \|_{ L^{\infty}(0,T;L^{6}(\Omega))}\|(
   \phi_{2}-\tfrac{1}{\alpha})\|_{ L^{\infty}(0,T;L^{\infty}(\Omega))}\|\nabla (f(\phi_{1})-f(\phi_{2}))\|_{ L^{\infty}(0,T;L^{3}(\Omega))}\big)
   \\&\leq C(R)T^{\frac{1}{2}}\|\phi_{1}-  \phi_{2}\|_{ X_{T}^{2}}.
  \end{align*}

In view of Lemma {\ref{jme}}, Lemma {\ref{jme1}}, $(H^{2}(\Omega),H^{3}(\Omega))_{\frac{1}{2},1}=B^{\frac{5}{2}}_{2,1}(\Omega)$(cf. \cite{JJ}) and Sobolev embedding,  we infer
\begin{align*}
   &\|\tfrac{1}{\rho(\phi_{1})}\nabla(\nabla\phi_{1}\cdot\nabla\phi_{1})
   -\tfrac{1}{\rho(\phi_{2})}\nabla(\nabla\phi_{2}\cdot\nabla\phi_{2})\|_{ L^{2}(0,T;L^{2}(\Omega))}
  \\&= \|\tfrac{1}{\rho(\phi_{1})}\nabla(\nabla(\phi_{1})\cdot\nabla\phi_{1})
  -\tfrac{1}{\rho(\phi_{2})}\nabla(\nabla\phi_{1}\cdot\nabla\phi_{1})
  \\&\quad+\tfrac{1}{\rho(\phi_{2})}\nabla(\nabla\phi_{1}\cdot\nabla\phi_{1})
   -\tfrac{1}{\rho(\phi_{2})}\nabla(\nabla\phi_{2}\cdot\nabla\phi_{1})
   \\&\quad+\tfrac{1}{\rho(\phi_{2})}\nabla(\nabla\phi_{2}\cdot\nabla\phi_{1})
   -\tfrac{1}{\rho(\phi_{2})}\nabla(\nabla\phi_{2}\cdot\nabla\phi_{2})\|_{ L^{2}(0,T;L^{2}(\Omega))}
   \\&\leq\|\tfrac{1}{\rho(\phi_{1})}-\tfrac{1}{\rho(\phi_{2})}\|_{ L^{\infty}(0,T;L^{\infty}(\Omega))}\|\nabla\phi_{1}\cdot\nabla\phi_{1}\|_{ L^{2}(0,T;H^{1}(\Omega))}
   \\&\quad+\|\tfrac{1}{\rho(\phi_{2})}\|_{ L^{\infty}(0,T;L^{\infty}(\Omega))}
   \|\nabla(\phi_{1}-\phi_{2})\cdot\nabla\phi_{1}\|_{ L^{2}(0,T;H^{1}(\Omega))}
   \\&\quad+\|\tfrac{1}{\rho(\phi_{2})}\|_{ L^{\infty}(0,T;L^{\infty}(\Omega))}
   \|\nabla\phi_{2}\cdot\nabla(\phi_{1}-\phi_{2})\|_{ L^{2}(0,T;H^{1}(\Omega))}
     \\&\leq  CT^{\frac{1}{4}}R
     \Big(
     \|\phi_{1}-\phi_{2}\|_{
     L^{\infty}(0,T;L^{\infty})}\|\phi_{1}\|_{ L^{\infty}(0,T;H^{2}(\Omega))} \|\phi_{1}\|_{ L^{4}(0,T;B^{\frac{5}{2}}_{2,1}(\Omega))}
     \\&\qquad\qquad+{\|\phi_{1}-\phi_{2}\|_{ L^{\infty}(0,T;H^{2}(\Omega))}}
     \|\phi_{1}\|_{ L^{4}(0,T;B^{\frac{5}{2}}_{2,1}(\Omega))}
     \\&\qquad\qquad+\|\phi_{2}\|_{ L^{\infty}(0,T;H^{2}(\Omega))}
     \|\phi_{1}-\phi_{2}\|_{ L^{4}(0,T;B^{\frac{5}{2}}_{2,1}(\Omega))}
      \Big)
     \\&\leq C(R)T^{\frac{1}{4}}\|\phi_{1}-  \phi_{2}\|_{ X_{T}^{2}}.
  \end{align*}

Now using Lemma {\ref{jme}} and Lemma \ref{jme2} , one obtains
\begin{align*}
	&\|\nabla\tfrac{1}{\rho(\phi_{1})} \cdot S(\phi_{1},\mathbb{ D}\mathbf{u}_{1})-\nabla\tfrac{1}{\rho(\phi_{2})} \cdot S(\phi_{2},\mathbb{ D}\mathbf{u}_{2})\|_{ L^{2}(0,T;L^{2}(\Omega))}
	\\&= \|\nabla\tfrac{1}{\rho(\phi_{1})} \cdot S(\phi_{1},\mathbb{ D}\mathbf{u}_{1})
	-\nabla\tfrac{1}{\rho(\phi_{2})} \cdot S(\phi_{1},\mathbb{ D}\mathbf{u}_{1})
	\\&\quad+\nabla\tfrac{1}{\rho(\phi_{2})} \cdot S(\phi_{1},\mathbb{ D}\mathbf{u}_{1})
	-\nabla\tfrac{1}{\rho(\phi_{2})} \cdot S(\phi_{2},\mathbb{ D}\mathbf{u}_{2})
	\|_{ L^{2}(0,T;L^{2}(\Omega))}
	\\&\leq CT^{\frac{1}{4}}\Big(\|\nabla\tfrac{1}{\rho(\phi_{1})}-\nabla\tfrac{1}{\rho(\phi_{2})}\|_{ L^{\infty}(0,T;L^{6})}\|S(\phi_{1},\mathbb{ D}\mathbf{u}_{1})\|_{ L^{4}(0,T;L^{3}(\Omega))}
	\\&\qquad\qquad+\|\nabla\tfrac{1}{\rho(\phi_{2})}\|_{ L^{\infty}(0,T;L^{6}(\Omega))}
	\|S(\phi_{1},\mathbb{ D}\mathbf{u}_{1})-S(\phi_{2},\mathbb{ D}\mathbf{u}_{2})\|_{ L^{4}(0,T;L^{3}(\Omega))}\Big)
	\\& \leq C(R)T^{\frac{1}{4}}(\|\phi_{2}-\phi_{1}\|_{ X_{T}^{2}}+\|\mathbf u_{2}-\mathbf u_{1}\|_{ X_{T}^{1}}),
\end{align*}
and
\begin{align*}
   &\|\tfrac{1}{\rho(\phi_{1})}\nabla(\tfrac{1}{\alpha}+\phi_{1}) \Delta  {\phi}_{1}-\tfrac{1}{\rho(\phi_{2})}\nabla(\tfrac{1}{\alpha}+\phi_{2}) \Delta {\phi}_{2}\|_{ L^{2}(0,T;L^{2}(\Omega))}
  \\&= \|\tfrac{1}{\rho(\phi_{1})}\nabla \phi_{1} \Delta {\phi}_{1}
  -\tfrac{1}{\rho(\phi_{2})}\nabla \phi_{1} \Delta {\phi}_{1} +\tfrac{1}{\rho(\phi_{2})}\nabla \phi_{1} \Delta {\phi}_{1}
  -\tfrac{1}{\rho(\phi_{2})}\nabla \phi_{2} \Delta {\phi}_{1}
   \\&\quad+\tfrac{1}{\rho(\phi_{2})}\nabla \phi_{2} \Delta {\phi}_{1}
   -\tfrac{1}{\rho(\phi_{2})}\nabla \phi_{2} \Delta {\phi}_{2}
\|_{ L^{2}(0,T;L^{2}(\Omega))}
  \\&\leq CT^{\frac{1}{4}}\Big(\|\tfrac{1}{\rho(\phi_{1})}-\tfrac{1}{\rho(\phi_{2})}\|_{ L^{\infty}(0,T;L^{\infty}(\Omega))}\|\nabla\phi_{1}\|_{ L^{\infty}(0,T;L^{6}(\Omega))}\|\Delta {\phi}_{1}\|_{ L^{4}(0,T;L^{3}(\Omega))}
   \\&\qquad+\|\tfrac{1}{\rho(\phi_{2})}\|_{ L^{\infty}(0,T;L^{\infty}(\Omega))}\|\nabla\phi_{1}-\nabla\phi_{2}\|_{ L^{\infty}(0,T;L^{6}(\Omega))}\|\Delta {\phi}_{1}\|_{ L^{4}(0,T;L^{3}(\Omega))}
    \\&\qquad+\|\tfrac{1}{\rho(\phi_{2})}\|_{ L^{\infty}(0,T;L^{\infty}(\Omega))}\|\nabla\phi_{2}\|_{
    L^{\infty}(0,T;L^{6}(\Omega))}\|\Delta {\phi}_{1}-\Delta {\phi}_{1}\|_{ L^{4}(0,T;L^{3}(\Omega))} \Big)
   \\& \leq C(R)T^{\frac{1}{4}}\|\phi_{2}-\phi_{1}\|_{ X_{T}^{2}}.
  \end{align*}
A similar argument as above yields
\begin{align*}
   &\|(\nabla\tfrac{1}{\rho(\phi_{1})} )\mathrm{div} ((\phi_{1}-\tfrac{1}{\alpha})\nabla \phi_{1}))-(\nabla\tfrac{1}{\rho(\phi_{2})} )\mathrm{div} ((\phi_{2}-\tfrac{1}{\alpha})\nabla \phi_{2})\|_{ L^{2}(0,T;L^{2}(\Omega))}
   \leq C(R)T^{\frac{1}{4}}\|\phi_{2}-\phi_{1}\|_{ X_{T}^{2}},
  \end{align*}
and
\begin{align*}
 \|\nabla(\nabla\tfrac{1}{\rho(\phi_{1})}\cdot ((\tfrac{1}{\alpha}-\phi_{1})\nabla (\phi_{1})')-\nabla(\nabla\tfrac{1}{\rho(\phi_{1})}\cdot ((\tfrac{1}{\alpha}-\phi_{2})\nabla (\phi_{2})'))\|_{ L^{2}(0,T;L^{2}(\Omega))}
   \leq C(R)T^{\frac{1}{4}}\|\phi_{2}-\phi_{1}\|_{ X_{T}^{2}}.
  \end{align*}
Analogously by Lemma {\ref{jme1}} and Lemma \ref{jme2}, there holds
\begin{align*}
&\|\nabla(\tfrac{1}{\rho(\phi_{1})} \mathrm{div}((\phi_{1}-\tfrac{1}{\alpha})\nabla \tilde{\phi})))
-\nabla(\tfrac{1}{\rho(\phi_{2})} \mathrm{div}((\phi_{2}-\tfrac{1}{\alpha})\nabla \tilde{\phi})))\|_{ L^{2}(0,T;L^{2}(\Omega))}
\\&\leq C\|\tfrac{1}{\rho(\phi_{1})} \mathrm{div} ((\phi_{1}-\tfrac{1}{\alpha})\nabla
 \tilde{\phi})-\tfrac{1}{\rho(\phi_{2})} \mathrm{div} ((\phi_{2}-\tfrac{1}{\alpha})\nabla \tilde{\phi})\|_{
 L^{2}(0,T;H^{1}(\Omega))}
\\&\leq C\|(\tfrac{1}{\rho(\phi_{1})}-\tfrac{1}{\rho(\phi_{2})})(\nabla\phi_{1}\cdot\nabla
\tilde{\phi}+(\phi_{1}-\tfrac{1}{\alpha})\Delta \tilde{\phi})\|_{ L^{2}(0,T;H^{1}(\Omega))}
\\&\quad+C\|\tfrac{1}{\rho(\phi_{2})}(\nabla(\phi_{1}-\phi_{2})\cdot\nabla
\tilde{\phi}+(\phi_{1}-\phi_{2})\Delta \tilde{\phi})\|_{ L^{2}(0,T;H^{1}(\Omega))}
\\&\leq  C(R,\tilde{\phi})T^{\frac{1}{8}}\|\phi_{2}-\phi_{1}\|_{ X_{T}^{2}},
\end{align*}
and
\begin{align*}
&\quad\|\nabla\mathrm{div} ((\tfrac{1}{\rho(\phi_{1})}-\tfrac{1}{\rho_{0}})(\tfrac{1}{\alpha}-\phi_{1})\nabla \phi'_{1}))-\nabla\mathrm{div} ((\tfrac{1}{\rho(\phi_{2})}-\tfrac{1}{\rho_{0}})(\tfrac{1}{\alpha}-\phi_{2})\nabla \phi'_{2}))\|_{ L^{2}(0,T;L^{2}(\Omega))}
\\&\quad\leq C\| ((\tfrac{1}{\rho(\phi_{1})}-\tfrac{1}{\rho_{0}})(\tfrac{1}{\alpha}-\phi_{1})\nabla \phi'_{1}))- ((\tfrac{1}{\rho(\phi_{2})}-\tfrac{1}{\rho_{0}})(\tfrac{1}{\alpha}-\phi_{2})\nabla \phi'_{2}))\|_{ L^{2}(0,T;H^{2}(\Omega))}
\\&\quad\leq C(R) T^{\frac{1}{8}} \|\phi_{2}-\phi_{1}\|_{ X_{T}^{2}},
\end{align*}
and
\begin{align*}
&\|\mathrm{div}((\tfrac{1}{\rho(\phi_{1})}-\tfrac{1}{\rho_{0}})S(\phi_{1},\mathbb{ D}\mathbf{u}_{1}))-\mathrm{div}((\tfrac{1}{\rho(\phi_{2})}-\tfrac{1}{\rho_{0}})S(\phi_{2},\mathbb{ D}\mathbf{u}_{2}))\|_{ L^{2}(0,T;L^{2}(\Omega))}
\\&\quad\leq\|((\tfrac{1}{\rho(\phi_{1})}-\tfrac{1}{\rho_{0}})S(\phi_{1},\mathbb{ D}\mathbf{u}_{1}))-((\tfrac{1}{\rho(\phi_{2})}-\tfrac{1}{\rho_{0}})S(\phi_{2},\mathbb{ D}\mathbf{u}_{2}))\|_{ L^{2}(0,T;H^{1}(\Omega))}
\\&\quad\leq  C(R)T^{\frac{1}{8}}(\|\phi_{2}-\phi_{1}\|_{ X_{T}^{2}}+\|\mathbf{u}_{1}-\mathbf{u}_{2}\|_{ X_{T}^{1}}).
\end{align*}
Therefore,
\begin{equation*}
	\|F_1(\mathbf{u}_1,\phi_1)- F_1(\mathbf{u}_2,\phi_2)\|_{L^2(Q_T)}
	\leq C(R) T^{\frac{1}{8}} \|(\mathbf{u}_1 - \mathbf{u}_2,\phi_1 - \phi_2)\|_{X_T}.
\end{equation*}

{\bf Estimation of $F_2(\mathbf{u},\phi)$.}
Recall that $ F_{2}(\mathbf{u},\phi)=-\partial_t \tilde{\phi}-\mathrm{div}(\phi  \mathbf{u}) $. Since $ \tilde{\phi} $ is fixed, we only need to proceed with the Lipschitz type for $ \mathrm{div}(\phi  \mathbf{u}) $. Namely, by the embeddings, interpolations and Lemma \ref{jme2}, one gets
\begin{align}
	\nonumber
	&\|\mathrm{div}({\phi_{1}}  \mathbf{u}_{1})-\mathrm{div}({\phi_{2}}  \mathbf{u}_{2})\|_{ L^{2}(0,T;H^{1}(\Omega))}
	\\
	\nonumber&\leq C\|\phi_{1} \mathbf{u}_{1}-\phi_{2} \mathbf{u}_{1}
	+\phi_{2}  \mathbf{u}_{1}-\phi_{2} \mathbf{u}_{2}\|_{ L^{2}(0,T;H^{2}(\Omega))}
	\\
	\nonumber&\leq C\Big(\|(\phi_{1}  -\phi_{2})  \mathbf{u}_{1}\|_{ L^{2}(0,T;H^{1}(\Omega))}
	+\|\nabla(\phi_{1}  -\phi_{2})  \mathbf{u}_{1}\|_{ L^{2}(0,T;H^{1}(\Omega))}
	\\
	\nonumber&\qquad
	+\|(\phi_{1}  -\phi_{2})  \nabla\mathbf{u}_{1}\|_{ L^{2}(0,T;H^{1}(\Omega))}+ \|\phi_{2} (\mathbf{u}_{1}-  \mathbf{u}_{2})\|_{ L^{2}(0,T;H^{1}(\Omega))}
	\\
	\nonumber&\qquad
	+
	\|\nabla\phi_{2}\cdot  (\mathbf{u}_{1}-  \mathbf{u}_{2})\|_{ L^{2}(0,T;H^{1}(\Omega))}+\|\phi_{2}  \nabla(\mathbf{u}_{1}-  \mathbf{u}_{2})\|_{ L^{2}(0,T;H^{1}(\Omega))}\Big)
	\\
	\nonumber& \leq C\Big(\|\phi_{1}-\phi_{2}\|_{ L^{\infty}(0,T;B_{2,1}^{\frac{3}{2}}(\Omega))}\|\mathbf{u}_{1}\|_{ L^{2}(0,T;H^{1}(\Omega))}
	+\|\phi_{1}-\phi_{2}\|_{ L^{4}(0,T;B_{2,1}^{\frac{5}{2}}(\Omega))}\|\mathbf{u}_{1}\|_{ L^{4}(0,T;H^{1}(\Omega))}
	\\
	\nonumber&\qquad+\|\phi_{1}-\phi_{2}\|_{ L^{\infty}(0,T;B_{2,1}^{\frac{3}{2}}(\Omega))}\|\mathbf{u}_{1}\|_{ L^{2}(0,T;H^{2})}
	+\|\phi_{2}\|_{ L^{\infty}(0,T;B_{2,1}^{\frac{3}{2}}(\Omega))}\|\mathbf{u}_{1}-\mathbf{u}_{2}\|_{ L^{2}(0,T;H^{1}(\Omega))}
	\\
	\nonumber&\qquad+\|\phi_{2}\|_{ L^{2}(0,T;B_{2,1}^{\frac{5}{2}}(\Omega))}\|\mathbf{u}_{1}-\mathbf{u}_{2}\|_{ L^{\infty}(0,T;H^{1}(\Omega))}
	\\
	\nonumber&\qquad
	+\|\phi_{2}\|_{ L^{\infty}(0,T;L^{\infty}(\Omega))}\|\mathbf{u}_{1}-\mathbf{u}_{2}\|_{ L^{2}(0,T;H^{2}(\Omega))}\Big)
	\\
	\label{div-phi-u}& \leq C(R)T^{\frac{1}{8}}(\|\mathbf{u}_{1}-\mathbf{u}_{2}\|_{ X_{T}^{1}}+\|\phi_{2}-\phi_{1}\|_{ X_{T}^{2}})+C\varepsilon_{0}\|\mathbf{u}_{1}-\mathbf{u}_{2}\|_{ X_{T}^{1}},
\end{align}
{where Remark \ref{R1} is used in the estimate in the last line.}

Then,
\begin{equation*}
	\|F_2(\mathbf{u}_1,\phi_1)- F_2(\mathbf{u}_2,\phi_2)\|_{L^{2}(0,T;H^{1}_{(0)}(\Omega))}
	\leq C(R) T^{\frac{1}{8}} \|(\mathbf{u}_1 - \mathbf{u}_2,\phi_1 - \phi_2)\|_{X_T}+C\varepsilon_{0}\|\mathbf{u}_{1}-\mathbf{u}_{2}\|_{ X_{T}^{1}}.
\end{equation*}

{\bf Estimation of $F_3(\mathbf{u},\phi)$.}
As for the boundary term, since $(\mathbf{n}\cdot (\mathrm{div}\mathbf{u}_{j})\mathbf{I})_{\tau}=0$ where $j=1,2$, we have
\begin{align*}
   &\|(\mathbf{n}\cdot S(\phi_{1},\nabla^{2} G(\mathrm{div}\mathbf{u}_{1}))_{\tau}-(\mathbf{n}\cdot S(\phi_{2},\nabla^{2} G(\mathrm{div}\mathbf{u}_{2}))_{\tau}\|_{ H^{\frac{1}{4},\frac{1}{2}}(S_{T})}
   \\&\leq\|(2\eta(\phi_{1})-2\eta(\phi_{2}))(\mathbf{n}\cdot \nabla^{2} G(\mathrm{div}\mathbf{u}_{1}))_{\tau}\|_{ H^{\frac{1}{4},\frac{1}{2}}(S_{T})} \\&\quad+\|2\eta(\phi_{2})(\mathbf{n}\cdot \nabla^{2} G(\mathrm{div}(\mathbf{u}_{1}-\mathbf{u}_{2})))_{\tau}\|_{ H^{\frac{1}{4},\frac{1}{2}}(S_{T})}
   .
  \end{align*}

  We can note hat
  \begin{align}
	\|fg\|_{L^{2}(\partial\Omega)}&\leq C\|f\|_{L^{4}(\partial\Omega)}\|g\|_{ L^{4}(\partial\Omega)}\leq C\|f\|_{H^{\frac{1}{2}}(\partial\Omega)}\|g\|_{ H^{\frac{1}{2}}(\partial\Omega)}\label{sobolev em-01}
\end{align}
and
  \begin{align}
\|fg\|_{H^{\frac{1}{2}}(\partial\Omega)}&\leq C\|f\|_{H^{\frac{1}{2}}(\partial\Omega)}\|g\|_{B_{2,1}^{1}(\partial\Omega)}.\label{sobolev em-02}
\end{align}
 One can refer to \cite {HA1}\cite {JJ2}  for the last inequality.

It follows from {\eqref{frcsobolev},\eqref{frcsobolev-1},\eqref{HC}, \eqref{BCC}, \eqref{sobolev em-01} and \eqref{sobolev em-02}  that} 
\begin{align*}
	&\|(\eta(\phi_{1})-\eta(\phi_{2}))\mathbf{a}\|_{ H^{\frac{1}{4},\frac{1}{2}}(S_{T})}
	\\&\leq {CT^{\frac{1}{4}}}\|\eta(\phi_{1})-\eta(\phi_{2})\|_{C^{\frac{1}{2}}(0,T;H^{\frac{1}{2}}(\partial\Omega))}
	\|\mathbf{a}\|_{L^{2}(0,T;H^{\frac{1}{2}}(\partial\Omega))}
	\\&\quad +CT^{\frac{1}{4}}\|\eta(\phi_{1})-\eta(\phi_{2})\|_{C^{\frac{1}{4}}(0,T;B^{1}_{2,1}(\partial\Omega))}
	\|\mathbf{a}\|_{H^{\frac{1}{4}}(0,T;L^{2}(\partial\Omega))}
    \\&\quad {+CT^{\frac{1}{2}}\|\eta(\phi_{1})-\eta(\phi_{2})\|_{C^{\frac{1}{3}}(0,T;H^{\frac{5}{6}}(\partial\Omega))}
	\|\mathbf{a}\|_{L^{\infty}(0,T;H^{\frac{1}{2}}(\partial\Omega))}}
	\\&\quad {+CT^{\frac{1}{4}}\|\eta(\phi_{1})-\eta(\phi_{2})\|_{C^{\frac{1}{4}}(0,T;B^{1}_{2,1}(\partial\Omega))}
	\|\mathbf{a}\|_{L^{2}(0,T;H^{\frac{1}{2}}(\partial\Omega))}}
	\\&\leq CT^{\frac{1}{4}}\|\eta(\phi_{1})-\eta(\phi_{2})\|_{C^{\frac{1}{2}}(0,T;H^{1}(\Omega))\cap BUC(0,T;H^{2}(\Omega))}
	\big(\|\mathbf{a}\|_{ H^{\frac{1}{4},\frac{1}{2}}(S_{T})}+\|\mathbf{a}\|_{L^{\infty}(0,T;H^{\frac{1}{2}}(\partial\Omega))}\big),
\end{align*}
for $ \mathbf{a}\in H^{\frac{1}{4},\frac{1}{2}}(S_{T})\cap L^{\infty}(0,T;H^{\frac{1}{2}}(\partial\Omega)) $. { For the above estimate, we also use the following one. 
\begin{align*}
 \|(\eta(\phi_{1})-\eta(\phi_{2}))\mathbf{a}\|_{\dot{H}^{\frac{1}{4}}(0,T;L^{2}(\partial\Omega))}  
 &\leq  CT^{\frac{1}{4}}\|\eta(\phi_{1})-\eta(\phi_{2})\|_{C^{\frac{1}{4}}(0,T;B^{1}_{2,1}(\partial\Omega))}
	\|\mathbf{a}\|_{\dot{H}^{\frac{1}{4}}(0,T;L^{2}(\partial\Omega))}
    \\&\quad +C\|\eta(\phi_{1})-\eta(\phi_{2})\|_{\dot{H}^{\frac{1}{4}}(0,T;H^{\frac{1}{2}}(\partial\Omega))}
	\|\mathbf{a}\|_{L^{\infty}(0,T;H^{\frac{1}{2}}(\partial\Omega))}
 \\&\leq
    	 CT^{\frac{1}{4}}\|\eta(\phi_{1})-\eta(\phi_{2})\|_{C^{\frac{1}{4}}(0,T;B^{1}_{2,1}(\partial\Omega))}
	\|\mathbf{a}\|_{H^{\frac{1}{4}}(0,T;L^{2}(\partial\Omega))}
    \\&\quad +CT^{\frac{1}{2}}\|\eta(\phi_{1})-\eta(\phi_{2})\|_{C^{\frac{1}{3}}(0,T;H^{\frac{5}{6}}(\partial\Omega))}
	\|\mathbf{a}\|_{L^{\infty}(0,T;H^{\frac{1}{2}}(\partial\Omega))},
\end{align*}
which is obtained by using \eqref{frcsobolev-1}, \eqref{HC}, \eqref{sobolev em-01}, \eqref{sobolev em-02} and the similar idea as Lemma \ref{jme6}.}

Moreover, in light of {Lemma \ref{jme3}}, Lemma \ref{jme},  \eqref{HH} and Lemma \ref{jme5}($p=2,s=1$), one deduces
\begin{align*}
   &\|(\mathbf{n}\cdot \nabla^{2} G(\mathrm{div}\mathbf{u}))_{\tau}\|_{ H^{\frac{1}{4},\frac{1}{2}}(S_{T})}
   \\&\leq CT^{\frac{1}{2}}\|(\mathbf{n}\cdot \nabla^{2} G(\mathrm{div}\mathbf{u}))_{\tau}\|_{BUC(0,T;H^{\frac{1}{2}}(\partial\Omega))}+CT^{\frac{1}{2}}\|(\mathbf{n}\cdot \nabla^{2} G(\mathrm{div}\mathbf{u}))_{\tau}\|_{BUC(0,T;L^{2}(\partial\Omega))}
   \\&\quad+\|(\mathbf{n}\cdot \nabla^{2} G(\mathrm{div}\mathbf{u}))_{\tau}\|_{\dot{H}^{\frac{1}{4}}(0,T;L^{2}(\partial\Omega))}
   \\&\leq CT^{\frac{1}{2}}\|(\mathbf{n}\cdot \nabla^{2} G(\mathrm{div}\mathbf{u}))_{\tau}\|_{BUC(0,T;H^{1}(\Omega))}
   + CT^{\frac{1}{2}}\|(\mathbf{n}\cdot \nabla^{2} G(\mathrm{div}\mathbf{u}))_{\tau}\|_{\dot{H}^{\frac{3}{4}}(0,T;L^{2}(\partial\Omega))}
   \\&\leq CT^{\frac{1}{2}}\|\mathbf{u}\|_{X_{T}^{1}}.
  \end{align*}
 We can use again the same argument to get
\begin{align*}
   &\|( \nabla G(\mathrm{div}\mathbf{u}))_{\tau}\|_{ H^{\frac{1}{4},\frac{1}{2}}(S_{T})}
  \leq CT^{\frac{1}{2}}\|\mathbf{u}\|_{X_{T}^{1}},
  \\& \|(\mathbf{n}\cdot \nabla^{2} G(\mathrm{div}\mathbf{u}))_{\tau}\|_{L^{\infty}(0,T;H^{\frac{1}{2}}(\partial\Omega))}\leq C\|\mathbf{u}\|_{X_{T}^{1}}.
  \end{align*}
Hence, there holds
\begin{align*}
   &\|2\eta(\phi_{1})(\mathbf{n}\cdot \nabla^{2} G(\mathrm{div}\mathbf{u}_{1}))_{\tau}-2\eta(\phi_{2})(\mathbf{n}\cdot \nabla^{2} G(\mathrm{div}\mathbf{u}_{1}))_{\tau}\|_{ H^{\frac{1}{4},\frac{1}{2}}(S_{T})}
   \\&\leq\|(2\eta(\phi_{1})-2\eta(\phi_{2}))(\mathbf{n}\cdot \nabla^{2} G(\mathrm{div}\mathbf{u}_{1}))_{\tau}\|_{ H^{\frac{1}{4},\frac{1}{2}}(S_{T})}
   \\&\quad+\|2\eta(\phi_{2})(\mathbf{n}\cdot \nabla^{2} G(\mathrm{div}(\mathbf{u}_{1}-\mathbf{u}_{2}))))_{\tau}\|_{ H^{\frac{1}{4},\frac{1}{2}}(S_{T})}
   \\&\leq C(R)T^{\frac{1}{4}}\|\phi_{1}-\phi_{2}\|_{X_{T}^{2}}+C(R)T^{\frac{1}{4}}\|\mathbf{u}_{1}-\mathbf{u}_{2}\|_{X_{T}^{1}},
  \end{align*}
and similarly we have
\begin{align*}
	\|(a(\phi_{1})\nabla G(\mathrm{div}\mathbf{u}_{1}))_{\tau}-(a(\phi_{2})\nabla G(\mathrm{div}\mathbf{u}_{2}))_{\tau}\|_{ H^{\frac{1}{4},\frac{1}{2}}(S_{T})}
	\leq C(R)T^{\frac{1}{4}}\|(\mathbf{u}_1 - \mathbf{u}_2,\phi_1 - \phi_2)\|_{X_T}.
\end{align*}

By combining the  above estimates  one can give
\begin{equation*}
	\|F_3(\mathbf{u}_1,\phi_1)- F_3(\mathbf{u}_2,\phi_2)\|_{H^{\frac{1}{4},\frac{1}{2}}(S_{T})}
	\leq C(R) T^{\frac{1}{4}} \|(\mathbf{u}_1 - \mathbf{u}_2,\phi_1 - \phi_2)\|_{X_T},
\end{equation*}
which completes the proof together with the control of $ F_1 $ and $ F_2 $.
\end{proof}

After these main ingredients, we now begin to provide a proof of Theorem \ref{main theorem1} with the aid of  Proposition \ref{pjme1}(the proof is defered to be given  in Section \ref{sec:proofoflinear}) and Proposition \ref{pjme}.

{\bf Proof of Theorem \ref{main theorem1}:} Let $\tilde{\phi}\in X_{T}^{2}$ be such that $\|\tilde{\phi}\|_{X_{T}^{2}}\leq 2C(\|\phi_{0}\|_{ H^{2}(\Omega)}+\|\mathbf{u}_{0}\|_{ H^{1}(\Omega)})$ for some fixed constant $C$  and
$$R_0=\max\{2C(\|\phi_{0}\|_{ H^{2}(\Omega)}+\|\mathbf{u}_{0}\|_{ H^{1}(\Omega)}),3\|\mathbb{T}\big(L(\tilde{\phi})\big)^{-1}F(\tilde{\mathbf{v}})\|_{ X_{T}}\},$$
where $\tilde{\mathbf{v}}=(0,\tilde{\phi})$, $\mathbb{T}:X_{T}\rightarrow X_{T}$ is defined by
\begin{equation}
\mathbb{T}\binom{\mathbf{u}}{\phi'}=\binom{\mathbf{u}}{\phi'+\widetilde{\phi}}.\label{def T}
\end{equation}

 We will construct a solution in $B_{R_0}(0)\subset X_{T}$ for sufficiently
small $0 < T \ll 1$ with the aid of the Banach contraction mapping principle. Firstly, we will prove  the invertibility  of operator $L(\phi)$ when  $\phi\in\tilde{X}_{T}^{2}$ which is defined by
$$\tilde{X}_{T}^{2}= BUC(0,T;H^{2}(\Omega))\cap H^{1}(0,T;H^{1}(\Omega))\cap H^{\frac{1}{2}}(0,T;H^{2}(\Omega)).$$

Next, let $\phi_{1},\phi_{2}\in B_{R_0}(0)\subset \tilde{X}_{T}^{2}$ with $\phi_{1}|_{t=0}=\phi_{2}|_{t=0}$. In order to prove that the operator $L(\phi)$ is invertible, we firstly observe that
\begin{align*}
&\|(\mathbf{n}\cdot S(\phi_{1},\mathbb{ D}  P_{\sigma}\mathbf{u}))_{\tau}-(\mathbf{n}\cdot S(\phi_{2},\mathbb{ D}  P_{\sigma}\mathbf{u}))_{\tau}\|_{H^{\frac{1}{4},\frac{1}{2}}(S(T))}
\\&\leq CT^{\frac{1}{8}}\|\phi_{1}-\phi_{2}\|_{\tilde{X}_{T}^{2}}\| \mathbf{u}\|_{ X_{T}^{1}}.
\end{align*}
 In fact,
\begin{align*}
&\|(\mathbf{n}\cdot S(\phi_{1},\mathbb{ D}  P_{\sigma}\mathbf{u}))_{\tau}-(\mathbf{n}\cdot S(\phi_{2},\mathbb{ D}  P_{\sigma}\mathbf{u}))_{\tau}\|_{H^{\frac{1}{4},\frac{1}{2}}(S(T))}
\\&\leq C\|(\mathbf{n}\cdot S(\phi_{1},\mathbb{ D}  P_{\sigma}\mathbf{u}))_{\tau}-(\mathbf{n}\cdot S(\phi_{2},\mathbb{ D}  P_{\sigma}\mathbf{u}))_{\tau}\|_{H^{\frac{1}{2},1}(Q(T))}
\\&\leq CT^{\frac{1}{8}}\|\phi_{1}-\phi_{2}\|_{C^{\frac{1}{8}}(0,T;B^{1}_{3,1}(\partial\Omega))}
\|(\mathbf{n}\cdot \mathbb{ D}  P_{\sigma}\mathbf{u})_{\tau}\|_{L^{2}(0,T;H^{1}(\Omega))}
\\&\quad+CT^{\frac{1}{6}}\|\phi_{1}-\phi_{2}\|_{H^{\frac{2}{3}}(0,T;L^{\infty}(\Omega))}
\|(\mathbf{n}\cdot \mathbb{ D}  P_{\sigma}\mathbf{u})_{\tau}\|_{L^{\infty}(0,T;L^{2}(\Omega))}
\\&\quad+CT^{\frac{1}{8}}\|\phi_{1}-\phi_{2}\|_{C^{\frac{1}{8}}(0,T;L^{\infty}(\Omega))}
\|(\mathbf{n}\cdot \mathbb{ D}  P_{\sigma}\mathbf{u})_{\tau}\|_{H^{\frac{1}{2}}(0,T;L^{2}(\Omega))}
\\&\leq CT^{\frac{1}{8}}\|\phi_{1}-\phi_{2}\|_{\tilde{X}_{T}^{2}}\| \mathbf{u}\|_{ X_{T}^{1}} ,
\end{align*}
where we have employed Lemma \ref{jme6}.

 Thanks to Lemma {\ref{jme2}}, we then obtain
\begin{align}
   &\|L(\phi_{1})\mathbf{v}-L(\phi_{2})\mathbf{v}\|_{ Y_{T}}
  \nonumber\\&\leq C\big(\|\frac{1}{\rho(\phi_{0})}S(\phi_{1},\mathbb{ D}\mathbf{u}) -\frac{1}{\rho(\phi_{0})}S(\phi_{2},\mathbb{ D}\mathbf{u})\|_{ L^{2}(0,T;H^{1}(\Omega))}\nonumber
  \\&\quad+\|\frac{1}{\rho(\phi_{0})}(\tfrac{1}{\alpha}-\phi_{1})\nabla \phi'-\frac{1}{\rho(\phi_{0})}(\tfrac{1}{\alpha}-\phi_{2})\nabla \phi'\|_{ L^{2}(0,T;H^{2}(\Omega))} \nonumber\\&\quad
  +\|(\mathbf{n}\cdot S(\phi_{1},\mathbb{ D}  P_{\sigma}\mathbf{u}))_{\tau}-(\mathbf{n}\cdot S(\phi_{2},\mathbb{ D}  P_{\sigma}\mathbf{u}))_{\tau}\big|_{\partial\Omega}\|_{H^{\frac{1}{4},\frac{1}{2}}(S(T))}\big)\nonumber
  \\&\quad+\|(a(\phi_{1}) P_{\sigma}\mathbf{u})_{\tau}-(a(\phi_{2}) P_{\sigma}\mathbf{u})_{\tau}\|_{ H^{\frac{1}{4},\frac{1}{2}}(S_{T})}
  \nonumber\\& \leq C\|\frac{1}{\rho(\phi_{0})}(S(\phi_{1},\mathbb{ D}\mathbf{u})-S(\phi_{2},\mathbb{ D}\mathbf{u}))\|_{ L^{2}(0,T;H^{1}(\Omega))}
  +\|\frac{1}{\rho(\phi_{0})}(\phi_{1}-\phi_{2})\nabla \phi'\|_{ L^{2}(0,T;H^{2}(\Omega))}
\nonumber\\&\quad+CT^{\frac{1}{8}}\|\phi_{2}-\phi_{1}\|_{\tilde X_{T}^{2}}\| \mathbf{u}\|_{ X_{T}^{1}}
   \nonumber\\& \leq C(R_0)T^{\frac{1}{8}}\|\phi_{2}-\phi_{1}\|_{\tilde X_{T}^{2}}\| \mathbf{v}\|_{ X_{T}}.\label{Lip con}
  \end{align}

Noting that $\phi_{0}\in \tilde{X}_{T}^{2}$,
hence there is some $0 < T_{0} \leq 1$ such that
\begin{align*}
&\|L(\phi)\mathbf{v}-L(\phi_{0})\mathbf{v}\|_{ Y_{T}}\leq\frac{1}{4C}\|\mathbf{v}\|_{ X_{T}}, \quad\text{for all $0<T<T_{0}$, $\|\phi\|_{ X_{T}^{2}}\leq R_0$ },\quad
\\&\|L(\phi_{1})\mathbf{v}-L(\phi_{2})\mathbf{v}\|_{ Y_{T}}\leq\frac{1}{4C}\|\mathbf{v}\|_{ X_{T}},
\quad\text{for all $0<T<T_{0}$, $\|\phi_{j}\|_{ X_{T}^{2}}\leq R_0,j=1,2$ ,}\quad
\end{align*}
where $C=C(T_{0})|_{T_{0}=1}$ in Proposition \ref{pjme1}. This implies that $L(\phi):X_{T}\rightarrow Y_{T}$ is invertible and $\|\big(L(\phi)\big)^{-1}\|_{\mathcal L(X_{T}, Y_{T})}\leq2C$   because of $\|L(\phi_{0})\mathbf{v}\|_{ Y_{T}}\geq\frac{ 1}{C}\|\mathbf{v}\|_{ X_{T}}$ which comes from {Proposition \ref{pjme1}}. Furthermore, due to \eqref{Lip con} one has
\begin{align*}
&\|\big(L(\phi_{1})\big)^{-1}-\big(L(\phi_{2})\big)^{-1}\|_{ \mathcal L(Y_{T}, X_{T})}
\leq 8C^{2}\|L(\phi_{1})-L(\phi_{2})\|_{ \mathcal L(X_{T}, Y_{T})}\leq C(R_0)T^{\frac{1}{8}}\|\phi_{2}-\phi_{1}\|_{ X_{T}^{2}}.
\end{align*}
which further imply that
\begin{align*}
\|\mathbb{T}\big(L(\phi_{1})\big)^{-1}-\mathbb{T}\big(L(\phi_{2})\big)^{-1}\|_{ \mathcal L(Y_{T},X_{T})}
\leq C(R_0)T^{\frac{1}{8}}\|\phi_{1}-\phi_{2}\|_{X_{T}^{2}}.
\end{align*}

Employing the invertibility of operator $L(\phi)$, we  transform the system (\ref{model1-9}) to (\ref{BCs333}) into the following fixed point problem
\begin{align}
\mathbf{v}=\mathbb{T}\big(L(\phi)\big)^{-1}F(\mathbf{v}),\label{fix problem}
\end{align}
 where \begin{equation*}
\mathbf{v}=(\mathbf{u},\phi).
\end{equation*}
Then using Proposition {\ref{pjme}}, we obtain
\begin{align}
\nonumber
&\|\mathbb{T}\big(L(\phi_{1})\big)^{-1}F(\mathbf{v}_{1})
-\mathbb{T}\big(L(\phi_{2})\big)^{-1}F(\mathbf{v}_{2})\|_{ X_{T}}
\\
\nonumber &\leq\|\mathbb{T}\big(L(\phi_{1})\big)^{-1}-\mathbb{T}\big(L(\phi_{2})\big)^{-1}\|_{ \mathcal L(Y_{T},X_{T})}\|F(\mathbf{v}_{1})\|_{ Y_{T}}
\\
\nonumber &\quad+\|\big(L(\phi_{2})\big)^{-1}\|_{\mathcal L(Y_{T}, X_{T})}
\|(F(\mathbf{v}_{1})-F(\mathbf{v}_{2}))\|_{ Y_{T}}
\\
\nonumber &\leq C(R_0)T^{\frac{1}{8}}\|\phi_{1}-\phi_{2}\|_{X_{T}}+ (C(R_0,T)+ C\varepsilon_{0})\|\mathbf{v}_{1}-\mathbf{v}_{2}\|_{X_{T}}\\&\leq \frac{1}{4}\|(\mathbf{v}_{1}-\mathbf{v}_{2},\phi_1-\phi_2)\|_{X_{T}},
\end{align}
when $T$ and $\varepsilon_{0}$  are taken sufficient small for all $\mathbf{v}_{j}=(\mathbf{u}_{j},\phi_{j})\in X_{T} $ with
$\mathbf{v}_{j}|_{t=0}=(\mathbf{u}_{0},\phi_{0})$ and $\|\mathbf{v}_{j}\|_{X_{T}}\leq R_0$. Moreover, from $\|\mathbf{v}\|_{X_{T}}\leq R_0$, $\mathbf{v}|_{t=0}=(\mathbf{u}_{0},\phi_{0})$ and above arguments, we have
\begin{align*}
&\|\mathbb{T}\big(L(\phi)\big)^{-1}F(\mathbf{v})\|_{X_{T}}
\\&\leq\|\mathbb{T}\big(L(\phi)\big)^{-1}F(\mathbf{v})-\mathbb{T}\big(L(\tilde{\phi})\big)^{-1}F(\tilde{\mathbf{v}})\|_{X_{T}}+
\|\mathbb{T}\big(L(\tilde{\phi})\big)^{-1}F(\tilde{\mathbf{v}})\|_{X_{T}}
\\&\leq\frac{1}{4}\cdot \frac{2}{3} R_0+\frac{1}{3} R_0<R_0.
\end{align*}
It follows from  the contraction mapping principle that there is a unique solution $\mathbf{v}=(\mathbf{u},\phi)$ with $\|\mathbf{v}\|_{X_{T}}\leq R_0$ of \eqref{fix problem}. Therefore there is a solution $\mathbf{v}=(\mathbf{u},\phi)\in X_{T}$ of (\ref{model1-9}) to (\ref{BCs333}) with $\|\mathbf{v}\|_{X_{T}}\leq R_0$ and $(\mathbf{u}_{0},\phi_{0})\in (H_{n}^{1}(\Omega), H^{2}_{N}(\Omega))$.

The uniqueness of the solution can be shown by the following argument. We assume that there is another solution $(\hat{\mathbf{u}},\hat{\phi})\in X_{T}$ and $R'=\max(R_0,\|(\hat{\mathbf{u}},\hat{\phi})\|_{X_{T}})$. By the above fixed point argument, we can derive a unique solution $(\check{\mathbf{u}},\check{\phi})\in X_{T'}$ to \eqref{model31} with $\|(\check{\mathbf{u}},\check\phi)\|_{X_{T'}}\leq R'$ where $0<T'\leq T$. Thus $(\mathbf{u},\phi)|_{(0,T')}=(\hat{\mathbf{u}},\hat{\phi})_{(0,T')}= (\check{\mathbf{u}},\check\phi)_{(0,T')}$. With the temporal trace embedding, cf.\cite{MR}, one knows the $ (\mathbf u, \phi)|_{t=T'} \in H_n^1(\Omega) \times H_N^2(\Omega) $. Performing this
argument finitely many times (with a shift in time), we conclude that $(\mathbf{u},\phi)|_{(0,T)}=(\hat{\mathbf{u}},\hat{\phi})_{(0,T)}$.


\section{Proof of Proposition \ref{pjme1}}
\label{sec:proofoflinear}

In this section, we give the proof of Proposition \ref{pjme1} by using the method in \cite{HA1}. Namely, we are devoted to showing the well-posdenss of the corresponding linear system \begin{subequations}
 \label{model-linear-phi0}
\begin{align}
\label{model-linear-phi0-1}
&\partial_t  \mathbf{u} -\mathrm{div}(\tfrac{1}{\rho_{0}}S(\phi_{0},\mathbb{ D}\mathbf{u}))+ \nabla\mathrm{div} (\tfrac{1}{\rho_{0}}(\tfrac{1}{\alpha}-\phi_{0})\nabla \phi')=\mathbf{f}_{1},\quad \text{in } Q_T,\\
\label{model-linear-phi0-2}
&\partial_t \phi'-\tfrac{1}{\alpha} \mathrm{div} \mathbf{u}=f_{2},\quad \text{in } Q_T,
\\ \label{model-linear-phi0-3}
&\mathbf{u} \cdot \mathbf{n}=0, \quad \text{on } \partial \Omega \times (0,T), \\&
\label{model-linear-phi0-4}
\nabla \phi'\cdot \mathbf{n}=\nabla \mu_p \cdot \mathbf{n} =0, \quad \text{on } \partial \Omega \times (0,T),
\\&\label{model-linear-phi0-5}
(\mathbf{n}\cdot2\eta(\phi_{0}) \mathbb{ D}( \mathbf{w}))_{\tau}+(a(\phi_{0})\mathbf{w})_{\tau}=\mathbf{a}, \quad \text{on } \partial \Omega \times (0,T), \\&
\label{model-linear-phi0-6}
(\mathbf{u}, \phi')|_{t=0}=( \mathbf{u}_0, \phi'_{0}), \quad \text{in } \Omega,
\end{align}
where $(\mathbf{f}_{1},f_{2},\mathbf{a},\mathbf{u}_0, \phi'_{0})\in Y_{T}$, $(\mathbf{u}, \phi')\in X_{T}$ and $\mathbf{w}=P_{\sigma}\mathbf{u}$.
\end{subequations}

\subsection{The invertibility of  $L(\phi_{0})$}

Without loss of generality we can assume  $\mathbf{a}=0$ {by Lemma \ref{jme4}}.
Now we reformulate \eqref{model-linear-phi0} above in an appropriate way assuming that $(\mathbf{u}, \phi')\in X_{T}$. Inspired by \cite{HA1}, we would decompose $\mathbf{u}$ into two parts by Helmholtz decomposition. More precisely, applying $P_{\sigma}$ and $I-P_{\sigma}$ to \eqref{model-linear-phi0-1}, we obtain equivalent system of \eqref{model-linear-phi0-1} as
\begin{subequations}
  \label{model12}
  \begin{align}
    \label{model1-92}
    &\partial_t  \mathbf{w} -P_{\sigma}\mathrm{div}(\tfrac{1}{\rho_{0}}S(\phi_{0},\mathbb{ D}\mathbf{w})) -P_{\sigma}\mathrm{div}(\tfrac{1}{\rho_{0}}S(\phi_{0},\mathbb{ D}\nabla G(g)))=P_{\sigma}\mathbf{f}_{1},\\
    \label{model1-102}
    &\partial_t  \nabla G(g) -(I-P_{\sigma})\mathrm{div}(\tfrac{1}{\rho_{0}}S(\phi_{0},\mathbb{ D}\mathbf{u}))+ \nabla\mathrm{div} (\tfrac{1}{\rho_{0}}(\tfrac{1}{\alpha}-\phi_{0})\nabla \phi'))=(I-P_{\sigma})\mathbf{f}_{1},
  \end{align}
\end{subequations}
where $\mathbf{w}=P_{\sigma}\mathbf{u}$, $g=\mathrm{div}\mathbf{u}$ and $\mathbf{u}=\mathbf{w}+\nabla G(g)$.

Note that
  \begin{align*}
    & P_{\sigma}\mathrm{div}(\tfrac{1}{\rho_{0}}S(\phi_{0},\mathbb{ D}\nabla G(g)))\\&=P_{\sigma}\mathrm{div}(\tfrac{1}{\rho_{0}}S(\phi_{0},\nabla^{2} G(g)))
    \\&=P_{\sigma}(\nabla\nu(\phi_{0})\cdot\nabla^{2} G(g))+P_{\sigma}(\nu(\phi_{0})\nabla g)
    +P_{\sigma}(\nabla(\gamma(\phi_{0})g))
     \\&=P_{\sigma}(\nabla\nu(\phi_{0})\cdot\nabla^{2} G(g))+P_{\sigma}(\nabla(\nu(\phi_{0}) g))-P_{\sigma}(\nabla\nu(\phi_{0}) g)
     +P_{\sigma}(\nabla(\gamma(\phi_{0})g))
    \\&=P_{\sigma}(\nabla\nu(\phi_{0})\cdot \nabla^{2} G(g))-P_{\sigma}(\nabla\nu(\phi_{0}) g)\triangleq B_{1}g,
  \end{align*}
where $\nu(\phi_{0})=\frac{2\eta(\phi_{0})}{\rho_{0}}$ and $\gamma(\phi_{0})=-\frac{2}{3}\frac{\eta(\phi_{0})}{\rho_{0}}$.

Moreover, taking $\nabla\varphi$  as the test function in the weak formulation of (\ref{model1-102}), we have
\begin{align*}
    & -\langle\partial_{t}g,\varphi\rangle_{L^{2}(0,T;H_{(0)}^{-1}(\Omega))}-(\mathrm{div}(\tfrac{1}{\rho_{0}}S(\phi_{0},\mathbb{ D}\mathbf{u})),\nabla\varphi)_{L^{2}(0,T;L^{2}(\Omega))}
    \\&\quad+(\nabla\mathrm{div} (\tfrac{1}{\rho_{0}}(\tfrac{1}{\alpha}-\phi_{0})\nabla \phi')),\nabla\varphi)_{L^{2}(0,T;L^{2}(\Omega))}\\&=((I-P_{\sigma})\mathbf{f}_{1},\nabla\varphi)_{L^{2}(0,T;L^{2}(\Omega))},
  \end{align*}
for all $\varphi\in C_{0}^{\infty}(0,T;H_{(0)}^{1}(\Omega))$.

Direct computation leads to
\begin{align*}
     \mathrm{div}(\tfrac{1}{\rho_{0}}S(\phi_{0},\nabla^{2} G(g))&=\nabla\nu(\phi_{0})\cdot\nabla^{2} G(g)+\nu(\phi_{0})\nabla g+\nabla(\gamma(\phi_{0})g)
    \\&=\nabla\nu(\phi_{0})\cdot\nabla^{2} G(g)+\nabla(b(\phi_{0}) g)-\nabla\nu(\phi_{0}) g,
  \end{align*}
where $b(\phi_{0})=\nu(\phi_{0})+\gamma(\phi_{0})$. Hence we obtain
\begin{align*}
    & (\mathrm{div}(\tfrac{1}{\rho_{0}}S(\phi_{0},\mathbb{ D}\nabla G(g)),\nabla\varphi) \\&=-(\Delta_{N}(b(\phi_{0}) g),\varphi)_{H_{(0)}^{-1}(\Omega),H_{(0)}^{1}(\Omega)}
  \\&\quad-(\mathrm{div}_{n}(\nabla\nu(\phi_{0})\cdot\nabla^{2} G(g)),\varphi)_{H_{(0)}^{-1},H_{(0)}^{1}}  
  +(\mathrm{div}_{n}(\nabla\nu(\phi_{0}) g),\varphi)_{H_{(0)}^{-1}(\Omega),H_{(0)}^{1}(\Omega)}
  \\&\triangleq-(\Delta_{N}(b(\phi_{0}) g),\varphi)_{H_{(0)}^{-1}(\Omega),H_{(0)}^{1}(\Omega)}+(B_{2}g,\varphi)_{H_{(0)}^{-1}(\Omega),H_{(0)}^{1}(\Omega)}.
  \end{align*}

We then reformulate \eqref{model-linear-phi0-2}-(\ref{model1-102}) as
\begin{align*}
\partial_{t}\mathbf{v}+\mathbf{A}\mathbf{v}+\mathbf{B}\mathbf{v}=
\left(\begin{array}{c}
f_{2} \\
\mathrm{div}_{n}(I-P_{\sigma})\mathbf{f}_{1}
\\
P_{\sigma}\mathbf{f}_{1}
\end{array}\right)\triangleq\mathbf{f},
\end{align*}
with
\begin{align*}
\mathbf{v}|_{t=0}=
(\phi'_{0},g_{0},\mathbf{w}_{0})^T,
\end{align*}
where $\mathbf{v}=(\phi',g,\mathbf{w})^{T}$, $g_{0}=\mathrm{div} \mathbf{u}_{0}$, $\mathbf{u}_{0}=\mathbf{w}_{0}+\nabla G(g_{0})$. Moreover, the operators above are defined through
\begin{align*}
\mathbf{A}\mathbf{v}=\begin{pmatrix}A_{1} &A_{2} \\0 & -{P_{\sigma}\mathrm{div}(\tfrac{1}{\rho_{0}}S(\phi_{0},\mathbb{ D}}))\end{pmatrix}
\left(\begin{array}{c}
(\phi',g)^{T} \\
\mathbf{w}
\end{array}\right),
\end{align*}
where we have
\begin{align*}
A_{1}=\begin{pmatrix}0 &-\tfrac{1}{\alpha}P_{0} \\ \Delta_{N}\mathrm{div} (\tfrac{1}{\rho_{0}}(\tfrac{1}{\alpha}-\phi_{0})\nabla\cdot) & -\Delta_{N}(b(\phi_{0}) \cdot)\end{pmatrix},
\quad
A_{2}\mathbf{w}=
\left(\begin{array}{c}
0 \\
-\mathrm{div}_{n}\mathrm{div}(\tfrac{1}{\rho_{0}}S(\phi_{0},\mathbb{ D}))
\end{array}\right),
\end{align*}
and
\begin{align*}
\mathbf{B}\mathbf{v}=(0,B_{2}g,-B_{1}g)^T.
\end{align*}
The domains of $\mathbf{A},\mathbf{B}$ are
\begin{align*}
D(\mathbf{A})=D(\mathbf{B})=\{(\phi',g, \mathbf{w})^{T}:(\phi',g)^{T}\in D(A_{1}), \mathbf{w} \in H^{2}(\Omega)\cap L_{\sigma}^{2}(\Omega):T_{a}\mathbf{w}=0\},
\end{align*}
with
\begin{align*}
	D(A_{1})= (H^{3}(\Omega)\cap H_{N}^{2}(\Omega))\times H_{(0)}^{1}(\Omega),
\end{align*}
where
$$T_{a}\mathbf{w}=(\mathbf{n}\cdot S(\phi_{0},\mathbb{ D}\mathbf{w}))_{\tau}+(a(\phi_{0})\mathbf{w})_{\tau}\big|_{\partial\Omega}.$$
It is noticed that $\mathbf{A},\mathbf{B}$ are unbounded operators on $H$
\begin{align*}
&H=H_{1}\times L_{\sigma}^{2}(\Omega),
\end{align*}
where
$H_{1}\coloneqq H^{1}(\Omega)\times H^{-1}_{(0)}(\Omega)$.

In the following we give the boundedness of $B_i$, $i = 1,2$ operators.
\begin{Lemma}\label{lemma4.1}
Given $\phi_{0}\in H^{2}(\Omega)$, then there exist constant  $C(\phi_{0})$ such that for all $g\in H^{1}_{(0)}(\Omega)$, there hold
\begin{align}
&\label{lemma4.11}\|B_{1}(g)\|_{L^{2}(\Omega)}\leq C(\phi_{0})\|g\|_{H^{\frac{1}{2}}(\Omega)},
\\&\label{lemma4.12} \|B_{2}(g)\|_{H^{-1}_{(0)}(\Omega)}\leq C(\phi_{0})\|g\|_{H^{\frac{1}{2}}(\Omega)}.
\end{align}

\end{Lemma}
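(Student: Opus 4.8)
The plan is to reduce both inequalities to a single $L^{2}$-estimate. The structural point is that $B_{1}g$ and $B_{2}g$ are built from the very same two quantities, namely
\begin{align*}
B_{1}g &= P_{\sigma}\big(\nabla\nu(\phi_{0})\cdot\nabla^{2} G(g)\big)-P_{\sigma}\big(\nabla\nu(\phi_{0})\, g\big),\\
B_{2}g &= -\,\mathrm{div}_{n}\big(\nabla\nu(\phi_{0})\cdot\nabla^{2} G(g)\big)+\mathrm{div}_{n}\big(\nabla\nu(\phi_{0})\, g\big),
\end{align*}
where $G=\Delta_{N}^{-1}$ and $\nu(\phi_{0})=2\eta(\phi_{0})/\rho_{0}$. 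Since $P_{\sigma}$ is bounded on $L^{2}(\Omega)$ and $\mathrm{div}_{n}\colon L^{2}(\Omega)\to H^{-1}_{(0)}(\Omega)$ is bounded with norm $\le1$ (directly from its definition, as $|\langle\mathrm{div}_{n}f,\varphi\rangle|=|(f,\nabla\varphi)_{L^{2}}|\le\|f\|_{L^{2}}\|\varphi\|_{H^{1}_{(0)}}$), both \eqref{lemma4.11} and \eqref{lemma4.12} will follow once I prove
\begin{align*}
\|\nabla\nu(\phi_{0})\cdot\nabla^{2} G(g)\|_{L^{2}(\Omega)}+\|\nabla\nu(\phi_{0})\, g\|_{L^{2}(\Omega)}\le C(\phi_{0})\,\|g\|_{H^{1/2}(\Omega)}.
\end{align*}

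First I would record the regularity of the coefficient: $\nabla\nu(\phi_{0})\in L^{6}(\Omega)$ with norm bounded by $C(\phi_{0})$. Indeed $\eta\in C^{2}(\mathbb{R})$ and $\phi_{0}\in H^{2}(\Omega)\hookrightarrow L^{\infty}(\Omega)$ (recall $d\le3$), while $\rho_{0}=\tfrac{\varepsilon}{2}\phi_{0}+1+\tfrac{\varepsilon}{2}$ is affine in $\phi_{0}$ with $\rho_{0}\ge\tfrac14$; the chain rule, $H^{1}(\Omega)\hookrightarrow L^{6}(\Omega)$, and the fact that $H^{2}(\Omega)$ is a Banach algebra for $d\le3$ then give $\nu(\phi_{0})=2\eta(\phi_{0})/\rho_{0}\in H^{2}(\Omega)$, whence $\nabla\nu(\phi_{0})\in H^{1}(\Omega)\hookrightarrow L^{6}(\Omega)$. (In fact only $\nabla\nu(\phi_{0})=[\text{bounded}]\cdot\nabla\phi_{0}\in L^{6}(\Omega)$ is used.)

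Next I would upgrade the elliptic regularity of $G$ to a fractional order. By the estimates for the weak Neumann problem recalled in Section~\ref{sec:preliminaries}, $\nabla^{2} G\colon L^{2}_{(0)}(\Omega)\to L^{2}(\Omega)$ is bounded, and so is $\nabla^{2} G\colon H^{1}_{(0)}(\Omega)\to L^{6}(\Omega)$ (using $G(g)\in H^{3}(\Omega)$ and $H^{1}(\Omega)\hookrightarrow L^{6}(\Omega)$). Complex interpolation then yields $\nabla^{2} G\colon H^{1/2}(\Omega)\cap L^{2}_{(0)}(\Omega)\to L^{3}(\Omega)$ boundedly, since $(L^{2}(\Omega),L^{6}(\Omega))_{[1/2]}=L^{3}(\Omega)$ while $(L^{2}_{(0)}(\Omega),H^{1}_{(0)}(\Omega))_{[1/2]}=H^{1/2}(\Omega)\cap L^{2}_{(0)}(\Omega)$, interpolation commuting with the zero-mean projection $P_{0}$ (equivalently, $\|G(g)\|_{H^{5/2}(\Omega)}\le C\|g\|_{H^{1/2}(\Omega)}$, via \eqref{ae}). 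In particular $\|\nabla^{2} G(g)\|_{L^{3}(\Omega)}\le C\|g\|_{H^{1/2}(\Omega)}$ for $g\in H^{1}_{(0)}(\Omega)$, and the three-dimensional embedding $H^{1/2}(\Omega)\hookrightarrow L^{3}(\Omega)$ gives also $\|g\|_{L^{3}(\Omega)}\le C\|g\|_{H^{1/2}(\Omega)}$.

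Putting the pieces together, Hölder's inequality on the exponents $\tfrac16+\tfrac13=\tfrac12$ gives
\begin{align*}
\|\nabla\nu(\phi_{0})\cdot\nabla^{2} G(g)\|_{L^{2}}&\le\|\nabla\nu(\phi_{0})\|_{L^{6}}\,\|\nabla^{2} G(g)\|_{L^{3}}\le C(\phi_{0})\,\|g\|_{H^{1/2}(\Omega)},\\
\|\nabla\nu(\phi_{0})\, g\|_{L^{2}}&\le\|\nabla\nu(\phi_{0})\|_{L^{6}}\,\|g\|_{L^{3}}\le C(\phi_{0})\,\|g\|_{H^{1/2}(\Omega)},
\end{align*}
which is the required $L^{2}$-estimate and hence \eqref{lemma4.11}--\eqref{lemma4.12}. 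I expect the only point demanding care to be the interpolation step that produces $\|\nabla^{2} G(g)\|_{L^{3}}\lesssim\|g\|_{H^{1/2}}$ (namely half a derivative more than the trivial bound $\|\nabla^{2} G(g)\|_{L^{2}}\lesssim\|g\|_{L^{2}}$): one must keep track of the zero-mean constraint throughout, so that $G$ stays well defined and the Neumann condition $\partial_{n}G(g)=0$ is preserved along the interpolation scale; this is precisely why I would interpolate the endpoint mappings with ranges in the plain Lebesgue spaces $L^{2}(\Omega)$ and $L^{6}(\Omega)$ and, on the domain side, phrase everything on the couple $(L^{2}_{(0)}(\Omega),H^{1}_{(0)}(\Omega))$, invoking the commutation of interpolation with $P_{0}$. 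All remaining estimates are routine.
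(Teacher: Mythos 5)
Your proof is correct and follows essentially the same route as the paper: bound $P_{\sigma}$ on $L^{2}$ and $\mathrm{div}_{n}\colon L^{2}\to H^{-1}_{(0)}$, put $\nabla\nu(\phi_{0})\in L^{6}$, and close with H\"older ($L^{6}\cdot L^{3}\subset L^{2}$) using $\|\nabla^{2}G(g)\|_{L^{3}}+\|g\|_{L^{3}}\lesssim\|g\|_{H^{1/2}}$. The only cosmetic difference is that you derive the fractional elliptic bound for $G$ by interpolating the integer-order endpoints, whereas the paper simply invokes $\Delta_{N}^{-1}\colon H^{s}\to H^{s+2}$ for $0\le s\le 1$ together with Sobolev embedding; these are equivalent, as you yourself note.
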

\begin{proof}
For (\ref{lemma4.11}), by the property $\Delta_{N}^{-1}:H^{s}\rightarrow H^{s+2}(0\leq s\leq1)$ and the Sobolev embedding $H^{2}\hookrightarrow L^{\infty}$, we have
\begin{align*}
\|B_{1}g\|_{L^{2}_{\sigma}(\Omega)}&=\|P_{\sigma}(\nabla\nu(\phi_{0})\cdot \nabla^{2} G(g))+P_{\sigma}(\nabla\nu(\phi_{0}) g)\|_{L^{2}(\Omega)}
\\& \leq C\|\nabla\nu(\phi_{0})\|_{L^{6}(\Omega)}\|\nabla^{2} G(g))\|_{L^{3}(\Omega)}+C\|\nabla\nu(\phi_{0})\|_{L^{6}(\Omega)}\| g\|_{L^{3}(\Omega)}
\\& \leq C(\phi_{0})\|g\|_{H^{\frac{1}{2}}(\Omega)}.
\end{align*}
 (\ref{lemma4.12}) can be obtained by similar arguments, in fact, we have
\begin{align*}
 \|B_{2}g\|_{H^{-1}_{(0)}(\Omega)}&\leq \|\nabla\nu(\phi_{0})\cdot\nabla^{2} G(g)+\nabla\nu(\phi_{0}) g\|_{L^{2}(\Omega)}
\\& \leq \|\nabla\nu(\phi_{0})\|_{L^{6}(\Omega)}\|\nabla^{2} G(g)\|_{L^{3}(\Omega)}+\|\nabla\nu(\phi_{0}) \|_{L^{6}(\Omega)}\| g\|_{L^{3}(\Omega)}
\\& \leq  C(\phi_{0})\|g\|_{H^{\frac{1}{2}}(\Omega)}.
\end{align*}
\end{proof}

Next we use the triangle structure of $\mathbf{A}$ to prove  that $-\mathbf{A}$ generate an analytic semigroup if $-A_{1}$ and $P_{\sigma}\mathrm{div}(\tfrac{1}{\rho_{0}}S(\phi_{0},\cdot))$ generate analytic semigroups.

{The following are three important conclusions which follows similarly as in \cite{HA1}. We record the result below and omit the proof.}
\begin{Lemma}\label{lemma4.2}
$-A_{1}$ produces a bounded analytic semigroup. In addition, the norm {$\|A_{1}(\phi',g)^{T}\|_{L^{2}(\Omega)}+|m(\phi')|$} and $\|\phi'\|_{H^{3}(\Omega)}+\|g\|_{H^{1}(\Omega)}$ are equivalent.
\end{Lemma}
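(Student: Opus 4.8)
The idea is that, once $g$ is eliminated, the pair $(\phi',g)$ obeys a fourth-order plate equation with \emph{critical structural damping}, so that $-A_1$ is, up to a bounded similarity and a trivial one-dimensional summand, the companion operator of $\partial_{tt}\phi'+\mathcal{C}\phi'+\mathcal{D}\,\partial_t\phi'=0$ with damping $\mathcal{D}$ comparable to $\mathcal{C}^{1/2}$; analyticity then follows from the classical structural-damping argument, carried out as for the analogous operator in \cite{HA1}, while the norm equivalence is pure elliptic regularity. \emph{Reduction.} For $v=(\phi',g)^{T}\in D(A_1)$ the first row of $A_1$ reads $\partial_t\phi'=\tfrac1\alpha g$, i.e. $g=\alpha\partial_t\phi'$; substituting into the second row turns \eqref{model-linear-phi0-2}--\eqref{model1-102} into $\partial_{tt}\phi'+\mathcal{C}\phi'+\mathcal{D}\,\partial_t\phi'=(\text{data})$ with
\[
\mathcal{C}:=\tfrac1\alpha\,\Delta_{N}\,\mathrm{div}\!\Big(\tfrac1{\rho_{0}}\big(\tfrac1\alpha-\phi_{0}\big)\nabla\,\cdot\,\Big),\qquad
\mathcal{D}:=-\Delta_{N}\big(b(\phi_{0})\,\cdot\,\big),
\]
where $b(\phi_0)=\nu(\phi_0)+\gamma(\phi_0)$ and, by Assumption \ref{main assumption} together with $\rho_0$ bounded and bounded away from $0$ (cf. \eqref{model2-5}) and $|\phi_0|\le\varepsilon_0$ small, both $b(\phi_0)$ and $\tfrac1{\rho_0}(\tfrac1\alpha-\phi_0)$ are bounded and strictly positive. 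Since $(\phi',g)\mapsto(\phi',\tfrac1\alpha g)$ is a bounded automorphism of $H_1$, we may replace $A_1$ by the companion matrix $\mathcal{M}=\left(\begin{smallmatrix}0&-I\\ \mathcal{C}&\mathcal{D}\end{smallmatrix}\right)$; moreover $\{(c,0):c\in\mathbb{R}\}\subset\ker A_1$ is complemented in $H_1$ by the $A_1$-invariant subspace $H_{(0)}^{1}(\Omega)\times H_{(0)}^{-1}(\Omega)$ (projection $(\phi',g)\mapsto(P_0\phi',g)$), on which $e^{-tA_1}=\mathrm{Id}$, so it suffices to study $\mathcal{M}$ on $H_{(0)}^{1}(\Omega)\times H_{(0)}^{-1}(\Omega)$.

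\emph{Analyticity.} Regard $\mathcal{C},\mathcal{D}$ as operators on $X:=H_{(0)}^{-1}(\Omega)$ with the inner product recalled above. By \eqref{HLH} and integration by parts (using $\partial_n\phi'=0$), for mean-zero $u$ one has $\langle\mathcal{C}u,u\rangle_{X}=\tfrac1\alpha\int_\Omega\tfrac1{\rho_0}(\tfrac1\alpha-\phi_0)|\nabla u|^2\,\mathrm dx$ and $\langle\mathcal{D}u,u\rangle_{X}=\int_\Omega b(\phi_0)\,u^2\,\mathrm dx$. Hence $\mathcal{C}$ is self-adjoint and positive definite on $X$ with form domain $H_{(0)}^{1}(\Omega)$, so $D(\mathcal{C}^{1/2})=H_{(0)}^{1}(\Omega)$ and $\|\mathcal{C}^{1/2}u\|_X\approx\|u\|_{H^1}$; likewise $\mathcal{D}$ is self-adjoint positive definite with $D(\mathcal{D})=H_{(0)}^{1}(\Omega)$ and $\|\mathcal{D}u\|_{X}=\|\Delta_N(b(\phi_0)u)\|_{H^{-1}_{(0)}}\approx\|b(\phi_0)u\|_{H^1}\approx\|u\|_{H^1}\approx\|\mathcal{C}^{1/2}u\|_X$. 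Thus $\mathcal{D}$ is comparable to $\mathcal{C}^{1/2}$, i.e. we are in the critical ($\theta=\tfrac12$) structural-damping case; running the argument of \cite{HA1} --- testing the scalar resolvent equation $\lambda^2u+\lambda\mathcal{D}u+\mathcal{C}u=F$ in the $X$-pairing and separating real and imaginary parts --- yields $\|(\lambda+\mathcal{M})^{-1}\|\le C|\lambda|^{-1}$ on a sector $|\arg\lambda|<\tfrac\pi2+\delta$, so $-\mathcal{M}$ generates an analytic semigroup on $D(\mathcal{C}^{1/2})\times X=H_{(0)}^{1}(\Omega)\times H_{(0)}^{-1}(\Omega)$. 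Boundedness follows because $-\mathcal{M}$ is accretive in the energy inner product $\langle(u_1,v_1),(u_2,v_2)\rangle_E:=\langle\mathcal{C}^{1/2}u_1,\mathcal{C}^{1/2}u_2\rangle_X+\langle v_1,v_2\rangle_X$ --- indeed $\mathrm{Re}\,\langle\mathcal{M}(u,v),(u,v)\rangle_E=\langle\mathcal{D}v,v\rangle_X\ge0$ --- and since the action on the complementary constants is the identity. Hence $-A_1$ generates a bounded analytic semigroup on $H_1$.

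\emph{Norm equivalence.} For $v=(\phi',g)^T\in D(A_1)$ the bound $\|A_1v\|_{H_1}\lesssim\|\phi'\|_{H^3}+\|g\|_{H^1}$ is immediate from $\|\Delta_Nw\|_{H^{-1}_{(0)}}\lesssim\|w\|_{H^1}$, the multiplier algebra property of $H^2(\Omega)$ in dimension three (Assumption \ref{main assumption}, $\phi_0\in H^2$), and $\|\mathrm{div}(\tfrac1{\rho_0}(\tfrac1\alpha-\phi_0)\nabla\phi')\|_{H^1}\lesssim\|\phi'\|_{H^3}$. Conversely, $(A_1v)_1=-\tfrac1\alpha g$ controls $\|g\|_{H^1}$ (recall $g\in H_{(0)}^{1}(\Omega)$), and then $\Delta_N\mathrm{div}(\tfrac1{\rho_0}(\tfrac1\alpha-\phi_0)\nabla\phi')=(A_1v)_2+\Delta_N(b(\phi_0)g)$ is bounded in $H_{(0)}^{-1}(\Omega)$; since $\mathrm{div}(\tfrac1{\rho_0}(\tfrac1\alpha-\phi_0)\nabla\phi')$ has zero mean ($\partial_n\phi'=0$), the isomorphism $\Delta_N\colon H_{(0)}^{1}(\Omega)\to H_{(0)}^{-1}(\Omega)$ puts it in $H^1$, and the $H^3$-regularity estimate for the strongly elliptic conormal problem $\mathrm{div}(\tfrac1{\rho_0}(\tfrac1\alpha-\phi_0)\nabla\,\cdot\,)$, $\partial_n\,\cdot\,|_{\partial\Omega}=0$, finally bounds $\|\phi'\|_{H^3}$ by $\|A_1v\|_{H_1}+\|v\|_{H_1}$. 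Combined, this gives the stated equivalence between (the graph norm of) $A_1$ and $\|\phi'\|_{H^3(\Omega)}+\|g\|_{H^1(\Omega)}$ on $D(A_1)$, the constant part of $\phi'$ lying in $\ker A_1$ being accounted for by $\|v\|_{H_1}$.

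\emph{Main obstacle.} The only genuinely delicate point is analyticity at the borderline damping exponent $\theta=\tfrac12$: unlike the range $\theta\in(\tfrac12,1]$, the resolvent bound near the imaginary axis is tight and must be extracted by the two-parameter (real/imaginary part) test, using precisely the two-sided comparability of $\mathcal{D}$ and $\mathcal{C}^{1/2}$ and keeping the split of $\phi'$ into its constant and mean-zero parts under control; everything else is routine elliptic regularity together with the weak Neumann--Laplace calculus recalled in Section \ref{sec:preliminaries}.
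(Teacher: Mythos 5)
Your proposal follows essentially the same route as the paper's proof: restrict to the mean-zero subspace $H^{1}_{(0)}(\Omega)\times H^{-1}_{(0)}(\Omega)$ (the constants lying in $\ker A_{1}$ are handled trivially), recognize the companion structure of a second-order equation with structural damping, verify in the $H^{-1}_{(0)}$ inner product that the stiffness and damping operators are positive self-adjoint with the damping comparable to the square root of the stiffness (the paper phrases this as the hypotheses H.1--H.3 with $\beta=\tfrac12$, comparing both operators with powers of $-\Delta_{N}$ via \cite{SGK}), and obtain the norm equivalence from invertibility plus $H^{3}$ elliptic regularity for the conormal problem with $H^{2}$ coefficient --- the paper cites Lemma 4 of \cite{HA12} for exactly the regularity step you assert. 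The one point to correct is your justification of analyticity at the critical exponent: the claim that testing $\lambda^{2}u+\lambda\mathcal{D}u+\mathcal{C}u=F$ and separating real and imaginary parts ``yields'' the sectorial bound $\|(\lambda+\mathcal{M})^{-1}\|\leq C|\lambda|^{-1}$ is an oversimplification. At $\beta=\tfrac12$ that single test loses a factor of $|\lambda|$ (one only gets, say, $\|u\|_{H^{-1}_{(0)}}\lesssim \|F\|_{H^{-1}_{(0)}}/|\lambda|$ where $\|F\|/|\lambda|^{2}$ is needed for the energy-space estimate), and closing the estimate near the imaginary axis is precisely the nontrivial content of the Chen--Triggiani theorem; the paper concludes by invoking Theorem 1.1 of \cite{SPC} after checking H.1--H.3 (including the compact resolvent, which you do not mention), rather than by an elementary two-parameter test. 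So the plan is sound, but that step should be quoted from \cite{SPC} (or an equivalent structural-damping result) rather than presented as routine. On the other hand, your observation that the stated equivalence must be read as a graph-norm equivalence, since constants belong to $\ker A_{1}$, is a precision the paper itself glosses over, and your accretivity computation in the energy inner product is a clean way to get boundedness of the semigroup.
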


\begin{Proposition}\label{Proposition4.3}
 $-\mathbf{A}$ generates a bounded analytic semigroup on $H$ and $\|\mathbf{A}\mathbf{v}\|_{H}+\|\mathbf{v}\|_{H}$ is equivalent to $\|\phi'\|_{H^{3}(\Omega)}+\|g\|_{H^{1}(\Omega)}+\|\mathbf{w}\|_{H^{2}(\Omega)}$.
\end{Proposition}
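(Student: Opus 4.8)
The plan is to exploit the block lower-triangular structure of $\mathbf{A}$, writing
\[
\mathbf{A}=\begin{pmatrix} A_1 & A_2 \\ 0 & -P_{\sigma}\mathrm{div}(\tfrac{1}{\rho_0}S(\phi_0,\cdot))\end{pmatrix}
\]
on $H=H_1\times L^2_\sigma(\Omega)$, and to combine the generation results for the two diagonal blocks with a perturbation argument to handle the off-diagonal coupling $A_2$. By Lemma \ref{lemma4.2}, $-A_1$ generates a bounded analytic semigroup on $H_1=H^1(\Omega)\times H^{-1}_{(0)}(\Omega)$, and $\|A_1(\phi',g)^T\|_{H_1}+\|(\phi',g)\|_{H_1}\approx\|\phi'\|_{H^3(\Omega)}+\|g\|_{H^1(\Omega)}$. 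By Lemma \ref{Appendix A} (the Stokes operator with Navier boundary conditions in the appendix), $P_{\sigma}\mathrm{div}(\tfrac{1}{\rho_0}S(\phi_0,\cdot))$ generates a bounded analytic semigroup on $L^2_\sigma(\Omega)$ with domain $\{\mathbf{w}\in H^2(\Omega)\cap L^2_\sigma(\Omega):T_a\mathbf{w}=0\}$, and the graph norm is equivalent to $\|\mathbf{w}\|_{H^2(\Omega)}$.

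First I would fix a sector $\Sigma_{\pi/2+\delta}$ on which both diagonal resolvents $(\lambda+A_1)^{-1}$ and $(\lambda-P_{\sigma}\mathrm{div}(\tfrac{1}{\rho_0}S(\phi_0,\cdot)))^{-1}$ exist and satisfy the standard decay $\|(\lambda+\cdot)^{-1}\|\le C/|\lambda|$ (with the finitely many exceptional $\lambda$ near $0$ absorbed by a shift, if necessary). For $\lambda$ in this sector the resolvent of $\mathbf{A}$ is given by the Schur-type formula
\[
(\lambda+\mathbf{A})^{-1}=\begin{pmatrix}(\lambda+A_1)^{-1} & -(\lambda+A_1)^{-1}A_2(\lambda-L_S)^{-1}\\ 0 & (\lambda-L_S)^{-1}\end{pmatrix},
\]
where $L_S=P_{\sigma}\mathrm{div}(\tfrac{1}{\rho_0}S(\phi_0,\cdot))$. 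The diagonal entries are $O(1/|\lambda|)$ by the two lemmas, so the only thing to check is that the off-diagonal term is $O(1/|\lambda|)$ as well. Since $A_2\mathbf{w}=(0,-\mathrm{div}_n\mathrm{div}(\tfrac{1}{\rho_0}S(\phi_0,\mathbf{w})))^T$ maps $H^2(\Omega)\cap L^2_\sigma(\Omega)$ into $H^1(\Omega)\times H^{-1}_{(0)}(\Omega)=H_1$ boundedly, and $(\lambda+A_1)^{-1}$ is $O(1/|\lambda|)$ from $H_1$ to $H_1$ while $(\lambda-L_S)^{-1}$ is bounded from $L^2_\sigma$ into itself, we get the bound $\|(\lambda+A_1)^{-1}A_2(\lambda-L_S)^{-1}\|\le C/|\lambda|$. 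Hence $(\lambda+\mathbf{A})^{-1}$ satisfies the sectorial resolvent estimate and $-\mathbf{A}$ generates a bounded analytic semigroup on $H$. The norm equivalence $\|\mathbf{A}\mathbf{v}\|_H+\|\mathbf{v}\|_H\approx\|\phi'\|_{H^3(\Omega)}+\|g\|_{H^1(\Omega)}+\|\mathbf{w}\|_{H^2(\Omega)}$ then follows by reading off the two diagonal equivalences together with the boundedness of $A_2$: one direction is immediate from the block action of $\mathbf{A}$, and the reverse comes from the invertibility of $A_1|_{H_1}$ and of $L_S$ (modulo a shift), so that $(\phi',g)$ and $\mathbf{w}$ are controlled in their respective higher-order norms by $\mathbf{A}\mathbf{v}$ and $\mathbf{v}$.

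The step I expect to be the main (though modest) obstacle is making the off-diagonal estimate uniform in $\lambda$ down to the correct power $1/|\lambda|$: one must be careful that $A_2$ genuinely lands in $H_1$ with the right regularity and that no loss of a power of $|\lambda|$ occurs when composing the two resolvents — in particular that $(\lambda+A_1)^{-1}$ is used as a map \emph{into} $H_1$ (gaining $1/|\lambda|$) rather than into the domain $D(A_1)$. A secondary point is the bookkeeping of the shift needed to ensure both diagonal operators are boundedly invertible on the chosen sector; since both $-A_1$ and $-L_S$ are already known to generate \emph{bounded} analytic semigroups, this shift is only needed for the norm-equivalence part and can be removed afterwards by the invertibility statements in Lemma \ref{lemma4.2} and Lemma \ref{Appendix A}.
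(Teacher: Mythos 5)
Your proposal is correct and follows essentially the same route as the paper: the same lower-triangular Schur-type resolvent formula, the diagonal blocks handled by Lemma \ref{lemma4.2} and by the positive, self-adjoint, invertible operator $A_{a}=-P_{\sigma}\mathrm{div}(\frac{1}{\rho_{0}}S(\phi_{0},\cdot))$ from Lemma \ref{Appendix A}, the $C/|\lambda|$ decay of the off-diagonal block extracted from $(\lambda+A_{1})^{-1}$, and the norm equivalence read off from invertibility of the diagonal operators. The only loose phrase is that boundedness of $(\lambda-L_{S})^{-1}=(\lambda+A_{a})^{-1}$ ``into itself'' is not what the composition with $A_{2}$ requires — one needs uniform boundedness from $L^{2}_{\sigma}(\Omega)$ into $H^{2}(\Omega)$, which follows, exactly as in the paper, from $\|A_{a}(\lambda+A_{a})^{-1}\|\leq C_{\delta}$ together with the equivalence of $\|\mathbf{w}\|_{H^{2}(\Omega)}$ and $\|A_{a}\mathbf{w}\|_{L^{2}_{\sigma}(\Omega)}$ — but you flag precisely this point yourself and all the needed ingredients are already stated in your write-up.
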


\begin{Proposition}\label{Proposition4.4}
Given $\phi_{0}\in H^{2}(\Omega)$, then $-(\mathbf{A}+\mathbf{B})$ generates an analytic semigroup. Moreover we deduce that $\|(\mathbf{A}+\mathbf{B})\mathbf{v}\|_{H}+\|\mathbf{v}\|_{H}$
 and $\|\phi'\|_{H^{3}(\Omega)}+\|g\|_{H^{1}(\Omega)}+\|\mathbf{w}\|_{H^{2}(\Omega)}$ are equivalent.
\end{Proposition}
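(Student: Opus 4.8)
The plan is to regard $\mathbf{B}$ as a lower-order perturbation of $\mathbf{A}$ that is $\mathbf{A}$-bounded with relative bound zero, and then to close the argument by a Neumann-series resolvent estimate built on Proposition~\ref{Proposition4.3}. The norm equivalence will then follow formally from the relative-boundedness estimate.

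First I would establish the relative bound. For $\mathbf{v}=(\phi',g,\mathbf{w})^{T}\in D(\mathbf{A})=D(\mathbf{B})$, Lemma~\ref{lemma4.1} gives
\begin{align*}
\|\mathbf{B}\mathbf{v}\|_{H}=\|(0,B_{2}g,-B_{1}g)^{T}\|_{H}\leq C(\phi_{0})\|g\|_{H^{\frac{1}{2}}(\Omega)}.
\end{align*}
Since in $H$ the component $g$ lives in $H^{-1}_{(0)}(\Omega)$ while $\|g\|_{H^{1}(\Omega)}$ is controlled by the graph norm of $\mathbf{A}$ (Proposition~\ref{Proposition4.3}), I interpolate $H^{\frac12}=(H^{-1}_{(0)}(\Omega),H^{1}_{(0)}(\Omega))_{[\frac34]}$ to get $\|g\|_{H^{1/2}}\leq C\|g\|_{H^{-1}_{(0)}}^{1/4}\|g\|_{H^{1}}^{3/4}$, and then Young's inequality yields, for every $\eta>0$,
\begin{align*}
\|\mathbf{B}\mathbf{v}\|_{H}\leq \eta\|g\|_{H^{1}(\Omega)}+C_{\eta}(\phi_{0})\|g\|_{H^{-1}_{(0)}(\Omega)}\leq \eta C_{0}\big(\|\mathbf{A}\mathbf{v}\|_{H}+\|\mathbf{v}\|_{H}\big)+C_{\eta}(\phi_{0})\|\mathbf{v}\|_{H}.
\end{align*}
Taking $\eta$ small makes the relative bound as small as we wish; in particular $\mathbf{A}+\mathbf{B}$ with domain $D(\mathbf{A})$ is a closed operator.

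Next I would run the resolvent argument. By Proposition~\ref{Proposition4.3}, $-\mathbf{A}$ generates a bounded analytic semigroup, so there exist $\delta\in(0,\pi/2)$ and $C_{\delta}>0$ with $\|(\lambda+\mathbf{A})^{-1}\|_{L(H)}\leq C_{\delta}/|\lambda|$ and $\|\mathbf{A}(\lambda+\mathbf{A})^{-1}\|_{L(H)}\leq C_{\delta}$ for $\lambda\in\Sigma_{\frac{\pi}{2}+\delta}$. Applying the estimate above to $\mathbf{v}=(\lambda+\mathbf{A})^{-1}\mathbf{y}$ gives $\|\mathbf{B}(\lambda+\mathbf{A})^{-1}\|_{L(H)}\leq \eta C_{0}(1+C_{\delta})+C_{\eta}(\phi_{0})C_{\delta}/|\lambda|$, which is $\leq\frac12$ after first fixing $\eta$ small and then requiring $|\lambda|\geq R_{0}$ large. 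Hence $I+\mathbf{B}(\lambda+\mathbf{A})^{-1}$ is boundedly invertible on $H$ there, and from
\begin{align*}
(\lambda+\mathbf{A}+\mathbf{B})^{-1}=(\lambda+\mathbf{A})^{-1}\big(I+\mathbf{B}(\lambda+\mathbf{A})^{-1}\big)^{-1}
\end{align*}
we obtain $\|(\lambda+\mathbf{A}+\mathbf{B})^{-1}\|_{L(H)}\leq 2C_{\delta}/|\lambda|$ for $\lambda\in\Sigma_{\frac{\pi}{2}+\delta}$, $|\lambda|\geq R_{0}$. After a shift $\lambda\mapsto\lambda+\omega$ with $\omega\geq R_{0}$ this is the sectorial resolvent bound on $\Sigma_{\frac{\pi}{2}+\delta}$, so $-(\mathbf{A}+\mathbf{B})$ generates an analytic semigroup. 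Finally, for the norm equivalence, $\|(\mathbf{A}+\mathbf{B})\mathbf{v}\|_{H}\leq(1+\eta C_{0})\|\mathbf{A}\mathbf{v}\|_{H}+C_{\eta}\|\mathbf{v}\|_{H}$ and conversely $\|\mathbf{A}\mathbf{v}\|_{H}\leq\|(\mathbf{A}+\mathbf{B})\mathbf{v}\|_{H}+\eta C_{0}\|\mathbf{A}\mathbf{v}\|_{H}+C_{\eta}\|\mathbf{v}\|_{H}$ (absorb the $\|\mathbf{A}\mathbf{v}\|_{H}$ term), so $\|\mathbf{A}\mathbf{v}\|_{H}\lesssim\|(\mathbf{A}+\mathbf{B})\mathbf{v}\|_{H}+\|\mathbf{v}\|_{H}$; combining with the equivalence in Proposition~\ref{Proposition4.3} gives the claim.

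The main obstacle I anticipate is purely in the bookkeeping of the relative bound: one must use that Lemma~\ref{lemma4.1} costs only the intermediate norm $\|g\|_{H^{1/2}}$ (not $\|g\|_{H^{1}}$) and combine it with the interpolation inequality and Proposition~\ref{Proposition4.3} so that the perturbation bound is genuinely small rather than merely finite; once this is arranged, the semigroup generation and the norm equivalence are standard.
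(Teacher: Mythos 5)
Your proposal is correct and follows essentially the same route as the paper: both establish that $\mathbf{B}$ is $\mathbf{A}$-bounded with arbitrarily small relative bound via Lemma \ref{lemma4.1}, the interpolation $\|g\|_{H^{1/2}}\lesssim\|g\|_{H^{-1}_{(0)}}^{1/4}\|g\|_{H^{1}}^{3/4}$ and Proposition \ref{Proposition4.3}, and then invoke perturbation theory for analytic generators plus the same absorption argument for the norm equivalence. The only difference is cosmetic: the paper cites the relatively bounded perturbation theorem (Pazy, Ch.~3), whereas you reprove it inline via the Neumann-series resolvent estimate.
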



\subsection{Proof of  Proposition \ref{pjme1}}
 Let $\mathbf{f}\in L^{2}(0,T;H)$ be defined as before.  Assuming that $(\mathbf{f}_{1},f_{2},\mathbf{a},\mathbf{u}_0, \phi'_{0})\in Y_{T}$,  $g_{0}=\mathrm{div}\mathbf{u}_0$, $\mathbf{w}_0=P_{\sigma}\mathbf{u}_0$, then $\mathbf{v}_{0}=(\phi', g, \mathbf{w})|_{t=0}=(\phi'_{0}, g_{0}, \mathbf{w}_0)\in H_{N}^{2}(\Omega)\times L^{2}_{(0)}(\Omega)\times( H^{1}(\Omega)\cap L_{\sigma}(\Omega))=(D(\mathbf{A}),H)_{\frac{1}{2},2}$, here we have used Lemma 4.6 in \cite{HA1}. We can extend $\mathbf{f}$ by $0$ for $t>T$.  Without loss of generality, we may assume that $\mathbf{a} \equiv 0$ as the beginning of this section. By using  Theorem \ref{hme} and proposition \ref{Proposition4.4} , the following system
\begin{align*}
&\partial_{t}\mathbf{v}+\mathbf{A}\mathbf{v}+\mathbf{B}\mathbf{v}=\mathbf{f}, t>0,
\\& \mathbf{v}(0)=\mathbf{v}_0,
\end{align*}
has a solution $\mathbf{v}$ satisfying
\begin{align*}
&\|\partial_{t}\mathbf{v}, \mathbf{A}\mathbf{v}+\mathbf{B}\mathbf{v}\|_{L^{2}(0,\infty;H)}\leq C(\|\mathbf{f}\|_{L^{2}(0,\infty;H)}+\|\mathbf{v}_0\|_{(D(\mathbf{A}),H)_{\frac{1}{2},2}}).
\end{align*}
By direct computation, for $0<T\leq T_{0}$, we have
\begin{align*}
&\|\mathbf{v}\|_{L^{2}(0,T;H)}\leq C(T_{0})(\|\partial_{t}\mathbf{v}\|_{L^{2}(0,T;H)}+\|\mathbf{v}_0\|_{H}).
\end{align*}
Therefore,
\begin{align}\label{IV}
\|\partial_{t}\mathbf{v},\mathbf{v},\mathbf{A}\mathbf{v}+\mathbf{B}\mathbf{v}\|_{L^{2}(0,T;H)}&\leq C(T_{0})(\|\mathbf{f}\|_{L^{2}(0,T;H)}+\|\mathbf{v}_0\|_{(D(\mathbf{A}),H)_{\frac{1}{2},2}})
\\&\nonumber \leq C(T_{0})(\|\mathbf{f}_{1}\|_{L^{2}(0,T;L^{2}(\Omega))}+\|{f}_{2}\|_{L^{2}(0,T;H^{1}(\Omega))}+\|\mathbf{v}_0\|_{(D(\mathbf{A}),H)_{\frac{1}{2},2}}).
\end{align}
 This implies that $\mathbf{v}=(\phi', g, \mathbf{w})$ solves \eqref{model-linear-phi0-2}-(\ref{model1-102}). It follows from direct computations that $(\mathbf{u},\phi')$ is a solution \eqref{model-linear-phi0-1}-\eqref{model-linear-phi0-6}. That is
\begin{align*}
L(\phi_{0})\left(\begin{array}{c}
\phi' \\
\mathbf{u}
\end{array}\right)=
\left(\begin{array}{c}
\mathbf{f}_{1} \\
f_{2}\\
0
\end{array}\right).
\end{align*}
Our last task is to verify  $(\mathbf{u}, \phi')\in X_{T}$ and the continuity of $L^{-1}(\phi_{0}):Y_{T}\rightarrow X_{T}$. It suffice to employing (\ref{IV}) and Proposition \ref{Proposition4.4} to prove
 \begin{align*}
 \|(\phi',\mathbf{u})\|_{X_{T}}&=\|\phi'\|_{H^{1}(0,T; H^{1}(\Omega))}+\|\mathbf{u}\|_{H^{1}(0,T; L^{2}(\Omega))}+\|\phi'\|_{L^{2}(0,T; H^{3}(\Omega))}+\|\mathbf{u}\|_{L^{2}(0,T; H^{2}(\Omega))}
 \\&\lesssim\|\phi'\|_{H^{1}(0,T; H^{1}(\Omega))}+\|g\|_{H^{1}(0,T; H^{-1}_{(0)}(\Omega))}+\|\mathbf{w}\|_{H^{1}(0,T; L_{\sigma}^{2}(\Omega))}
\\&\quad+\|\phi'\|_{L^{2}(0,T; H^{3}(\Omega))}+\|g\|_{L^{2}(0,T; H^{1}(\Omega))}+\|\mathbf{w}\|_{L^{2}(0,T; H^{2}(\Omega))}
\\&\lesssim\|\partial_{t}\mathbf{v},\mathbf{v},\mathbf{A}\mathbf{v}+\mathbf{B}\mathbf{v}\|_{L^{2}(0,T;H)}
\\&\leq C(T_{0})(\|\mathbf{f}\|_{L^{2}(0,T;H)}+\|\mathbf{v}_0\|_{(D(\mathbf{A}),H)_{\frac{1}{2},2}})
\\&\leq C(T_{0})(\|\mathbf{f}_{1}\|_{L^{2}(0,T;L^{2}(\Omega))}+\|{f}_{2}\|_{L^{2}(0,T;H^{1}(\Omega))}+\|\mathbf{v}_0\|_{(D(\mathbf{A}),H)_{\frac{1}{2},2}})
\\&\leq C(T_{0})\|(\mathbf{f}_{1},{f}_{2},0,\phi'_{0},\mathbf{u}_{0})\|_{X_{T}}.
\end{align*}
Hence, we prove Proposition \ref{pjme1}.
\appendix
\section{Stokes Operator with Navier Boundary Conditions}
\label{sec:StokesNavier}
This appendix is devoted to recalling some results for the Stokes operator with variable viscosity
in the case of Navier boundary conditions. For more information, we refer to \cite{HA11}.

First, we state the Korn inequality by Theorem 3.5 of \cite{JI} as
\begin{align*}
&\|\mathbf{u}\|_{H^{1}(\Omega)}\leq C\|\mathbb{ D}\mathbf{u}\|_{L^{2}(\Omega)},
\end{align*}
for all $\mathbf{u}\in H^{1}_{n}(\Omega)$.

\begin{Lemma}[\cite{HA1}] \label{Appendix A}
Assume that $\phi_{0}\in W^{1}_{q}(\Omega)$, $q>d$, then $A_{a}\mathbf{w}=-P_{\sigma}\mathrm{div}(\tfrac{1}{\rho_{0}}S(\phi_{0},\mathbf{w}))$ where $\mathbf{w}\in D(A_{a})=\{\mathbf{w}\in H^{2}(\Omega)\cap L_{\sigma}^{2}(\Omega): \mathbf{w}\cdot \mathbf{n}|_{\partial\Omega}=0 ,
T_{a}\mathbf{w}=0\}$ is a bounded, self-adjoint and invertible operator.
\end{Lemma}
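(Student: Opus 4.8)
The plan is to exhibit $A_a$ as the self-adjoint operator associated with a bounded, coercive, symmetric sesquilinear form, and then to identify its abstract form domain with the explicit space $D(A_a)$ by means of $H^2$-elliptic regularity for the generalized Stokes system with Navier boundary conditions. To set it up I would put $V := H^1_n(\Omega)\cap L_\sigma^2(\Omega)$, a closed subspace of $H^1(\Omega)^d$, and introduce on $V\times V$ the form
\[
\mathfrak{a}(\mathbf{u},\mathbf{v}) := \int_\Omega \frac{2\eta(\phi_0)}{\rho_0}\,\mathbb{D}(\mathbf{u}):\mathbb{D}(\mathbf{v})\,\mathrm{d}x + \int_{\partial\Omega}\frac{a(\phi_0)}{\rho_0}\,\mathbf{u}_\tau\cdot\mathbf{v}_\tau\,\mathrm{d}\sigma .
\]
Since $\mathrm{div}\,\mathbf{u}=\mathrm{div}\,\mathbf{v}=0$ on $V$, the trace-free (dilatational) part of $S(\phi_0,\cdot)$ drops out, so $\mathfrak{a}$ is precisely the form formally associated with $A_a$.

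The form $\mathfrak{a}$ is symmetric; it is bounded on $V\times V$ because $\eta,a$ are bounded (Assumption \ref{main assumption}), $\rho_0=\rho(\phi_0)$ is bounded above and below by positive constants (as in the running setting), and the trace $H^1(\Omega)\to L^2(\partial\Omega)$ is continuous; and it is $V$-coercive, since $a\ge 0$ and the Korn inequality recalled above give $\mathfrak{a}(\mathbf{u},\mathbf{u})\ge \tfrac{2\inf\eta}{\sup\rho_0}\|\mathbb{D}(\mathbf{u})\|_{L^2}^2\ge c\|\mathbf{u}\|_{H^1}^2$. As $V$ is densely and continuously embedded in $L_\sigma^2(\Omega)$, $\mathfrak{a}$ is densely defined, symmetric, bounded below and closed, so the first representation theorem for forms produces a self-adjoint operator $\widetilde A_a$ on $L_\sigma^2(\Omega)$ with $D(\widetilde A_a)=\{\mathbf{u}\in V:\exists\,\mathbf{f}\in L_\sigma^2(\Omega),\ \mathfrak{a}(\mathbf{u},\mathbf{v})=(\mathbf{f},\mathbf{v})_{L^2}\ \forall\,\mathbf{v}\in V\}$ and $\widetilde A_a\mathbf{u}=\mathbf{f}$; coercivity gives $\widetilde A_a\ge cI$, hence $\widetilde A_a$ is boundedly invertible, with $\widetilde A_a^{-1}$ mapping $L_\sigma^2(\Omega)$ into $V$.

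Next I would identify $D(\widetilde A_a)$ with $D(A_a)$ and check that $\widetilde A_a=A_a$ on it. For the inclusion $D(A_a)\subseteq D(\widetilde A_a)$: given $\mathbf{w}\in H^2\cap L_\sigma^2$ with $\mathbf{w}\cdot\mathbf{n}|_{\partial\Omega}=0$ and $T_a\mathbf{w}=0$, and any $\mathbf{v}\in V$, integration by parts together with $P_\sigma\mathbf{v}=\mathbf{v}$, the symmetry of $S$, $\mathrm{div}\,\mathbf{w}=0$, $\mathbf{v}|_{\partial\Omega}=\mathbf{v}_\tau$, and the Navier condition $(\mathbf{n}\cdot S(\phi_0,\mathbb{D}\mathbf{w}))_\tau=-(a(\phi_0)\mathbf{w})_\tau$ yields $(A_a\mathbf{w},\mathbf{v})_{L^2}=\mathfrak{a}(\mathbf{w},\mathbf{v})$, so $\mathbf{w}\in D(\widetilde A_a)$ and $\widetilde A_a\mathbf{w}=A_a\mathbf{w}$. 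The converse inclusion $D(\widetilde A_a)\subseteq D(A_a)$ is the genuine $H^2$-regularity statement: if $\mathbf{u}\in V$ and $\mathfrak{a}(\mathbf{u},\cdot)=(\mathbf{f},\cdot)_{L^2}$ with $\mathbf{f}\in L_\sigma^2(\Omega)$, then the pair $(\mathbf{u},\pi)$ solving $-\mathrm{div}(\tfrac1{\rho_0}S(\phi_0,\mathbf{u}))+\nabla\pi=\mathbf{f}$, $\mathrm{div}\,\mathbf{u}=0$, $\mathbf{u}\cdot\mathbf{n}|_{\partial\Omega}=0$, $T_a\mathbf{u}=0$ satisfies $\mathbf{u}\in H^2(\Omega)$ with $\|\mathbf{u}\|_{H^2}\lesssim\|\mathbf{f}\|_{L^2}+\|\mathbf{u}\|_{L^2}$, which I would invoke from the theory of the variable-viscosity Stokes operator with Navier boundary conditions in \cite{HA11}. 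I expect this to be the main obstacle; self-containedly it would be obtained by the usual localization, flattening of the boundary and freezing of the $W^1_q$-coefficients (licit because $W^1_q(\Omega)\hookrightarrow C(\overline{\Omega})$ for $q>d$), reducing to the constant-coefficient Navier-slip Stokes estimate near the boundary and the interior Laplace estimate, with the variable-coefficient commutators, the pressure term and the friction term $(a(\phi_0)\mathbf{w})_\tau$ absorbed as lower-order perturbations via trace interpolation.

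Finally, for boundedness I would expand $A_a\mathbf{w}=-P_\sigma\big(\tfrac{2\eta(\phi_0)}{\rho_0}\,\mathrm{div}\,\mathbb{D}(\mathbf{w})+\nabla(\tfrac{2\eta(\phi_0)}{\rho_0})\cdot\mathbb{D}(\mathbf{w})\big)$ and estimate, using that $P_\sigma$ is bounded on $L^2$, $\tfrac{\eta(\phi_0)}{\rho_0}\in L^\infty$, $\nabla(\tfrac{\eta(\phi_0)}{\rho_0})\in L^q$, and the Sobolev embedding $H^1(\Omega)\hookrightarrow L^{2q/(q-2)}(\Omega)$, valid for $q>d$ with $d\in\{2,3\}$, to get $\|A_a\mathbf{w}\|_{L^2}\lesssim\|\mathbf{w}\|_{H^2}$ by Hölder's inequality; combined with the regularity bound above and $\|\mathbf{w}\|_{L^2}\lesssim\|A_a\mathbf{w}\|_{L^2}$ (invertibility), the graph norm of $A_a$ is equivalent to $\|\cdot\|_{H^2(\Omega)}$. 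Putting the three steps together, $A_a=\widetilde A_a$ is self-adjoint, boundedly invertible, and bounded as a map from $(D(A_a),\|\cdot\|_{H^2})$ to $L_\sigma^2(\Omega)$, which is the claim.
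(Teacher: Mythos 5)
Your proposal is correct and its skeleton is the same as the paper's: a symmetric, coercive bilinear form on divergence-free tangential $H^1$ fields (coercivity via the Korn inequality), solvability of the weak problem, an $H^2$-regularity step to identify the strong domain, and a direct estimate $\|A_a\mathbf{w}\|_{L^2}\lesssim\|\mathbf{w}\|_{H^2}$ using $\phi_0\in W^1_q$, $q>d$. Two points differ genuinely. First, you obtain self-adjointness abstractly from Kato's first representation theorem for the form, whereas the paper argues more elementarily: it computes the symmetry $(A_a\mathbf{u},\mathbf{w})=(\mathbf{u},A_a\mathbf{w})$ by integration by parts and combines this with surjectivity (Lax--Milgram plus regularity); your route packages self-adjointness and invertibility more cleanly, at the price of having to prove the domain identification $D(\widetilde A_a)=D(A_a)$ in both directions. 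Second, and this is the substantive divergence, the $H^2$-regularity for the variable-viscosity Navier-slip problem \emph{with friction term} is exactly the step you leave as an invocation of \cite{HA11} or a sketched localization/coefficient-freezing argument; the paper instead removes the friction term by a lifting trick: since $\tfrac{a(\phi_0)}{\eta(\phi_0)}\mathbf{v}|_{\partial\Omega}\in H^{1/2}(\partial\Omega)$, Lemma \ref{jme4} provides $\mathbf{u}\in H^2\cap L^2_\sigma$ with $(\mathbf{n}\cdot 2\eta(\phi_0)\mathbb{D}\mathbf{u})_\tau=a(\phi_0)\mathbf{v}|_{\partial\Omega}$, so that $\widetilde{\mathbf{v}}=\mathbf{u}+\mathbf{v}$ solves a friction-free problem to which the known $H^2$-result (Lemma 4 in \cite{HA12}) applies directly, giving $\mathbf{v}\in H^2$. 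If you want to close the one step you flag as "the main obstacle" without redoing a localization argument, you can simply borrow this lifting device, which works verbatim inside your form framework; conversely, your localization plan is more self-contained and would extend to settings where no frictionless reference result is at hand, but as written it is the only part of your argument that is not fully carried out.
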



\section{Embedding and Multiplication Properties}
\label{sec:Embedding}
We summarize here  a few  basic results about embedding and multiplication properties {on a three dimensional bounded domain with sufficiently smooth boundary.}

\begin{lemma}[\cite{HA}]\label{jme}
Let $X_{0}, X_{1}$ be Banach spaces such that $X_{1}\hookrightarrow X_{0} $ densely, then
\begin{align*}
W_{p}^{1}(0,T; X_{0})\cap L^{p}(0,T; X_{1})\hookrightarrow BUC([0,T]; (X_{0},X_{1})_{1-\frac{1}{p},p}),
  \end{align*}
continuously for $0 < T < \infty$ and $1\leq p<\infty$.
\end{lemma}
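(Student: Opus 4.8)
The plan is to deduce this from the classical trace characterization of the real interpolation space $(X_0,X_1)_{1-\frac1p,p}$, combined with an extension operator and a density argument. Write $Y:=W_{p}^{1}([0,T];X_{0})\cap L^{p}([0,T];X_{1})$ and $Z:=(X_{0},X_{1})_{1-\frac1p,p}$, and recall the Lions--Peetre/Amann trace theorem: a point $a\in X_{0}$ lies in $Z$ if and only if there exists $v\in W^{1}_{p}([0,\infty);X_{0})\cap L^{p}([0,\infty);X_{1})$ with $v(0)=a$, and $\|a\|_{Z}$ is equivalent to the infimum of $\|v\|_{W^{1}_{p}([0,\infty);X_{0})\cap L^{p}([0,\infty);X_{1})}$ over all such $v$. (For $p=1$ one has $W_{1}^{1}([0,T];X_{0})\hookrightarrow BUC([0,T];X_{0})$ directly, and the assertion reduces to the $\theta=0$ interpolation convention; I will focus on $1<p<\infty$.) The reference \cite{HA} may be invoked for this identification.

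First I would build a bounded extension operator $E\colon Y\to W^{1}_{p}(\mathbb{R};X_{0})\cap L^{p}(\mathbb{R};X_{1})$, e.g.\ by a higher-order reflection across the endpoints $0$ and $T$ followed by a cutoff, so that $\|Eu\|_{W^{1}_{p}(\mathbb{R};X_{0})\cap L^{p}(\mathbb{R};X_{1})}\le C_{T}\|u\|_{Y}$. For each $t_{0}\in[0,T]$ the function $s\mapsto (Eu)(t_{0}+s)$ restricted to $s\ge 0$ lies in $W^{1}_{p}([0,\infty);X_{0})\cap L^{p}([0,\infty);X_{1})$ with norm controlled by $\|Eu\|_{W^{1}_{p}(\mathbb{R};X_{0})\cap L^{p}(\mathbb{R};X_{1})}$, so the trace theorem yields $u(t_{0})\in Z$ and
\begin{align*}
\sup_{t_{0}\in[0,T]}\|u(t_{0})\|_{Z}\le C\,\|Eu\|_{W^{1}_{p}(\mathbb{R};X_{0})\cap L^{p}(\mathbb{R};X_{1})}\le C_{T}\,\|u\|_{Y}.
\end{align*}
In particular $u\in L^{\infty}([0,T];Z)$ with the desired estimate; it remains to upgrade this to $BUC$.

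For the regularity in time, I would use density: the functions in $C^{1}([0,T];X_{1})$ are dense in $Y$ (using density of $X_{1}$ in $X_{0}$ and mollification in time after extension), and for such a function $t\mapsto u(t)$ is manifestly continuous from $[0,T]$ into $X_{1}$, hence into $Z$ since $X_{1}\hookrightarrow Z$ continuously. Given a general $u\in Y$, choose approximants $u_{n}$ of this type with $u_{n}\to u$ in $Y$; applying the uniform bound above to the differences $u_{n}-u_{m}$ shows $(u_{n})$ is Cauchy in $C([0,T];Z)$, so $u_{n}\to\tilde u$ in $C([0,T];Z)$, and passing to a subsequence where $u_{n}(t)\to u(t)$ in $X_{0}$ (continuity of evaluation $Y\to C([0,T];X_{0})$) identifies $\tilde u=u$. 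Since $[0,T]$ is compact, $C([0,T];Z)=BUC([0,T];Z)$, which gives the claimed embedding with operator norm $\le C_{T}$.

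The step needing the most care is the trace characterization with the precise exponent $1-\tfrac1p$ and the control of the extension constant $C_{T}$; the cleanest route is simply to cite \cite{HA} for the trace-space identification, after which the extension, uniform-bound, and density arguments above are routine. One should also treat the endpoint $p=1$ separately, as noted.
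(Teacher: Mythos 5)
Your argument is essentially correct, but there is nothing in the paper to compare it with: Lemma \ref{jme} is quoted from Amann \cite{HA} without proof, so the "paper's proof" is a citation. Your route is the standard one for this classical embedding: the Lions--Peetre trace characterization of $(X_0,X_1)_{1-\frac{1}{p},p}$ as the trace space at $t=0$ of $W^1_p([0,\infty);X_0)\cap L^p([0,\infty);X_1)$, a bounded extension from $[0,T]$ to the line (a simple even reflection plus cutoff already suffices at first order, no higher-order reflection needed), the shift $s\mapsto (Eu)(t_0+s)$ to obtain the uniform bound $\sup_{t_0\in[0,T]}\|u(t_0)\|_{(X_0,X_1)_{1-\frac{1}{p},p}}\le C_T\|u\|_{W^1_p(X_0)\cap L^p(X_1)}$, and density of smooth $X_1$-valued functions (extension plus mollification) together with the Cauchy argument in $C([0,T];(X_0,X_1)_{1-\frac{1}{p},p})$, identified pointwise via $W^1_p([0,T];X_0)\hookrightarrow C([0,T];X_0)$, to upgrade boundedness to continuity. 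All of these steps are sound. Two caveats: the real content sits in the trace characterization with the exponent $1-\frac{1}{p}$, which you cite rather than prove --- acceptable, since that is exactly what the cited reference supplies and the argument is not circular; and the endpoint $p=1$ genuinely requires a convention (the trace space is then all of $X_0$, so the conclusion should be read as $W^1_1\hookrightarrow BUC([0,T];X_0)$ rather than involving a K-method space $(X_0,X_1)_{0,1}$), so your remark there is loose, though immaterial for the paper, which only applies the lemma with $p=2$ (e.g.\ $X_0=H^1(\Omega)$, $X_1=H^3(\Omega)$, giving $(X_0,X_1)_{\frac12,2}=H^2(\Omega)$). Your constant $C_T$ may depend on $T$ through the extension and cutoff, which is consistent with the statement as formulated.
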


\begin{lemma}[\cite{BH,JJ2}]\label{jme1}
Given $g\in B_{p,1}^{\frac{d}{p}}(\Omega)$ and $f\in H^{1}(\Omega)$, it holds
\begin{align*}
&\|fg\|_{H^{1}(\Omega)}\leq C_{p}\|f\|_{H^{1}(\Omega)}\|g\|_{B_{p,1}^{\frac{d}{p}}(\Omega)}, \text{ provided $p\geq2$}.
  \end{align*}

\end{lemma}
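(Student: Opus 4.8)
The plan is to reduce the estimate to the whole space $\mathbb{R}^{d}$ by a bounded extension, then split the product by the Leibniz rule; all but one of the resulting terms is elementary. First I would fix bounded extension operators $E\colon H^{1}(\Omega)\to H^{1}(\mathbb{R}^{d})$ and $E\colon B_{p,1}^{d/p}(\Omega)\to B_{p,1}^{d/p}(\mathbb{R}^{d})$ (these exist since $\Omega$ is smooth and bounded and the spaces on $\Omega$ are defined by restriction), which reduces the claim to bounding $\|fg\|_{H^{1}(\mathbb{R}^{d})}$ by $C\|f\|_{H^{1}(\mathbb{R}^{d})}\|g\|_{B_{p,1}^{d/p}(\mathbb{R}^{d})}$. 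Using $\|fg\|_{H^{1}}^{2}\approx\|fg\|_{L^{2}}^{2}+\|\nabla(fg)\|_{L^{2}}^{2}$ together with $\nabla(fg)=f\,\nabla g+g\,\nabla f$ a.e., and the embedding $B_{p,1}^{d/p}(\mathbb{R}^{d})\hookrightarrow L^{\infty}(\mathbb{R}^{d})$ (the $\mathbb{R}^{d}$ analogue of \eqref{BB3}), I would immediately dispatch the contributions $\|fg\|_{L^{2}}\le\|g\|_{L^{\infty}}\|f\|_{L^{2}}$ and $\|g\,\nabla f\|_{L^{2}}\le\|g\|_{L^{\infty}}\|\nabla f\|_{L^{2}}$, both of which are bounded by $C\|g\|_{B_{p,1}^{d/p}}\|f\|_{H^{1}}$.

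The one remaining term, $\|f\,\nabla g\|_{L^{2}(\mathbb{R}^{d})}$, is the crux. My plan here is: since $\partial_{j}$ maps $B_{p,1}^{d/p}(\mathbb{R}^{d})$ boundedly into $B_{p,1}^{d/p-1}(\mathbb{R}^{d})$, and $p\le d$, the scaling-critical Besov embedding $B_{p,1}^{\,d/p-d/r}(\mathbb{R}^{d})\hookrightarrow L^{r}(\mathbb{R}^{d})$ applied with $r=d$ gives $\nabla g\in L^{d}(\mathbb{R}^{d})$ with $\|\nabla g\|_{L^{d}}\le C\|g\|_{B_{p,1}^{d/p}}$; it is precisely here that the third Besov index $1$ is used, since this endpoint fails if it is replaced by $\infty$. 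On the other side, the Sobolev embedding $H^{1}(\mathbb{R}^{d})\hookrightarrow L^{2d/(d-2)}(\mathbb{R}^{d})$ gives $\|f\|_{L^{2d/(d-2)}}\le C\|f\|_{H^{1}}$, and since $\tfrac{d-2}{2d}+\tfrac1d=\tfrac12$, Hölder's inequality yields
\[
\|f\,\nabla g\|_{L^{2}}\le\|f\|_{L^{2d/(d-2)}}\,\|\nabla g\|_{L^{d}}\le C\|f\|_{H^{1}}\,\|g\|_{B_{p,1}^{d/p}}.
\]
For the case $d=3$, $p=2$ relevant to Theorem \ref{main theorem1} this is exactly the chain $L^{6}\cdot L^{3}\hookrightarrow L^{2}$ with $\nabla g\in B_{2,1}^{1/2}(\mathbb{R}^{3})\hookrightarrow L^{3}(\mathbb{R}^{3})$. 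Collecting the three bounds then finishes the argument.

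The hard part is this last term: because $f$ lies only in $H^{1}$ one cannot extract $\|f\|_{L^{\infty}}$, so the full derivative on $g$ must be absorbed by the integrability gain $B_{p,1}^{d/p-1}\hookrightarrow L^{d}$, a borderline embedding whose validity rests on the summability index $q=1$ — this is the structural reason $B_{p,1}^{d/p}$ rather than $H^{d/p}$ appears in the statement. In the endpoint dimension $d=2$, where $2d/(d-2)=\infty$ and the crude Hölder pairing degenerates, I would instead use a Bony paraproduct splitting $f\,\nabla g=T_{f}\nabla g+T_{\nabla g}f+R(f,\nabla g)$: the two paraproducts are controlled by $\|f\|_{H^{1}}\|g\|_{B_{2,1}^{1}}$ via Bernstein's inequality, while the high--high remainder is handled using that $\nabla g\in B_{2,1}^{0}(\mathbb{R}^{2})$ is absolutely summable over dyadic frequency blocks; alternatively one simply invokes the pointwise multiplier theorem for $B_{p,1}^{d/p}$ acting on $H^{s}$ with $0<s\le d/p$, as recorded in \cite{BH,JJ2}.
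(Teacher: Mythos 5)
Your argument cannot be compared with an internal proof, because the paper gives none: Lemma \ref{jme1} is simply quoted from \cite{BH,JJ2}. As a self-contained substitute, your main chain — extension to $\mathbb{R}^d$, the Leibniz rule, $B^{d/p}_{p,1}\hookrightarrow L^{\infty}$ for the terms $fg$ and $g\nabla f$, and for the critical term $f\nabla g$ the combination $\nabla g\in B^{d/p-1}_{p,1}\hookrightarrow B^{0}_{d,1}\hookrightarrow L^{d}$ with $H^{1}\hookrightarrow L^{2d/(d-2)}$ and H\"older — is correct for $d\ge 3$ and $2\le p\le d$, and this covers every use of the lemma in the paper ($d=3$ with $p=2$ and $p=3$).

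There are, however, two points you should not pass over. First, the restriction $p\le d$ that you slip in ("and $p\le d$") is not among the hypotheses of the lemma, which only assumes $p\ge 2$; and it is not a removable limitation of your method but a necessary correction of the statement. Indeed, for $p>d$ the inequality is false: take $g_j(x)=2^{-jd/p}e^{i2^jx_1}\chi(x)$ with $\chi$ a fixed bump, so that $\|g_j\|_{B^{d/p}_{p,1}}=O(1)$, and any fixed $f\in H^{1}$ not vanishing on the support of $\chi$; then $\|\nabla(fg_j)\|_{L^2}\ge \|f\,\partial_{1}g_j\|_{L^2}-\|g_j\nabla f\|_{L^2}\gtrsim 2^{j(1-d/p)}\to\infty$ while the right-hand side stays bounded. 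So you should state explicitly that your proof establishes the lemma in the range $2\le p\le d$ (harmless here, since all applications have $p\le d=3$), rather than assume it silently. Second, your $d=2$ fallback is not correct as written: with only $f\in H^{1}(\mathbb{R}^{2})$, Bernstein gives merely $\|S_{j-1}f\|_{L^{\infty}}\lesssim j^{1/2}\|f\|_{H^{1}}$, so the paraproduct $T_{f}\nabla g$ is not controlled by $\|f\|_{H^{1}}\|\nabla g\|_{B^{0}_{2,1}}$, and the loss is genuine: pairing a unit wave packet $g_j(x)=e^{i2^jx_1}\varphi(2^jx)$ (which has $\|g_j\|_{B^{1}_{2,1}(\mathbb{R}^{2})}=O(1)$) with $f\sim(\log(1/|x|))^{\alpha}$, $\alpha<1/2$, gives $\|f\,\nabla g_j\|_{L^{2}}\gtrsim j^{\alpha}$, so the endpoint $p=d=2$ estimate itself breaks down and cannot be rescued by the paraproduct splitting or by an unqualified appeal to \cite{BH,JJ2}. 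Since Theorem \ref{main theorem1} is three-dimensional this does not affect the paper's applications, but the $d=2$ paragraph of your proposal should be dropped or reworked.
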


\begin{lemma}[\cite{HA1}]\label{jme7}
If $f\in L^{\infty}(\Omega)\cap H^{2}(\Omega)$ and $g\in L^{\infty}(\Omega)\cap H^{2}(\Omega)$,  then
\begin{align*}
&\|fg\|_{H^{2}(\Omega)}\leq C(\|f\|_{L^{\infty}(\Omega)}\|g\|_{H^{2}(\Omega)}+\|f\|_{H^{2}(\Omega)}\|g\|_{L^{\infty}(\Omega)}).
  \end{align*}
\end{lemma}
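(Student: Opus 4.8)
The plan is to expand $fg$ via the Leibniz rule and estimate each order of derivative in $\|fg\|_{H^2(\Omega)}$ separately, using $\|fg\|_{H^2}^2 \approx \|fg\|_{L^2}^2 + \|\nabla(fg)\|_{L^2}^2 + \|\nabla^2(fg)\|_{L^2}^2$. The zeroth and first order terms are immediate from H\"older: $\|fg\|_{L^2} \le \|f\|_{L^\infty}\|g\|_{L^2}$, and since $\nabla(fg) = (\nabla f)g + f\,\nabla g$ one has $\|\nabla(fg)\|_{L^2} \le \|g\|_{L^\infty}\|\nabla f\|_{L^2} + \|f\|_{L^\infty}\|\nabla g\|_{L^2}$. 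Both are controlled by the right-hand side of the claimed inequality since $\|f\|_{L^2},\|\nabla f\|_{L^2} \le \|f\|_{H^2}$, and symmetrically for $g$.

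For the second order term, write $\nabla^2(fg) = (\nabla^2 f)\,g + 2\,\nabla f \otimes \nabla g + f\,\nabla^2 g$. The two outer terms are handled exactly as above, giving $\|(\nabla^2 f)g\|_{L^2} \le \|g\|_{L^\infty}\|f\|_{H^2}$ and $\|f\,\nabla^2 g\|_{L^2} \le \|f\|_{L^\infty}\|g\|_{H^2}$. The only genuine point is the mixed term: by H\"older, $\|\nabla f \otimes \nabla g\|_{L^2(\Omega)} \le \|\nabla f\|_{L^4(\Omega)}\|\nabla g\|_{L^4(\Omega)}$, so one needs to estimate $\|\nabla f\|_{L^4(\Omega)}$ by the geometric mean of $\|f\|_{L^\infty(\Omega)}$ and $\|f\|_{H^2(\Omega)}$.

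The main (and essentially only nontrivial) obstacle is therefore the Gagliardo--Nirenberg type interpolation inequality on the smooth bounded domain $\Omega \subset \mathbb{R}^3$,
\begin{align*}
\|\nabla f\|_{L^4(\Omega)} \le C\,\|f\|_{L^\infty(\Omega)}^{1/2}\,\|f\|_{H^2(\Omega)}^{1/2},
\end{align*}
which holds in dimension $d \le 4$; one may obtain it, for instance, by composing with a bounded Sobolev extension operator $H^2(\Omega) \to H^2(\mathbb{R}^d)$ (which preserves $L^\infty$ up to a constant) and invoking the classical Gagliardo--Nirenberg inequality on $\mathbb{R}^d$, or directly via an integration-by-parts argument yielding $\|\nabla f\|_{L^4}^2 \lesssim \|f\|_{L^\infty}\|f\|_{H^2}$.

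Granting this, H\"older together with its analogue for $g$ gives $\|\nabla f \otimes \nabla g\|_{L^2(\Omega)} \le C\big(\|f\|_{H^2}\|g\|_{L^\infty}\big)^{1/2}\big(\|f\|_{L^\infty}\|g\|_{H^2}\big)^{1/2}$, and the arithmetic--geometric mean inequality $\sqrt{PQ} \le \tfrac12(P+Q)$ with $P = \|f\|_{H^2}\|g\|_{L^\infty}$ and $Q = \|f\|_{L^\infty}\|g\|_{H^2}$ bounds the mixed term by $\tfrac{C}{2}\big(\|f\|_{H^2}\|g\|_{L^\infty} + \|f\|_{L^\infty}\|g\|_{H^2}\big)$. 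Collecting the zeroth, first and second order contributions yields the claimed estimate for $\|fg\|_{H^2(\Omega)}$. Since the statement is attributed to \cite{HA1}, one could alternatively simply cite it; the argument above is the standard route.
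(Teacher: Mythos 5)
Your proof is correct. Note that the paper itself does not prove this lemma at all: it is stated in Appendix \ref{sec:Embedding} purely as a citation to \cite{HA1}, so there is no in-paper argument to compare against; your Leibniz-rule decomposition is the standard route and supplies a valid self-contained proof. You correctly identify the only nontrivial point, namely the mixed term $\nabla f\otimes\nabla g$, and the interpolation inequality $\|\nabla f\|_{L^{4}(\Omega)}\leq C\|f\|_{L^{\infty}(\Omega)}^{1/2}\|f\|_{H^{2}(\Omega)}^{1/2}$ you invoke is genuinely needed (the cruder bound $\|\nabla f\|_{L^4}\lesssim\|f\|_{H^2}$ via $H^1\hookrightarrow L^4$ would only give $\|f\|_{H^2}\|g\|_{H^2}$, which is not controlled by the right-hand side); it holds in the paper's setting $d=3$ (indeed $d\leq 4$), and your justification via a Stein-type extension operator, which is simultaneously bounded on $H^{2}$ and $L^{\infty}$, followed by Gagliardo--Nirenberg on $\mathbb{R}^{d}$, is sound and avoids the boundary terms that a direct integration by parts on $\Omega$ would produce. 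Combining with H\"older and AM--GM as you do closes the argument.
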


\begin{lemma}[\cite{HA1}]\label{jme2}
Let $ X_{T}^{2}\coloneqq\{\phi\in H^{1}(0,T;H^{1}(\Omega))\cap  L^{2}(0,T;H^{3}(\Omega)): \mathbf{n}\cdot\nabla\phi|_{\partial\Omega}=0\} $ as in Section \ref{sec:proofofThm}. Then it follows
\begin{align*}
X_{T}^{2}&\hookrightarrow C^{\frac{1}{2}}([0,T];H^{1}(\Omega))\cap BUC([0,T];H^{2}(\Omega))\\&\hookrightarrow C^{\frac{1}{8}}([0,T];H^{\frac{7}{4}}(\Omega))\hookrightarrow C^{\frac{1}{8}}([0,T];B_{3,1}^{1}(\Omega)) \hookrightarrow C^{\frac{1}{8}}([0,T];L^{\infty}(\Omega)).
  \end{align*}
\end{lemma}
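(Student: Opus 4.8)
The plan is to read the displayed chain as four successive continuous inclusions and to verify each one in turn, using the abstract interpolation and Besov facts recorded in Section~\ref{sec:preliminaries}. For the first inclusion $X_T^2 \hookrightarrow C^{1/2}([0,T];H^1(\Omega)) \cap BUC([0,T];H^2(\Omega))$, the $BUC$-part is exactly Lemma~\ref{jme} with $p=2$, $X_0 = H^1(\Omega)$ and $X_1 = H^3(\Omega)$: the trace space is $(H^1,H^3)_{1/2,2} = H^2(\Omega)$ by \eqref{ae} (with $\theta = 1/2$, $s = \tfrac12\cdot 1 + \tfrac12 \cdot 3 = 2$), so that $H^1(0,T;H^1)\cap L^2(0,T;H^3) \hookrightarrow BUC([0,T];H^2)$. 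The $C^{1/2}$-part follows from the $H^1(\Omega)$-valued temporal Sobolev embedding: for $0\le s<t\le T$ and $\phi \in H^1(0,T;H^1(\Omega))$,
\begin{align*}
\|\phi(t)-\phi(s)\|_{H^1(\Omega)} = \Big\|\int_s^t \partial_t\phi(\tau)\,\mathrm d\tau\Big\|_{H^1(\Omega)} \le |t-s|^{1/2}\,\|\partial_t\phi\|_{L^2(0,T;H^1(\Omega))},
\end{align*}
so the $C^{1/2}$-seminorm is controlled by $\|\phi\|_{X_T^2}$.

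Next I would upgrade the spatial regularity at the cost of the H\"older exponent in time via the interpolation-in-time embedding \eqref{BCC}. Taking $X_0 = H^1(\Omega)$, $X_1 = H^2(\Omega)$ and $X = H^{7/4}(\Omega)$, the required inequality $\|f\|_{H^{7/4}} \le C\|f\|_{H^1}^{1-\theta}\|f\|_{H^2}^{\theta}$ holds with $\theta = 3/4$ by \eqref{ae} (since $7/4 = (1-\theta)\cdot1 + \theta\cdot2$). With $s=1/2$ from the first step, \eqref{BCC} then yields $BUC([0,T];H^2)\cap C^{1/2}([0,T];H^1) \hookrightarrow C^{s(1-\theta)}([0,T];H^{7/4}) = C^{1/8}([0,T];H^{7/4})$, because $s(1-\theta) = \tfrac12\cdot\tfrac14 = \tfrac18$.

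The last two inclusions are purely spatial, and I would apply them pointwise in time after remarking that any bounded linear embedding $E \hookrightarrow F$ induces $C^{1/8}([0,T];E) \hookrightarrow C^{1/8}([0,T];F)$ with the same exponent, since H\"older seminorms and sup-norms pass through bounded linear maps. For $H^{7/4}(\Omega) = B_{2,2}^{7/4}(\Omega) \hookrightarrow B_{3,1}^{1}(\Omega)$ I would thread through the Besov facts: by \eqref{BB1}, $B_{2,2}^{7/4}\hookrightarrow B_{2,\infty}^{7/4}\hookrightarrow B_{2,1}^{7/4-\varepsilon}$ for any $\varepsilon>0$, and then \eqref{BB2} gives $B_{2,1}^{7/4-\varepsilon}\hookrightarrow B_{3,1}^{1}$ provided $7/4-\varepsilon\ge 1$ and the dimensional balance $(7/4-\varepsilon) - \tfrac{d}{2} \ge 1 - \tfrac{d}{3}$ holds; with $d=3$ this reads $7/4-\varepsilon \ge 3/2$, satisfied for $\varepsilon = 1/8$. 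Finally \eqref{BB3} with $d=3$, $p=3$ gives $B_{3,1}^{1}(\Omega) = B_{3,1}^{d/p}(\Omega) \hookrightarrow C_b^0(\Omega)\hookrightarrow L^\infty(\Omega)$.

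The steps are largely bookkeeping with the quoted lemmas; \emph{the only point requiring genuine care is the middle Besov inclusion} $H^{7/4}\hookrightarrow B_{3,1}^1$, where one must first spend a little smoothness via \eqref{BB1} to reduce the fine index from $q=2$ to $q=1$ before raising the integrability from $p=2$ to $p=3$ in \eqref{BB2}, and check that the dimensional condition $s_1-\tfrac{d}{p_1}\ge s_2-\tfrac{d}{p_2}$ still holds with room to spare. Everything else reduces to the interpolation identity \eqref{ae} and the two abstract time-embeddings of Lemma~\ref{jme} and \eqref{BCC}.
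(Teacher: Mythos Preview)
Your proof is correct and proceeds exactly along the lines one would expect from the preliminaries assembled in Section~\ref{sec:preliminaries}. Note, however, that the paper does not actually give its own proof of this lemma: it is simply quoted from \cite{HA1} and stated without argument in Appendix~\ref{sec:Embedding}. Your reconstruction---Lemma~\ref{jme} plus \eqref{ae} for the $BUC([0,T];H^2)$ part, the elementary $H^1\hookrightarrow C^{1/2}$ temporal embedding, then \eqref{BCC} with $\theta=3/4$, $s=1/2$ for the $C^{1/8}([0,T];H^{7/4})$ step, and finally the Besov chain \eqref{BB1}--\eqref{BB3} for the spatial part---is precisely the intended route and matches how these tools are invoked elsewhere in the paper (e.g.\ in the proof of Proposition~\ref{pjme}). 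The care you took in the $H^{7/4}\hookrightarrow B_{3,1}^1$ step, first dropping to $q=1$ via \eqref{BB1} at the cost of $\varepsilon$ smoothness before applying \eqref{BB2}, is exactly the right manoeuvre.
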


\begin{lemma}[\cite{MR}]\label{jme5}
Let
$H^{s,2s}_{p}((0,T)\times\Omega;E)=H^{s}_{p}(0,T;L^{p}(\Omega;E))\cap L^{p}(0,{H_p^{2s}(\Omega;E))}$
with $0<s\leq1$, {$2s \in \mathbb{N}$,} $ 1 < p < \infty $ and $E$ be a Banach space of class $UMD$ which means that
the Hilbert transform is bounded on $E$. Then the trace operator $tr|_{\partial\Omega}$ is continuous as
$$tr|_{\partial\Omega}: H^{s,2s}_{p}((0,T)\times\Omega;E)
\to
H^{s-\frac{1}{2p},2s-\frac{1}{p}}_{p}((0,T)\times\partial\Omega;E).$$ In particularly, $H^{s,2s}_{2}\triangleq H^{s,2s}, H^{s-\frac{1}{4},2s-\frac{1}{2}}_{2}\triangleq H^{s-\frac{1}{4},2s-\frac{1}{2}}$.
\end{lemma}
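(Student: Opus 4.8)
The plan is to reduce the assertion to the half-space model problem and then establish the trace estimate there directly: first in the Hilbert case $p=2$ by Plancherel, and then, for general $p$ and UMD-valued $E$, by operator-valued Fourier multiplier theory. Throughout one works under the (implicit) hypothesis $2s>\tfrac1p$, equivalently $s-\tfrac1{2p}>0$, which is exactly what guarantees that the spatial--temporal trace exists; since $p<\infty$ it is enough to prove the inequality $\|tr|_{\partial\Omega}u\|_{H^{s-1/(2p),\,2s-1/p}_p((0,T)\times\partial\Omega;E)}\le C\,\|u\|_{H^{s,2s}_p((0,T)\times\Omega;E)}$ on the dense subclass of functions smooth up to $\overline{\Omega}\times[0,T]$ and then pass to the closure.

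First I would localize: cover $\partial\Omega$ by finitely many boundary charts in which the boundary is flattened by a $C^2$-diffeomorphism, fix a subordinate partition of unity, and use that both $H^{s,2s}_p$ and the boundary space $H^{s-1/(2p),\,2s-1/p}_p$ are stable, with uniform norm control, under multiplication by the smooth cut-offs and under $C^2$-changes of the spatial variable (the time variable being left untouched, and $s\le1$ so that $C^2$ suffices); this is the standard pointwise-multiplier and diffeomorphism invariance of Bessel-potential and Slobodeckii spaces, cf. \cite{HT,MR}. Composing in addition with a bounded temporal extension operator and a bounded Seeley-type extension across $\{x_d=0\}$, I am reduced to $\Omega=\mathbb{R}^d_+=\{x_d>0\}$, $\partial\Omega=\mathbb{R}^{d-1}$, the full time line $\mathbb{R}$ in place of $(0,T)$, indeed to a function on all of $\mathbb{R}^{1+d}$ whose restriction to $\{x_d=0\}$ is to be estimated; I write $x=(x',x_d)$ and let $(\tau,\xi',\xi_d)$ be the Fourier variables dual to $(t,x',x_d)$.

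For $p=2$ with $E$ a Hilbert space I would use Plancherel. With $m(\tau,\xi')\coloneqq 1+|\tau|+|\xi'|^2$ one has $(1+|\tau|^2)^s+(1+|\xi|^2)^{2s}\asymp(1+|\tau|+|\xi|^2)^{2s}=(m+|\xi_d|^2)^{2s}$, so the interior norm is comparable to $\bigl(\int_{\mathbb{R}^{1+d}}(m+|\xi_d|^2)^{2s}|\widehat u|_E^2\bigr)^{1/2}$, while the boundary norm of $v$ is comparable to $\bigl(\int_{\mathbb{R}^{d}}m^{2s-1/2}|\widehat v|_E^2\bigr)^{1/2}$ (since the boundary weight has the form $m^{2(s-1/4)}=m^{2s-1/2}$). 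As $\widehat{tr\,u}(\tau,\xi')=c\int_{\mathbb{R}}\widehat u(\tau,\xi',\xi_d)\,d\xi_d$, Cauchy--Schwarz against the weight $(m+|\xi_d|^2)^{-2s}$ gives $|\widehat{tr\,u}(\tau,\xi')|_E^2\le C\bigl(\int_{\mathbb{R}}(m+\xi_d^2)^{-2s}d\xi_d\bigr)\int_{\mathbb{R}}(m+\xi_d^2)^{2s}|\widehat u|_E^2\,d\xi_d$, and the substitution $\xi_d=m^{1/2}\sigma$ shows $\int_{\mathbb{R}}(m+\xi_d^2)^{-2s}d\xi_d=m^{1/2-2s}\int_{\mathbb{R}}(1+\sigma^2)^{-2s}d\sigma$, the last integral being finite precisely because $2s>\tfrac12$. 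Multiplying by $m^{2s-1/2}$ and integrating in $(\tau,\xi')$ yields $\|tr|_{\partial\Omega}u\|^2_{H^{s-1/4,2s-1/2}_2}\le C\|u\|^2_{H^{s,2s}_2}$; the stated identifications $H^{s,2s}_2=H^{s,2s}$ and $H^{s-1/4,2s-1/2}_2=H^{s-1/4,2s-1/2}$ are just the conventions for $p=2$.

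For general $p$ and UMD $E$ orthogonality is lost and I would replace Plancherel by operator-valued multiplier theory: the UMD property and Weis' multiplier theorem (equivalently the bounded $\mathcal{H}^\infty$-calculus together with the Kalton--Weis sum theorem) identify $H^{s,2s}_p(\mathbb{R}\times\mathbb{R}^d_+;E)$ with the intersection of the domains of appropriate fractional powers of $1+\partial_t^2$ and $1-\Delta_x$ on $L^p(\mathbb{R};L^p(\mathbb{R}^d_+;E))$, and the trace onto $\{x_d=0\}$ is represented via the analytic semigroup generated by $-\bigl(1-\Delta_{x'}+(1+\partial_t^2)^{1/2}\bigr)^{1/2}$; the Dore--Venni / mixed-derivative trace estimates then deliver the boundary regularity with the parabolic scaling built in. This is exactly the content of \cite{MR}, so at this point I would simply invoke it. I expect the main obstacle to be precisely this last step: keeping the anisotropic (parabolic) bookkeeping sharp, so that the loss is exactly $\tfrac1{2p}$ of a temporal and $\tfrac1p$ of a spatial derivative -- as the exponent $1/2-2s$ in the $p=2$ computation makes transparent -- rather than a non-optimal value, and doing so in the vector-valued, non-Hilbertian setting where no elementary shortcut is available.
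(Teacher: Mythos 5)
This lemma is quoted in the paper directly from \cite{MR} without proof, so there is no internal argument to compare against; the relevant comparison is with the cited literature. Your proposal is essentially correct and, for the Hilbert case, more self-contained than the paper: the localization/flattening reduction to the half-space and the Plancherel computation with the anisotropic weight $m(\tau,\xi')=1+|\tau|+|\xi'|^2$ are accurate (the norm equivalences you use are valid because, under your standing assumption $2s>\tfrac1p$, the boundary smoothness index $s-\tfrac1{2p}$ is positive, so the sum of weights is comparable to the parabolic weight), and the exponent bookkeeping $\int_{\mathbb{R}}(m+\xi_d^2)^{-2s}\,d\xi_d=C\,m^{1/2-2s}$ produces exactly the stated loss of $\tfrac1{2p}$ temporal and $\tfrac1p$ spatial derivatives. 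Since every application in the paper (the estimate of $F_3$ and Lemma \ref{jme6}) uses only $p=2$ with $s=\tfrac12$ or $s=1$, your argument actually proves all that is needed. Two caveats: the lemma as stated omits the hypothesis $2s>\tfrac1p$, which you correctly identify as necessary for the trace to exist (and which holds in all uses); and for general $p$ and UMD-valued $E$ your final step simply re-invokes \cite{MR} (Weis' multiplier theorem, Dore--Venni/mixed-derivative theory), so that part is a citation rather than a proof --- but that is precisely the status the statement has in the paper itself.
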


\begin{lemma}\label{jme6}
If $f\in  BUC(0,T;H^{2}(\Omega))\cap H^{1}(0,T;H^{1}(\Omega))\cap H^{\frac{1}{2}}(0,T;H^{2}(\Omega))$, $g\in L^{\infty}(0,T;L^{2}(\Omega))\cap H^{\frac{1}{2}}(0,T;L^{2}(\Omega))$ and $f|_{t=0}=0$, we have
\begin{align*}
\|fg\|_{H^{\frac{1}{4},\frac{1}{2}}(S(T))}&\leq CT^{\frac{1}{8}}\|f\|_{C^{\frac{1}{8}}(0,T;B^{1}_{3,1}(\partial\Omega))}
\|g\|_{L^{2}(0,T;H^{1}(\Omega))}
\\&\quad+CT^{\frac{1}{6}}\|f\|_{H^{\frac{2}{3}}(0,T;L^{\infty}(\Omega))}
\|g\|_{L^{\infty}(0,T;L^{2}(\Omega))}
\\&\quad+CT^{\frac{1}{8}}\|f\|_{C^{\frac{1}{8}}(0,T;L^{\infty}(\Omega))}
\|g\|_{H^{\frac{1}{2}}(0,T;L^{2}(\Omega))}.
\end{align*}
\end{lemma}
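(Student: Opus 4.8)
The plan is to unravel the norm $\|\cdot\|_{H^{1/4,1/2}(S(T))}$ into its spatial and temporal components, estimate the product $fg$ in each component separately, and at every step exploit the condition $f|_{t=0}=0$ to trade a supremum-in-time norm of $f$ for a Hölder-in-time norm at the cost of a positive power of $T$. Since $H^{1/4,1/2}(S(T)) = H^{1/4}(0,T;L^2(\partial\Omega)) \cap L^2(0,T;H^{1/2}(\partial\Omega))$, it suffices to bound $\|fg\|_{L^2(0,T;H^{1/2}(\partial\Omega))}$ and $\|fg\|_{H^{1/4}(0,T;L^2(\partial\Omega))}$ separately; alternatively one may first pass to the bulk via $\|fg\|_{H^{1/4,1/2}(S(T))} \le C\|fg\|_{H^{1/2,1}(Q_T)}$ using the trace estimate of Lemma \ref{jme5} (with $p=2$, $s=\tfrac12$) and then work on $Q_T$. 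I would follow the first route, invoking trace inequalities such as $H^1(\Omega)\hookrightarrow H^{1/2}(\partial\Omega)\hookrightarrow L^2(\partial\Omega)$ and $H^2(\Omega)\hookrightarrow H^{3/2}(\partial\Omega)\hookrightarrow B^1_{3,1}(\partial\Omega)$ wherever a boundary norm must be controlled by a bulk norm.

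For the spatial part I would apply a pointwise-in-time multiplication estimate on the boundary (in the spirit of Lemma \ref{jme1} and \eqref{sobolev em-01}--\eqref{sobolev em-02}) to obtain $\|f(t)g(t)\|_{H^{1/2}(\partial\Omega)} \le C\|f(t)\|_{B^1_{3,1}(\partial\Omega)}\|g(t)\|_{H^{1/2}(\partial\Omega)} \le C\|f(t)\|_{B^1_{3,1}(\partial\Omega)}\|g(t)\|_{H^1(\Omega)}$, then use $f|_{t=0}=0$ to write $\|f\|_{L^\infty(0,T;B^1_{3,1}(\partial\Omega))} = \sup_t\|f(t)-f(0)\|_{B^1_{3,1}(\partial\Omega)} \le T^{1/8}\|f\|_{C^{1/8}(0,T;B^1_{3,1}(\partial\Omega))}$, and finally integrate in $t$ with Hölder's inequality. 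This yields the first term on the right-hand side.

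For the temporal part I would expand the Gagliardo seminorm $\|fg\|_{\dot H^{1/4}(0,T;L^2(\partial\Omega))}^2 = \int_0^T\int_0^T \|f(t)g(t)-f(\tau)g(\tau)\|_{L^2(\partial\Omega)}^2 |t-\tau|^{-3/2}\,\mathrm dt\,\mathrm d\tau$ and split the increment as $f(t)g(t)-f(\tau)g(\tau) = (f(t)-f(\tau))g(t) + f(\tau)(g(t)-g(\tau))$. In the first summand I would factor out $\|g\|_{L^\infty(0,T;L^2(\partial\Omega))}$, reducing matters to $\|f\|_{\dot H^{1/4}(0,T;L^\infty(\Omega))}$, and then invoke \eqref{HH} to get $\|f\|_{\dot H^{1/4}(0,T;L^\infty)} \le T^{5/12}\|f\|_{\dot H^{2/3}(0,T;L^\infty)} \le T^{1/6}\|f\|_{H^{2/3}(0,T;L^\infty(\Omega))}$ for $T\le 1$ — the second term. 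In the second summand I would factor out $\|f\|_{L^\infty(0,T;L^\infty(\Omega))} \le T^{1/8}\|f\|_{C^{1/8}(0,T;L^\infty(\Omega))}$ (again using $f|_{t=0}=0$) and be left with $\|g\|_{\dot H^{1/4}(0,T;L^2(\partial\Omega))}\le \|g\|_{H^{1/2}(0,T;L^2(\Omega))}$ — the third term. The low-order piece $\|fg\|_{L^2(0,T;L^2(\partial\Omega))} \le T^{1/2}\|f\|_{L^\infty_t L^\infty}\|g\|_{L^\infty_t L^2}$ is absorbed by these for $T\le 1$.

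The genuinely delicate point is the temporal fractional estimate: for each of the two increment terms one must choose which factor carries the $L^\infty$ norm and which carries the fractional (or Hölder) norm, so that every resulting factor matches exactly one of the three prescribed right-hand side norms, and one must keep track of the time interpolation (via \eqref{HC}, \eqref{HH}, \eqref{BCC} and the embeddings of Lemma \ref{jme2}) that renders the $C^{1/8}_t$ and $H^{2/3}_t$ norms of $f$ finite — these follow from the ambient regularity $f \in BUC(0,T;H^2)\cap H^1(0,T;H^1)\cap H^{1/2}(0,T;H^2)$. The hypothesis $f|_{t=0}=0$ is essential and recurring: without it no power of $T$ can be extracted and the estimate collapses to a bound with a $T$-independent constant.
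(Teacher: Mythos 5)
Your spatial estimate and your use of $f|_{t=0}=0$ to extract positive powers of $T$ are fine, but the route you chose for the temporal part has a genuine gap. Working directly with $\|fg\|_{H^{1/4}(0,T;L^{2}(\partial\Omega))}$ forces you to control boundary norms of $g$: in the first increment you factor out $\|g\|_{L^{\infty}(0,T;L^{2}(\partial\Omega))}$, and in the second you claim $\|g\|_{\dot{H}^{1/4}(0,T;L^{2}(\partial\Omega))}\le \|g\|_{H^{1/2}(0,T;L^{2}(\Omega))}$. Neither quantity is controlled by the hypotheses or by the norms on the right-hand side: pointwise in time $g$ has only $L^{2}(\Omega)$ spatial regularity, and there is no trace operator from $L^{2}(\Omega)$ to $L^{2}(\partial\Omega)$; temporal regularity of $t\mapsto g(t)\in L^{2}(\Omega)$ cannot create a spatial trace. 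Consequently the second and third terms of the asserted bound, which involve the bulk norms $\|g\|_{L^{\infty}(0,T;L^{2}(\Omega))}$ and $\|g\|_{H^{1/2}(0,T;L^{2}(\Omega))}$, are never actually reached by your argument. Trying to repair this by applying the anisotropic trace of Lemma \ref{jme5} to $g$ alone does not close the gap either: it additionally requires $g\in L^{2}(0,T;H^{1}(\Omega))$, yields a coupled bound that no longer matches the stated pairings, and still gives no $L^{\infty}$-in-time boundary control for the first increment.

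This is precisely why the paper follows the route you mention but discard: apply Lemma \ref{jme5} (with $p=2$, $s=\tfrac12$) to the \emph{product}, so that $\|fg\|_{H^{1/4,1/2}(S_{T})}\le C\|fg\|_{H^{1/2,1}(Q_{T})}$, and then estimate everything in the bulk. There, $\|fg\|_{L^{2}(0,T;H^{1}(\Omega))}$ is handled by Lemma \ref{jme1} (with $p=3$, $d=3$) exactly as in your spatial step, and the Gagliardo seminorm $\|fg\|_{\dot{H}^{1/2}(0,T;L^{2}(\Omega))}$ (kernel $|t-\tau|^{-2}$) is treated by the same increment splitting you propose, but now only bulk norms of $g$ appear: $\|g\|_{L^{\infty}(0,T;L^{2}(\Omega))}$ and $\|g\|_{\dot{H}^{1/2}(0,T;L^{2}(\Omega))}$, combined with $\|f\|_{\dot{H}^{1/2}(0,T;L^{\infty}(\Omega))}\le T^{1/6}\|f\|_{\dot{H}^{2/3}(0,T;L^{\infty}(\Omega))}$ from \eqref{HH} and $\|f\|_{L^{\infty}}\le T^{1/8}\|f\|_{C^{1/8}}$ from $f|_{t=0}=0$. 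With that single change --- trace the product first, then estimate in $Q_{T}$ --- your computation goes through and reproduces the stated inequality.
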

\begin{proof} According to lemma \ref{jme1} ($p=3,d=3$), Lemma \ref{jme2} and Lemma \ref{jme5}($p=2,s=\frac{1}{2}$), one has
\begin{align*}
\|fg\|_{H^{\frac{1}{4},\frac{1}{2}}(S(T))}&\leq C\|fg\|_{H^{\frac{1}{2},1}(Q(T))}
\\&\leq C\|fg\|_{L^{2}(0,T;H^{1}(\Omega))}+C\|fg\|_{\dot{H}^{\frac{1}{2}}(0,T;L^{2}(\Omega))}
\\&\leq C\|f\|_{L^{\infty}(0,T;B^{1}_{3,1}(\Omega))}
\|g\|_{L^{2}(0,T;H^{1}(\Omega))}
\\&\quad+\bigg(\int_0^{T}\int_0^{T}\frac{\|(f(t)-f(\tau))g(t)\|_{L^{2}(\Omega)}^{2}}{|t-\tau|^{2}}\mathrm dt \mathrm d\tau\bigg)^{\frac{1}{2}}
\\&\quad+\bigg(\int_0^{T}\int_0^{T}\frac{\|(g(t)-g(\tau))f(\tau)\|_{L^{2}(\Omega)}^{2}}{|t-\tau|^{2}}\mathrm dt \mathrm d\tau\bigg)^{\frac{1}{2}}
\\&\leq C\|f\|_{L^{\infty}(0,T;B^{1}_{3,1}(\Omega))}
\|g\|_{L^{2}(0,T;H^{1}(\Omega))}\\&\quad+\bigg(\int_0^{T}\int_0^{T}\frac{\|f(t)-f(\tau)\|_{L^{\infty}(\Omega)}^{2}\|g(t)\|_{L^{2}(\Omega)}^{2}}{|t-\tau|^{2}}\mathrm dt \mathrm d\tau\bigg)^{\frac{1}{2}}
\\&\quad+\bigg(\int_0^{T}\int_0^{T}\frac{\|g(t)-g(\tau)\|_{L^{2}(\Omega)}^{2}\|f(\tau)\|_{L^{\infty}(\Omega)}^{2}}{|t-\tau|^{2}}\mathrm dt \mathrm d\tau\bigg)^{\frac{1}{2}}
\\&\leq CT^{\frac{1}{8}}\|f\|_{C^{\frac{1}{8}}(0,T;B^{1}_{3,1}(\Omega))}
\|g\|_{L^{2}(0,T;H^{1}(\Omega))}
+CT^{\frac{1}{6}}\|f\|_{H^{\frac{2}{3}}(0,T;L^{\infty}(\Omega))}
\|g\|_{L^{\infty}(0,T;L^{2}(\Omega))}
\\&\quad+CT^{\frac{1}{8}}\|f\|_{C^{\frac{1}{8}}(0,T;L^{\infty}(\Omega))}
\|g\|_{H^{\frac{1}{2}}(0,T;L^{2}(\Omega))}.
\end{align*}
Thus the proof of this lemma is completed.
\end{proof}

\section*{Acknowledgments}
We would also like to
thank the anonymous referee for helpful comments leading to an improvement of this paper.
\subsection*{Funding}
M. Fei is supported partly by NSF of China under Grant No.~12271004, No.~12471222 and NSF of Anhui
Province of China under Grant No.~2308085J10.  D. Han is supported partly by NSF grant DMS-2310340. Y. Liu was partially supported by the startup funding from Nanjing Normal University, the NSF of Jiangsu Province  of China under Grant No.~BK20240572, the NSF of Jiangsu Higher Education Institutions of China under Grant No.~24KJB110020, and the China Postdoctoral Science Foundation under Grant No.~2025M773078.

\section*{Compliance with Ethical Standards}
	\subsection*{Date avability}
	Data sharing not applicable to this article as no datasets were generated during the current study.
	\subsection*{Conflict of interest}
	The authors declare that there are no conflicts of interest.


\begin{thebibliography}{99}
%






\bibitem {HA11}  H. Abels, \textit{Diffuse Interface Models for Two-Phase Flows of Viscous Incompressible
Fluids},  Lecture Notes 36/2007, Max Planck Institute for Mathematics in the Sciences, Leipzig, Germany, 2007.

\bibitem {HA12}  H. Abels, \textit{Existence of weak solutions for a diffuse interface model for viscous, incompressible fluids
with general densities},  Commun. Math. Phys. 289(2009), 45-73.

\bibitem {HA1} H. Abels, \textit{Strong well-posedness of a diffuse interface model for a viscous, quasi-incompressible two-phase flow},  SIAM J.
Math. Anal. 44(2012), 316-340.


\bibitem {HD}  H. Abels, D. Breit,  \textit{Weak solutions for a non-Newtonian diffuse interface model with different densities},  Nonlinearity 29(2016), 3426-3453.

\bibitem {HDG} H. Abels, D. Depner and H. Garcke,  \textit{Existence of weak solutions for a diffuse interface model for two-phase flows of incompressible fluids with different densities},  J. Math. Fluid Mech. 15(2013), 453-480.

\bibitem {HDH} H. Abels, D. Depner and H. Garcke, \textit{On an incompressible Navier-Stokes/Cahn-Hilliard system with degenerate mobility},  Ann.
Inst. Henri Poincar\'{e}, Anal. Non Lin\'{e}aire 30(2013), 1175-1190.

\bibitem {HHA}  H. Abels, H. Garcke and A. Giorgini, \textit{Global regularity and asymptotic stabilization for the incompressible Navier-Stokes-Cahn-Hilliard model with unmatched densities},  Math. Ann. 389(2024), 1267-1321.

\bibitem {HHG}  H. Abels, H. Garcke and G. Gr\"{u}n, \textit{Thermodynamically consistent, frame indifferent diffuse interface models for incompressible
two-phase flows with different densities},  Math. Models Methods Appl. Sci. 22(2012), 1150013.



\bibitem {HJ} H. Abels, J. Weber, \textit{Local well-posedness of a quasi-incompressible two-phase flow},
  J. Evol. Equ. 21(2021), 3477-3502.



\bibitem {ADGC}G. L. Aki, W. Dreyer, J. Giesselmann and C. Kraus, \textit{A quasi-incompressible diffuse
interface model with phase transition }, Math. Models Methods Appl. Sci. 24(2014), 827-861.

\bibitem {HA}  H. Amann, \textit{Linear and Quasilinear Parabolic Problems}, Volume 1: Abstract
Linear Theory, Birkh\"{a}user, Basel-Boston-Berlin, 1995.

\bibitem {JJ} J. Bergh, J. L\"{o}fstr\"{o}m, \textit{Interpolation Spaces}, Springer, Berlin-Heidelberg-New York, 1976.

 \bibitem {FB} F. Boyer, \textit{A theoretical and numerical model for the study of incompressible mixture
flows}, Comput. Fluids 31(2002), 41-68.





 \bibitem {HPC}  H. Ding, P. D. M. Spelt and C. Shu, \textit{Diffuse interface model for incompressible two-phase flows with large density ratios},  J. Comput. Phys. 226(2007), 2078-2095.





 \bibitem {SF1}  S. Frigeri, \textit{Global existence of weak solutions for a nonlocal model for two-phase flows of incompressible fluids with
unmatched densities}, Math. Models Methods Appl. Sci. 26(2016), 1955-1993.

\bibitem {SF2}  S. Frigeri, \textit{On a nonlocal Cahn-Hilliard/Navier-Stokes system with degenerate mobility and singular potential for incompressible fluids with different densities}, Ann. Inst. Henri Poincar\'{e}, Anal. Non Lin\'{e}aire 38(2021), 647-687.







\bibitem {GGG} {C.G. Gal, A. Giorgini, M. Grasselli and A. Poiatti, \textit{ Global well-posedness and convergence to equilibrium
for the Abels-Garcke-Grün model with nonlocal free energy},
J. Math. Pures Appl. 178(2023), 46-109.}

\bibitem {CMH}  C.G. Gal, M. Grasselli and H. Wu, \textit{Global weak solutions to a diffuse interface model for incompressible two-phase flows with
moving contact lines and different densities}, Arch. Ration. Mech. Anal. 234(2019), 1-56.












\bibitem {AG}   A. Giorgini, \textit{Well-posedness of the two-dimensional Abels-Garcke-Gr\"{u}n model for two-phase flows with unmatched densities}, Calc. Var. Partial Differ. Equ. 60(2021), Paper No.100, 40pp.


\bibitem {AG1}  A. Giorgini, \textit{Existence and stability of strong solutions to the Abels-Garcke-Gr\"{u}n model in three dimensions}, Interfaces Free Bound. 24(2022), 565-608.

\bibitem {AR}  A. Giorgini, R. Temam, \textit{Weak and strong solutions to the nonhomogeneous incompressible Navier-Stokes-Cahn-Hilliard
system},  J. Math. Pures Appl. 9(2020), 194-249.


\bibitem {GZ} H. Gomez, K. G. van der Zee, \textit{Computational phase-field modeling}, in Encyclopedia
of Computational Mechanics, 2nd ed., John Wiley $\&$ Sons, New York, 2017.

  

 
\bibitem {GCLLJ} Z. L. Guo,  Q. Chen, P. Lin, C. Liu and J. Lowengrub, \textit{Second order
approximation for a quasi-incompressible Navier-Stokes Cahn-Hilliard system of two-phase flows with variable density},
 J. Comput. Phys, 448(2022), Paper No. 110727, 17pp.


\bibitem {MDJ}  M. E. Gurtin, D. Polignone and J. Vi\~{n}als, \textit{Two-phase binary fluids and immiscible fluids described by an order parameter},
Math. Models Methods Appl. Sci. 6(1996), 815-831.







 \bibitem {BH}  B. Hanouzet, \textit{Applications bilin\'{e}aires compatibles avec un syst\`{e}me \`{a} coefficients
variables}, Continuit\'{e} dans les espaces de Besov. Comm. Partial Differential Equations, 10(1985), 433-465.

\bibitem {MJY}  M. Hieber, J. C. Robinson and Y. Shibata, \textit{Mathematical Analysis of the Navier-Stokes Equations, Lecture Notes in
Mathematics}, Lecture Notes in Mathematics, vol. 2254. Springer, Berlin, 2020.

\bibitem {PB}P. C. Hohenberg, B. I. Halperin, \textit{Theory of dynamic critical phenomena},
 Rev. Mod. Phys. 49(1977), 435-479.

\bibitem {JJ2}J. Johnsen, \textit{Pointwise multiplication of Besov and Triebel-Lizorkin spaces}, Math.
Nachr, 175(1995), 85-133.









%
%

\bibitem {JL} J. Lowengrub, L. Truskinovsky, \textit{Quasi-incompressible Cahn-Hilliard fluids and topological transitions}, Proc. R. Soc. Lond.
Ser. A, Math. Phys. Eng. Sci. 454(1998), 2617-2654.

\bibitem {MR}M. Meyries, R. Schnaubelt, \textit{ Interpolation, embeddings and traces of anisotropic fractional Sobolev spaces with temporal weights},
 J. Funct. Anal. 262(2012), 1200-1229.

\bibitem {JI}  J. Ne\v{c}as, I. Hlavacek, \textit{Mathematical theory of elastic and elastico-plastic
bodies: an introduction}, Studies in Applied Mechanics 3, Elsevier Scientific
Publishing Company, Amsterdam-Oxford-New York, 1981.


\bibitem {JG} J. Pr\"{u}ss, G. Simonett, \textit{Moving Interfaces and Quasilinear Parabolic Evolution Equations}, Monographs
in Mathematics, vol. 105. Birkh\"{a}user/Springer, [Cham], 2016.



\bibitem {SYW}J. Shen, X. F. Yang and Q. Wang, \textit{Mass and volume conservation in phase field models for binary fluids},
Commun. Comput. Phys. 13(2013), 1045-1065.


\bibitem {MGEK} M. Shokrpour Roudbari, G. \c{S}im\c{s}ek, E. H. van Brummelen and  K. G. van der Zee, \textit{ Diffuse-interface two-phase flow models with different densities: a new quasi-incompressible form and a linear energy-stable method},
Math. Models Methods Appl. Sci. 28(2018), 733-770.







 \bibitem {JS}J. Simon, \textit{ Sobolev, Besov and Nikolskii fractional spaces: imbeddings and comparisons for vector valued spaces on an interval},
 Ann. Mat. Pura Appl. 157(1990), 117-148.


\bibitem {HS} H. Sohr, \textit{The Navier-Stokes equations. An elementary functional analytic approach.[2013 reprint of the 2001 original]}, Birkh\"{a}user/Springer Basel AG, Basel, 2001.


















\bibitem {MKID} M. F. P. ten Eikelder, K. G. van der Zee, I. Akkerman and D. Schillinger, \textit{ A unified
	framework for Navier-Stokes Cahn-Hilliard models with non-matching densities},
Math. Models Methods Appl. Sci. 33(2023), 175-221.

\bibitem {HT}  H. Triebel, \textit{ Interpolation Theory, Function Spaces, Differential Operators}, North-Holland Publishing Company, Amsterdam, New York, Oxford, 1978.





















%








































%
%




%



\end{thebibliography}
\end{document}